\documentclass[a4paper,11pt]{amsart}
\usepackage{graphicx} 
\usepackage{amsfonts, amssymb, amsmath}
\usepackage{amsthm}
\usepackage{mathtools}
\usepackage{mathrsfs}
\usepackage[margin=1in]{geometry}
\usepackage{enumerate}
\usepackage[all]{xy}
\usepackage{color, xcolor}

\usepackage{tikz, tikz-cd}
\usetikzlibrary{3d}
\usepackage{caption}
\usepackage{subcaption}
\usepackage[all]{xy}

\newcommand{\PP}{\mathbb{P}}

\newcommand{\QQ}{\mathbb{Q}}

\newcommand{\cX}{\mathcal{X}}

\newcommand{\n}{\noindent}

\theoremstyle{definition}
\newtheorem*{Def}{Definition}
\newtheorem{Thm}{Theorem}[section]
\newtheorem{Prop}[Thm]{Proposition}
\newtheorem{Lem}[Thm]{Lemma}
\newtheorem{Rmk}[Thm]{Remark}
\newtheorem{Cor}[Thm]{Corollary}
\newtheorem{Ex}[Thm]{Example}

\usepackage{epic}

\title[Weighted Projective Smoothings to Fano Threefolds]
{Weighted Projective Spaces Admitting \(\QQ\)-Gorenstein \\Smoothings
to Fano Threefolds with Picard Number One}

\author{Jungkai Alfred Chen and Yongnam Lee}
\date{September 2026}

\address{Department of Mathematics, National Taiwan University, No. 1, Sec. 4, Roosevelt Rd., Taipei 10617, Taiwan}
\email{jkchen@ntu.edu.tw}

\address{Center for Complex Geometry, Institute for Basic Science (IBS), 55 Expo-ro, Yuseong-gu, Daejeon 34126, Korea}
\email{ynlee@ibs.re.kr}

\subjclass[2020]
{Primary 14J45, 
14B07  
\keywords{Weighted projective space, $\QQ$-Gorenstein smoothing, Hilbert polynomial}}

\begin{document}

\begin{abstract}
We study well-formed weighted projective threefolds
$X=\PP(w_0,w_1,w_2,w_3)$ that admit a
$\QQ$-Gorenstein smoothing to a smooth Fano threefold of Picard number one.  For del Pezzo threefolds $V_d$ and prime Fano threefolds $Y_g$, we derive three necessary numerical and local conditions: the anticanonical volume equation, an identity obtained from the linear term of the anticanonical Hilbert polynomial, and a global section condition for
smoothing the transversal $A$-singularities along coordinate curves.  A computer search using these conditions determines all numerical candidates, apart from the known infinite family for $V_5$, with $w_0+w_1+w_2+w_3\leq 5000$.  We construct $\QQ$-Gorenstein smoothings to $V_1, V_2, Y_6, Y_{10},$ and $Y_{12}$ from numerical candidates of weighted projective threefolds, and prove that $\PP(2,5,8,25)$, although a numerical candidate for $V_4$, is not $\QQ$-Gorenstein smoothable.  To identify the smooth fibres, we also give a vanishing-cycle criterion ensuring that Picard number
one is preserved under the smoothing.
\end{abstract}

\maketitle

\section{Introduction}

Throughout the paper we work over the complex numbers.  Weighted projective spaces occur naturally as toric degenerations and provide an explicit setting in which to study $\QQ$-Gorenstein deformation theory.  Our starting
point is the following classification problem:
Which well-formed weighted projective threefolds
$X=\PP(w_0,w_1,w_2,w_3)$ admit a \(\QQ\)-Gorenstein smoothing whose general fibre is a smooth
Fano threefold $Y$ with $\rho(Y)=1$?

This paper is a sequel to \cite{CL26}, where we considered the case $Y=\PP^3$.  In that case two infinite families are known.  The first
consists of the weighted projective spaces
\[\PP(a^2,b^2,c^2,abc),\qquad a^2+b^2+c^2=3abc,\]
which we call weighted projective spaces of \(\PP^2\)-type.  The second consists of
\[\PP(2a^2,b^2,c^2,4abc),\qquad 2a^2+b^2+c^2=4abc,\]
which are of $Q$-type.  DeVleming conjectured that these are the only weighted projective threefolds admitting a $\QQ$-Gorenstein smoothing to $\PP^3$ (cf. \cite{DeV22}); our previous paper provides evidence for this conjecture.

There are analogous infinite families for the smooth quadric $Q^3$ and the degree-five del Pezzo threefold $V_5$.  If
$a^2+b^2+2c^2=4abc$, then $\PP(a^2,b^2,2c^2)$ smooths to a quadric surface \cite{HP10}, and the relative-cone construction for the half-anticanonical polarization \cite[Section~3]{DLT} gives a smoothing
\[\PP(a^2,b^2,2c^2,2abc)\rightsquigarrow Q^3.\] 

Similarly, if $a^2+b^2+5c^2=5abc$,
then $\PP(a^2,b^2,5c^2)$ smooths to a toric del Pezzo surface of degree five \cite{HP10}, and the same cone construction \cite[Section~3]{DLT} gives
\[\PP(a^2,b^2,5c^2,5abc)\rightsquigarrow V_5.\]

Outside these cone constructions, relatively few weighted projective
threefolds are known to smooth to Fano threefolds of Picard number one.  This
leads to two natural questions.

\begin{enumerate}
\item[\textup{(Q1)}] Does every deformation family of smooth Fano
threefolds with Picard number one contain a member admitting a
$\QQ$-Gorenstein degeneration to a weighted projective threefold?

\item[\textup{(Q2)}] Apart from $\PP^3$, $Q^3$, and $V_5$, are there only finitely many weighted projective threefolds that admit such smoothings within each deformation family?
\end{enumerate}

Our main purpose is to develop effective numerical and local criteria for
these questions and to construct new examples relevant to \textup{(Q1)}.
Let
\[X=\PP(w_0,w_1,w_2,w_3),\qquad
S=\sum_{i=0}^3w_i,\qquad P=\prod_{i=0}^3w_i.\]
If $X$ admits a $\QQ$-Gorenstein smoothing to a smooth Fano
threefold $Y$, deformation invariance of the anticanonical volume gives
\[S^3=(-K_Y)^3P.                      \tag{V}\]
We call this the {\it volume equation}. Consequently,
\[S^3= \left\{\begin{array}{ll}
8dP, &\text{if }Y=V_d; \\
(2g-2)P, &\text{if }Y=Y_g. \end{array} \right.\]
Here $-K_{V_d}=2H$, $H^3=d$, whereas $-K_{Y_g}=H$ and
$H^3=2g-2$.

The volume equation is necessary but far from sufficient.  Put
$g_{ij}=\gcd(w_i,w_j)$, and let $w_k,w_\ell$ denote the complementary weights.  Assuming that every codimension-two singularity is of $A$-type, comparison of the linear coefficients of the anticanonical Hilbert polynomials gives the identities
\[\sum_{i<j}(w_i-w_j)^2
 +3\left(\frac{8}{d}-1\right)S^2
=8\sum_{i<j}(g_{ij}^2-1)w_kw_\ell                 \tag{$\mathrm{HA}_{V_d}$}\]
for $V_d$, and
\[\sum_{i<j}(w_i-w_j)^2
 +\left(\frac{96}{g-1}-3\right)S^2
=8\sum_{i<j}(g_{ij}^2-1)w_kw_\ell                 \tag{$\mathrm{HA}_{Y_g}$}\]
for $Y_g$.  We refer to these equations collectively as the
\emph{Hilbert identity}.

There is a further local-to-global restriction.  Suppose that
\[C_{ij}=\{x_k=x_\ell=0\}\]
has generic transversal singularity of type \(A_{g_{ij}-1}\).  We show that smoothing this curve requires $w_k+w_\ell=g_{ij}\delta$ for some positive
integer $\delta$, together with
\[g_{ij}\delta
 \in\left\langle\frac{w_i}{g_{ij}},\frac{w_j}{g_{ij}}\right\rangle_{\mathbb Z_{\geq0}}
\quad\text{or}\quad
(g_{ij}-1)\delta
 \in\left\langle\frac{w_i}{g_{ij}},\frac{w_j}{g_{ij}}\right\rangle_{\mathbb Z_{\geq0}}.
\tag{S}\]
This is a global section condition for the first-order smoothing parameter along $C_{ij}$, which we call the {\it smoothing section condition}.  It imposes further conditions other than  the volume equation and the Hilbert identity.

We call $X$ a \emph{numerical candidate} for $V_d$, respectively
$Y_g$, if it satisfies the appropriate volume equation (V) and Hilbert identity (HA), as well as the smoothing-section condition \textup{(S)}.  A computer search yields all numerical candidates with $S\leq5000$, with $V_5$ omitted because of the infinite family described above.  The result
is shown in Table~\ref{tab:candidates}.

\begin{table}[htbp]
\centering
\begin{tabular}{c|l||c|l}
Family & Numerical candidates & Family & Numerical candidates \\
\hline
\(V_1\) & \(\PP(5,12,18,25)\)
  & \(Y_2\) & none \\
\(V_2\) & \(\PP(3,4,8,9)\)
  & \(Y_3\) & none \\
 & \(\PP(3,14,18,49)\)
  & \(Y_4\) & none \\
\(V_3\) & \(\PP(2,3,3,4)\)
  & \(Y_5\) & \(\PP(5,12,18,25)\) \\
 & \(\PP(2,12,21,49)\)
  & \(Y_6\) & \(\PP(5,6,9,10)\) \\
\(V_4\) & \(\PP(2,5,8,25)\)
  & & \(\PP(6,45,170,289)\) \\
 & & \(Y_7\) & \(\PP(3,12,20,25)\) \\
 & & \(Y_8\) & none \\
 & & \(Y_9\) & \(\PP(3,4,8,9)\) \\
 & & & \(\PP(3,14,18,49)\) \\
 & & \(Y_{10}\) & \(\PP(2,18,25,45)\) \\
 & & \(Y_{12}\) & \(\PP(2,9,22,33)\) \\
 & & & \(\PP(8,25,55,352)\)
\end{tabular}
\caption{Numerical candidates with \(S\leq5000\), excluding \(V_5\).}
\label{tab:candidates}
\end{table}

DeVleming--Li--Torres \cite{DLT} proved that
$\PP(2,3,3,4)$ and $\PP(2,12,21,49)$ admit
$\QQ$-Gorenstein smoothings to the cubic threefold $V_3$.  For several of the remaining candidates we construct explicit or local-to-global smoothings.  Our main theorem is the following.

\begin{Thm}\label{thm:main}
The following \(\QQ\)-Gorenstein smoothings exist:
\begin{enumerate}
\item $\PP(5,12,18,25)\rightsquigarrow V_1$;
\item $\PP(3,4,8,9)\rightsquigarrow V_2$ and
      $\PP(3,14,18,49)\rightsquigarrow V_2$;
\item $\PP(5,6,9,10)\rightsquigarrow Y_6$;
\item $\PP(2,18,25,45)\rightsquigarrow Y_{10}$;
\item $\PP(2,9,22,33)\rightsquigarrow Y_{12}$.
\end{enumerate}
\end{Thm}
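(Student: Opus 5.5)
The plan is to realise each weighted projective space $X=\PP(w_0,w_1,w_2,w_3)$ as a degenerate member of a family whose general member is the target Fano threefold. We write that family as a (weighted) complete intersection or a linear section of a homogeneous variety, and let the coefficients degenerate. The model case is the cubic, where $\PP(2,3,3,4)\subset\PP(1,2,3,3,4)$ is cut out by a single equation of the form $x_0x_4=(\text{monomial in the remaining variables})$. We generalise this to each case of Theorem~\ref{thm:main}.

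For $V_1$ and $V_2$ we use the standard models: $V_1=X_6\subset\PP(1,1,1,2,3)$ and $V_2=X_4\subset\PP(1,1,1,1,2)$. The first step is to find a one-parameter family $\mathcal X\to\Delta$ in a bigger weighted projective space $\PP(1,\dots)$ whose fibre over $0$ is a toric hypersurface isomorphic to $X$. For $\PP(5,12,18,25)$ we take coordinates $y_i$ of weights $w_i$. Since $S=60$ and $-K_{V_1}=2H$, the polarisation $\mathcal O(30)$ on $X$ should specialise $H$. We then look for a quasi-smooth degeneration of the sextic in which $H$ corresponds to $\mathcal O_X(30)$. Concretely, the monomials of degree $30$, $60$, $90$ in the $y_i$ (namely $y_0^6$, $y_0y_3$, $y_1y_2$, etc.) give the generators of the graded ring $\bigoplus_m H^0(\mathcal O_X(30m))$. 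Writing its presentation exhibits $X$ as a toric degeneration of the anticanonical model of $V_1$. The analogous computation handles $\PP(3,4,8,9)$ and $\PP(3,14,18,49)$ for $V_2$.

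For $Y_6$, $Y_{10}$, $Y_{12}$ we use Mukai models: $Y_6$ as a $(2,2,3)$-type Gushel–Mukai-like complete intersection or a double cover, $Y_{10}$ as a linear section of $\mathrm{OGr}(5,10)$, and $Y_{12}$ as a linear section of $\mathrm{LGr}$ / the $G_2$-type model $\mathrm{Gr}(3,7)$ section. Here I would instead use the local-to-global approach suggested by condition (S). First compute the $\QQ$-Gorenstein local smoothings of each isolated cyclic quotient point and of the transversal $A_{g_{ij}-1}$ curves $C_{ij}$. Condition (S) provides a global section of the relevant $T^1_{\QQ G}$ sheaf along $C_{ij}$ that is nonvanishing generically, hence smoothing the curve singularity. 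Next show that the obstructions vanish, using $H^2(X,T_X)=0$ for toric $X$ and a local-to-global spectral sequence, i.e.\ $H^1$ of the sheaf $\mathcal T^1_{\QQ G}$ together with unobstructedness of local $\QQ$-Gorenstein deformations (T-singularities / $A$-type). This yields a global $\QQ$-Gorenstein smoothing of $X$. Finally, identify the general fibre $Y$. It is Fano with $(-K_Y)^3=(-K_X)^3=S^3/P$, and the Hilbert identity guarantees the correct $h^0(-K_Y)$, hence the genus. Fano index is read off from divisibility of $-K_X$ in $\mathrm{Cl}(X)$ that persists, and $\rho(Y)=1$ follows from the vanishing-cycle criterion mentioned in the abstract, applied to the Milnor fibres of the singular points. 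Together with Iskovskikh's classification this pins down $Y=Y_6,Y_{10},Y_{12}$, and similarly $V_1,V_2$.

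The main obstacle is the global unobstructedness together with the requirement that the local smoothings at the isolated non-$T$ points be $\QQ$-Gorenstein (of T-type). Isolated quotient singularities like $\frac1{25}(\dots)$ must be checked to be T-singularities or to admit $\QQ$-Gorenstein smoothings at all. I would first try to avoid this by finding explicit equations (a toric degeneration of the Mukai model via Gelfand–Tsetlin or Plücker degenerations), and fall back on local-to-global only where explicit equations fail.
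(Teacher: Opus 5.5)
\textbf{Verdict: there is a genuine gap.} Your explicit route is right where it applies, but the argument breaks down for $\PP(3,14,18,49)$ and for the three index-one cases.

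\textbf{Where the explicit route works, and where it fails.} For $\PP(5,12,18,25)$ your route is the paper's argument, with one correction. $X$ is a codimension-two complete intersection, not a hypersurface: $X=\{y_0^5=zu,\ y_1^6=y_2v\}\subset\PP(1,1,1,2,3,5)$. The paper deforms this by $y_0^5-zu=tv$, $y_1^6-y_2v=tg_6$ and then eliminates $v$.

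The same trick also handles $\PP(3,4,8,9)$, more simply than the paper's local-to-global proof. Put $y_0=x_0^4$, $y_2=x_0x_3$, $w=x_3^4$, $y_1=x_1^3$, $y_3=x_1x_2$, $z=x_2^3$. Then $X=\{y_2^4=y_0w,\ y_3^3=y_1z\}\subset\PP(1,1,1,1,2,3)$. The family $y_3^3-y_1z=tw$, $y_2^4-y_0w=tg_4$ gives a general quartic in $\PP(1,1,1,1,2)$, so Fano index $2$ comes for free. This works because in both cases $\deg H$ equals the product of the gcds of the two disjoint edges of the gcd graph, so the Veronese ring splits into two $A$-type cones.

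It is not analogous for $\PP(3,14,18,49)$, whose gcd graph is the chain $3$--$18$--$14$--$49$. The four sections $x_0^{14},x_1^3,x_0^8x_2,x_0^2x_2^2$ of $\mathcal O_X(42)$ already satisfy $(x_0^8x_2)^2=x_0^{14}\cdot x_0^2x_2^2$. Two new generators, $x_0^7x_1x_3$ and $x_0x_1x_2x_3$, are needed in degree two. So there is no quartic-in-$\PP(1,1,1,1,2)$ presentation to deform, and you must fall back on local-to-global. Since $(-K)^3=16$ is also the volume of $Y_9$, you then have to prove that $\mathcal O_X(42)$ extends to a line bundle on the family. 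The paper does this on the canonical covering stack using $H^1(\mathcal O_X)=H^2(\mathcal O_X)=0$. Your phrase ``divisibility that persists'' is exactly the statement that needs proof.

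\textbf{The local-to-global step omits the codimension-three analysis, which is the heart of the matter.} No coordinate point is an isolated singularity; each lies on an $A$-curve. Isolated three-dimensional quotient singularities are rigid \cite{Sch} anyway, so ``T-type'' is not the relevant test. What must be checked is:
\begin{enumerate}
\item each endpoint has a local $\QQ$-Gorenstein smoothing compatible with the chosen sections along the curves;
\item where two $A$-curves meet, that local smoothing is compatible with both branches.
\end{enumerate}
In $\PP(3,14,18,49)$, $\PP(5,6,9,10)$, $\PP(2,18,25,45)$ and $\PP(2,9,22,33)$ the gcd graph is a chain, so there are two such junctions. Condition (S), $H^2(X,T_X)=0$ and $H^1(\mathcal T^1_{\mathrm{QG}})=0$ cannot suffice. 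Indeed $\PP(2,5,8,25)$ satisfies all three, yet it is not smoothable by Corollary~\ref{non-smoothing} applied at $P_2\in C_{02}$.

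\textbf{The missing idea is to use your own ``model case'' as a partial smoothing.}
\begin{enumerate}
\item Write $X\simeq\{uw=v^g\}$ using $\mu_g$-invariants of the two transverse coordinates of one $A_{g-1}$-curve.
\item Deform to $uw=v^g+t\,s$, with $s$ a form on that curve. This gives the quasismooth hypersurfaces $X_{15}\subset\PP(1,2,3,6,9)$, $X_{27}\subset\PP(2,2,3,5,25)$ and $X_{11}\subset\PP(1,2,2,3,9)$.
\item For $Y_{10}$ and $Y_{12}$ this kills the middle curve and leaves two disjoint $A$-curves.
\item For $Y_6$ the surviving junction is the cone $\frac16(1,2,3)$, which is handled by the Brown--Reid--Stevens Tom/Jerry smoothings. $\PP(3,14,18,49)$ likewise needs a Jerry smoothing at $\frac1{18}(3,14,13)$.
\item Only then does local-to-global on the hypersurface close the argument, with $H^2(T_Z)=0$ coming from the Euler and normal sequences.
\end{enumerate}

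\textbf{The Picard-number step.} $\rho(Y)=1$ is not a statement about Milnor fibres of points. $\mathbb H^2(R\phi_\pi\QQ)$ consists of global sections of the $A_{r-1}$ vanishing-cycle local systems along the curves. You need the chosen sections to have simple zeros, so that the monodromy on the roots of $uv=p_q(z)$ is transitive (Lemma~\ref{monodromy}). You also need endpoint checks, such as the free $\mu_5$-action on the Milnor fibre at $\frac1{25}(3,2,5)$.

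Incidentally, linear sections of $\mathrm{OGr}(5,10)$ give $Y_7$, not $Y_{10}$.
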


The Picard number of a smooth fibre is not determined by the weighted projective special fibre alone: a single weighted projective threefold can have different smoothing components whose general fibres have different Picard numbers.  We therefore use nearby and vanishing cycles to give a
criterion for $\rho(Y)=1$.  Roughly, if the degree-two vanishing-cycle group vanishes, then specialization induces a surjection
\[H^2(X,\QQ)\longrightarrow H^2(Y,\QQ).\]
Since a weighted projective threefold has one-dimensional rational second cohomology, this forces $\rho(Y)=1$.  In the examples of
Theorem~\ref{thm:main}, the required vanishing is established by studying the monodromy of the $A$-type vanishing-cycle local systems and their behaviour at the endpoints of the singular curves.

We also obtain two complementary obstruction results.  First,
$\PP(2,5,8,25)$, although a numerical candidate for $V_4$, admits no $\QQ$-Gorenstein smoothing; the obstruction comes from the incompatibility of the curve smoothing with an endpoint singularity.  Second, for a natural partial smoothing associated with the $Y_7$-candidate $\PP(3,12,20,25)$, we compute a six-dimensional group $H^1(T^1_{\mathrm{QG}})$.  The latter computation exhibits a globalization difficulty, although it does not by itself prove that the original weighted projective space is non-smoothable.

Our problem is more restrictive than the existence of an arbitrary toric degeneration.  Ilten--Lewis--Przyjalkowski proved that every smooth Fano threefold of Picard rank one admits a flat degeneration to a possibly singular toric Fano threefold \cite[Theorem~3.1]{ILP}.  Their special fibres,
however, need not be weighted projective spaces, and the total spaces of the degenerations need not be $\QQ$-Gorenstein.  Galkin's classification of small toric degenerations gives a further indication of how restrictive
our problem is \cite{Gal}.  Thus neither result directly supplies the weighted-projective $\QQ$-Gorenstein degenerations considered here.

The paper is organized as follows.  In Section~2 we derive the Hilbert identity and the smoothing-section condition, study compatibility at the endpoints of singular curves, and prove the non-smoothability of $\PP(2,5,8,25)$.  Section~3 develops the nearby- and vanishing-cycle
criterion used to control the Picard number of a smooth fibre.  In Section~4
we construct the smoothings in Theorem~\ref{thm:main}, identify their general
fibres, and discuss the remaining obstruction for
$\PP(3,12,20,25)$.

\subsection*{Acknowledgements}
J. Chen is partially supported by National Center for Theoretical Sciences and National Science and Technology Council (115-2811-M-002-126) of Taiwan. Y. Lee is supported by the Institute for Basic Science
(IBS-R032-D1). Y. Lee would like to thank the Department of Mathematics and NCTS in National Taiwan University for the hospitality during his visit in 2026. The authors would like to thank Kristin DeVleming for some useful discussion and comments.

\section{Hilbert  identity and smoothing section condition}

Let $X=\PP(w_0,w_1,w_2,w_3)$ be a well-formed weighted projective space. Since it is well-formed, we have $\operatorname{gcd}(w_i, w_j, w_k)=1$ for distinct $\{i,j,k\} \subset \{0,1,2,3\}$. Details on weighted projective spaces can be found in \cite{BR86, Dol82, IF00}.
In this paper, we will always assume well-formed. We assume that $X$ is a $\QQ$-Gorenstein degeneration of a Fano threefold,  that is, there exists a flat projective  morphism $\pi: \cX\to\Delta$ over a DVR with the central fiber $X$ such that $\pi^{-1}(t)=X_t$ is a smooth Fano threefold for $t\ne 0$ and $K_{\cX/\Delta}$ is $\QQ$-Cartier. 

Let $S=w_0+w_1+w_2+w_3,\ P=w_0w_1w_2w_3.$
Then $(-K_X)^3=\frac{S^3}{P}.$ If $X$ admits a $\QQ$-Gorenstein smoothing to a smooth Fano threefold $Y$, constancy of anticanonical volume gives
$S^3=(-K_Y)^3P.$ For a smooth Fano threefold $Y$ with $\rho(Y)=1$, write $-K_Y=rH,$ where $H$ generates $\operatorname{Pic}(Y)$ and $r$ is the Fano index. Then
$(-K_Y)^3=r^3H^3,$ so the general volume equation is $S^3=r^3H^3\,P$.

{\it Index 4:} The only possibility is $Y=\PP^3$ with $(-K_Y)^3=64$. Thus $S^3=64P.$

{\it Index 3:} The only possibility is a smooth quadric $Q^3\subset\PP^4$ with $-K_{Q^3}=3H,\ H^3=2,$ so $S^3=54P$.

{\it Index 2:} These are the del Pezzo threefolds $V_d$, where
\[-K_{V_d}=2H,\qquad H^3=d,\qquad 1\le d\le5.\]
Therefore $(-K_{V_d})^3=8d$ and $S^3=8dP$ with $ 1 \le d \le 5$.

{\it Index 1:} These are prime Fano threefolds of genus
$g\in\{2,3,4,5,6,7,8,9,10,12\},$ where
\[-K_Y=H,\qquad H^3=2g-2.\]
Thus $S^3=(2g-2)P$.

Let $S=\sum_{i=0}^3w_i$ and $L=\operatorname{lcm}(w_0,w_1,w_2,w_3).$ Let
\[r=\frac{\operatorname{lcm}(w_0,w_1,w_2,w_3)}
        {\gcd(\operatorname{lcm}(w_0,w_1,w_2,w_3),S)}.\]
Then $r$ is the Cartier index of $K_X$, and $rK_X$ is Cartier exactly when
\[\mathcal O_X(-rS)\ \text{is Cartier}
\quad \text{if and only if} \quad
L\mid rS.\]

Let $H_X(m)=\chi\!\left(X,\mathcal O_X(-mrK_X)\right)$ be the Hilbert polynomial associated with the Cartier line bundle $\mathcal O_X(-rK_X)$.
Then $H_X(m)$ satisfies the following Hilbert coefficient identity. This constancy of the coefficients of $H_X(m)$ coming from the anticanonical $\QQ$-line bundle gives the fundamental equation.

\[[t^{mrS}]\frac{1}{(1-t^{w_0})(1-t^{w_1})(1-t^{w_2})(1-t^{w_3})}
=h^0\!\left(Y,\mathcal O_Y(-mrK_Y)\right),\]
when \(\PP(w_0,w_1,w_2,w_3)\) admits a \(\QQ\)-Gorenstein smoothing to a Fano 3-fold \(Y\).
Here the notation $[t^N]H(t)$ means “take the coefficient of $t^N$ in the power-series expansion of $H(t)$.”

\begin{Rmk}
The cubic and quadratic coefficients of the anticanonical Hilbert polynomial are equivalent.
For a threefold, Riemann–Roch begins
\[\chi(X,\mathcal O_X(mD))=
\frac{D^3}{6}m^3
-\frac{K_XD^2}{4}m^2+\cdots.\]
Taking $D=-rK_X$ makes both coefficients multiples of $(-K_X)^3$:
\[D^3=r^3(-K_X)^3,\qquad
-K_XD^2=r^2(-K_X)^3.\]
Therefore both coefficients contain the same numerical invariant, the anticanonical volume.
\end{Rmk}

For every singular edge $(w_i,w_j)$, put $g_{ij}=\gcd(w_i,w_j)$,with complementary weights $(w_k,w_\ell)$. The next lemma shows when $X=\mathbb{P}(w_0, w_1, w_2, w_3)$ admits a smoothing to a Fano threefold $Y$.
It is proved for $Y=\PP^3$ in \cite[Lemma 2.2]{CL26}. Proof is basically the same for any Fano threefold $Y$.

\begin{Lem}\label{smoothing} Suppose $X$ admits a $\QQ$-Gorenstein smoothing to a smooth threefold.
\begin{enumerate}[(i)]
\item If $w_i >1$, then $g_{ij} >1$ for some $j\ne i$.
\item If $g_{ij}=d >1$ and $d$ is square free then $X$ has only $A_{d-1}$ singularities along the curve $C_{ij}=\{x_k=x_\ell=0\}\subset X$.
\item Suppose the generic transverse singularity is $A_{g_{ij}-1}$ along the curve $C_{ij}=\{x_k=x_\ell=0\}\subset X$. Then 
$w_k+w_\ell=g_{ij}\delta$ for some positive integer $\delta$, and 
\[g_{ij}\delta\in\langle \frac{w_i}{g_{ij}},\ \frac{w_j}{g_{ij}}\rangle_{\mathbb Z_{\ge0}}
\quad\text{or}\quad
(g_{ij}-1)\delta\in\langle \frac{w_i}{g_{ij}}, \ \frac{w_j}{g_{ij}}\rangle_{\mathbb Z_{\ge0}}.\]

In particular, if $(g_{ij}=2$, then $w_k+w_\ell=2\delta\in\langle \frac{w_i}{g},\ \frac{w_j}{g_{ij}}\rangle_{\mathbb Z_{\ge0}}$.
\end{enumerate}
\end{Lem}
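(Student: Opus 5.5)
The plan is to argue locally along each torus-invariant stratum and then pass to global sections on the singular curves. Because $X$ is $\QQ$-Gorenstein smoothable to a smooth threefold, every germ $(X,p)$ has a $\QQ$-Gorenstein smoothing. The plan therefore imports the known classification of $\QQ$-Gorenstein smoothable surface and threefold quotient singularities, applied transversally and at the torus-fixed points.

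For (i), suppose $w_i>1$ and $\gcd(w_i,w_j)=1$ for all $j\ne i$. Then the coordinate point $P_i$ is an isolated cyclic quotient singularity $\frac{1}{w_i}(w_j,w_k,w_\ell)$ of dimension three. By Schlessinger's rigidity theorem, isolated quotient singularities of dimension $\ge 3$ are rigid. Hence $P_i$ survives in every fibre, contradicting smoothness of $X_t$.

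For (ii), a general point of $C_{ij}$ has transversal singularity $\frac{1}{d}(w_k,w_\ell)$, a surface cyclic quotient singularity. Along the smoothing, the transversal slice gives a $\QQ$-Gorenstein smoothing of this surface germ. By Koll\'ar--Shepherd-Barron, it must therefore be a $T$-singularity: either $A_{d-1}$, i.e. $w_k+w_\ell\equiv 0 \pmod d$, or of type $\frac{1}{dn^2}(1,dna-1)$. A group order $dn^2$ with $n>1$ is impossible when $d$ is square-free. Hence the singularity is $A_{d-1}$.

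For (iii), write $g=g_{ij}$. The $A_{g-1}$ condition $w_k+w_\ell\equiv 0\pmod g$ gives $w_k+w_\ell=g\delta$. Near a general point of $C_{ij}$, $X$ is locally $(uv=z^{g})\times \mathbb{A}^1$ with $u=x_k^{g}$-type invariants. A $\QQ$-Gorenstein deformation smoothing the curve has versal form $uv=z^{g}+\sum_{m} a_m(s)z^m$. Its first-order part is a section of $T^1_{\mathrm{QG}}$ restricted to $C_{ij}\cong \PP(w_i/g,w_j/g)$, which is a line bundle $\mathcal O(e)$. The key step is to compute $e$ by tracking weights. The term $uv$ has weight $g(w_k+w_\ell)/g\cdot$(normalized), corresponding to $x_kx_\ell$-type monomials of degree $w_k+w_\ell=g\delta$. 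The deformation term $z^m$ corresponds to $(x_k^{a}x_\ell^{b})$ with degree $m\delta$. So the coefficient of $z^{m}$ is a section of weight $(g-m)\delta$ in the variables $x_i,x_j$, rescaled by $g$ because of the well-formed quotient along the curve. The two relevant terms are $m=0$ (the constant term, which smooths the curve generically) and $m=1$ (the $\QQ$-Gorenstein direction). They give required degrees $g\delta$ or $(g-1)\delta$ on $\PP(w_i/g,w_j/g)$. A smoothing along the whole curve requires at least one nonzero such section, and $H^0(\PP(a,b),\mathcal O(e))\ne 0$ iff $e\in\langle a,b\rangle_{\mathbb Z_{\ge0}}$. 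Finally, $g=2$ forces $m=0$ or $1$ with the relevant degree $2\delta$, or $\delta$; one checks via the $\QQ$-Gorenstein constraint (only the $m=0$ term is admissible for $A_1$) that the condition is $2\delta\in\langle\cdot\rangle$.

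The main obstacle is the careful identification of the degree of the line bundle carrying the smoothing parameter on $C_{ij}$: the weights of $u,v,z$ relative to the $\mathbb{C}^*$-action, and the rescaling by $g$ when passing to $\PP(w_i/g,w_j/g)$. Here I would follow the computation of \cite[Lemma 2.2]{CL26} verbatim, since only the local structure near $C_{ij}$, not $Y$, enters.
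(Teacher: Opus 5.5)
Your overall strategy is the one the paper relies on; the paper defers the proof to \cite[Lemma~2.2]{CL26}.
\begin{itemize}
\item For (i) you use Schlessinger rigidity of the isolated point $\frac1{w_i}(w_j,w_k,w_\ell)$.
\item For (ii) you restrict the smoothing to a general transverse relative Cartier surface $\mathcal S$, whose relative canonical class $(K_{\mathcal X/\Delta}+\mathcal S)|_{\mathcal S}$ is $\QQ$-Cartier, and then use the classification $\frac1{en^2}(1,ena-1)$ of cyclic $T$-singularities. This is exactly the lemma following Lemma~\ref{linear}.
\item For (iii) you use the weight count behind $\mathcal T^1_{\mathrm{QG}}|_C\simeq\bigoplus_m(\det N_{C/X})^{\otimes(g-m)}$. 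Concretely, $X\cong\{uv=z^g\}\subset\PP(w_i/g,w_j/g,w_k,w_\ell,\delta)$ with $u=x_k^g$, $v=x_\ell^g$, $z=x_kx_\ell$. The relation has degree $g\delta$, so the coefficient $s_m$ of $z^m$ is a form of degree $(g-m)\delta$ in $x_i,x_j$.
\end{itemize}
Your normalization bookkeeping is muddled in places, but it arrives at these numbers.

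The gap is in the step that actually produces (S): you give no valid reason why one of $s_0,s_1$ must be a nonzero section.
\begin{itemize}
\item Your parenthetical labels are wrong. The case $m=1$ is not ``the $\QQ$-Gorenstein direction'': along a Gorenstein $A_{g-1}$-curve every $s_m$ with $0\le m\le g-2$ is a $\QQ$-Gorenstein direction. Moreover, if $s_0$ were needed for smoothing and $s_1$ for $\QQ$-Gorensteinness, you would get a conjunction, not the disjunction in the statement.
\item The actual argument is local. Suppose $H^0(\mathcal O(g\delta))=H^0(\mathcal O((g-1)\delta))=0$ on $\PP(w_i/g,w_j/g)$. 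Then $s_0\equiv s_1\equiv 0$, so near $C_{ij}$ every fibre has the form $uv=z^g+\sum_{m\ge2}s_m(q)z^m$.
\item For such a fibre, the value and all partial derivatives vanish along $u=v=z=0$, including $\partial_q=-\sum_{m\ge2}s_m'(q)z^m$. Hence $C_{ij}$ stays in the singular locus of every fibre.
\item Conversely, $s_1\neq0$ alone already smooths the general point, since $\partial_z=s_1$ there. This is why the condition is a disjunction and why the terms with $m\ge2$ play no role.
\end{itemize}

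The case $g=2$ likewise needs no ``$\QQ$-Gorenstein constraint''. The miniversal family of $A_1$ is simply $uv=z^2+s_0$, since a linear $z$-term can be completed away. In any case $\delta\in\langle w_i/g,w_j/g\rangle_{\mathbb Z_{\ge0}}$ implies $2\delta\in\langle w_i/g,w_j/g\rangle_{\mathbb Z_{\ge0}}$, so the disjunction collapses to the stated condition.
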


\begin{Def}
We say that every codimension-two singularity is $A$-type in $X=\PP(w_0,w_1,w_2,w_3)$ if it satisfies the following:
Whenever 
$g_{ij} > 1,$ $X$ has $A_{g_{ij}-1}$-singularities along the corresponding coordinate curve $C_{ij}=\{x_k=x_\ell=0\}\subset X, \ \{k,\ell\}=\{0,1,2,3\}\setminus\{i,j\}$.
\end{Def}

This is equivalent to $w_k^{-1}w_{\ell} \equiv-1\pmod {g_{ij}}.$ Thus every codimension-two singularity is $A$-type means:
$g_{ij}\mid w_k+w_\ell
\quad\text{for every pair }i,j\text{ with }g_{ij}>1$.

\begin{Lem}\label{linear}
Suppose that $X=\PP(w_0,w_1,w_2,w_3)$ admits a $\QQ$-Gorenstein smoothing to a smooth threefold and every codimension-two singularity in $X$ is $A$-type. Then constancy of the anticanonical Hilbert polynomial gives
\[\sum_{i<j}(w_i-w_j)^2+3\left(\frac8d-1\right)S^2
=8\sum_{i<j}(g_{ij}^2-1)w_kw_\ell.
\tag{$\mathrm{HA}_{V_d}$}\]
for del Pezzo threefold $V_d$.

 \[\sum_{i<j}(w_i-w_j)^2+
\left(\frac{96}{g-1}-3\right)S^2
=8\sum_{i<j}(g_{ij}^2-1)w_kw_\ell.
\tag{$\mathrm{HA}_{Y_g}$}\]
for prime Fano threefold with genus $g$.
\end{Lem}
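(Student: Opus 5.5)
The plan is to compare the coefficients of $m$ in the two Hilbert polynomials
\[H_X(m)=[t^{mrS}]\prod_{i=0}^3(1-t^{w_i})^{-1},\qquad H_Y(m)=\chi(Y,\mathcal O_Y(-mrK_Y)).\]
The first is computed by partial fractions and the second by Riemann--Roch. By the fundamental equation stated before the Remark, the two polynomials agree. I will then simplify using the volume equation (V).

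\textbf{The $Y$ side.} For a smooth Fano threefold with $V=(-K_Y)^3$, Riemann--Roch together with $\chi(\mathcal O_Y)=1$ and $-K_Y\cdot c_2(Y)=24$ gives
\[\chi(-nK_Y)=\tfrac{V}{6}n^3+\tfrac V4 n^2+\left(\tfrac V{12}+2\right)n+1.\]
Putting $n=mr$, the linear coefficient in $m$ is $r\bigl(V/12+2\bigr)$, with $V=8d$ or $V=2g-2$.

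\textbf{The $X$ side.} Write $N=mrS$ and decompose $F(t)=\prod(1-t^{w_i})^{-1}$ into partial fractions over the roots of unity $\zeta$, sorted by pole order. Well-formedness makes the pole orders simple to list:
\begin{itemize}
\item $\zeta=1$ is a pole of order $4$.
\item A $\zeta\neq1$ whose order divides some $g_{ij}>1$ is a pole of order exactly $2$, because no three weights share a common factor.
\item Every other $\zeta$ is a pole of order at most $1$.
\end{itemize}
The poles of order $\le1$ contribute only quasi-constant terms, so they do not affect the linear coefficient.

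The pole at $t=1$ gives the usual Todd-class expansion
\[\frac1P\Bigl(\frac{N^3}{6}+\frac{SN^2}{4}+\frac{S^2+\sigma_2}{12}N+\cdots\Bigr),\qquad \sigma_2=\sum_{i<j}w_iw_j .\]
I will double-check this coefficient by expanding $\prod_i \frac{w_ix}{1-e^{-w_ix}}$.

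\textbf{The order-$2$ poles, edge by edge.} Fix an edge with $g=g_{ij}>1$ and write $a=w_k$, $b=w_\ell$. The leading Laurent term of $F$ at a primitive $\zeta$ with $\zeta^g=1$ gives, in the coefficient of $t^N$, the linear contribution
\[\frac{N}{w_iw_j}\,\frac{\zeta^{-N}}{(1-\zeta^{a})(1-\zeta^{b})}.\]
The $A$-type hypothesis is used twice here.
\begin{itemize}
\item It gives $g\mid a+b$. Since also $g\mid w_i,w_j$, we get $g\mid S$, hence $\zeta^{-N}=1$.
\item It gives $\zeta^{b}=\zeta^{-a}$, so each term becomes $|1-\zeta^a|^{-2}$.
\end{itemize}
Since $\gcd(a,g)=1$, summing over all $\zeta\neq1$ with $\zeta^g=1$ yields the classical identity
\[\sum_{\zeta^g=1,\ \zeta\ne1}\frac1{|1-\zeta|^2}=\frac{g^2-1}{12}.\]
The edge therefore contributes $\dfrac{N(g^2-1)}{12\,w_iw_j}=\dfrac{N(g^2-1)w_kw_\ell}{12P}$ to the linear term. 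I will also check that the subleading Laurent terms only add constants in $N$, using $\zeta^N=1$ again.

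\textbf{Comparing and simplifying.} Equating linear coefficients and dividing by $r$ gives
\[\frac{S(S^2+\sigma_2)}{12P}+\sum_{i<j}\frac{(g_{ij}^2-1)\,S\,w_kw_\ell}{12P}=\frac{V}{12}+2 .\]
I then substitute $P=S^3/V$ from (V) and clear denominators, which gives
\[V(S^2+\sigma_2)+V\sum_{i<j}(g_{ij}^2-1)w_kw_\ell=(V+24)S^2 .\]
Finally, use $\sum_{i<j}(w_i-w_j)^2=3\sum w_i^2-2\sigma_2$ and $\sum w_i^2=S^2-2\sigma_2$, which together give $\sigma_2=\bigl(3S^2-\sum_{i<j}(w_i-w_j)^2\bigr)/8$, and multiply by $8/V$. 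For $V=8d$ this produces exactly $(\mathrm{HA}_{V_d})$, and for $V=2g-2$ it produces $(\mathrm{HA}_{Y_g})$.

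\textbf{Main obstacle.} The delicate step is the order-$2$ pole computation: I must confirm that the leading Laurent coefficient is exactly as written, including the factor $1/(w_iw_j)$, and that the sign conventions together with $g\mid S$ really remove all periodic dependence in the linear term. I would first test the final identity on $\PP(1,1,2,2)$-type examples and on $\PP(2,3,3,4)\rightsquigarrow V_3$ to pin down these constants.
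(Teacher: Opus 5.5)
Your proposal is correct and follows essentially the same route as the paper: you compare the linear coefficient of $[t^{mrS}]\prod_i(1-t^{w_i})^{-1}$ (the pole at $t=1$ plus the order-two poles attached to each $A$-type edge, evaluated via $\sum_{q=1}^{g-1}|1-\zeta^q|^{-2}=(g^2-1)/12$) with the Riemann--Roch coefficient $V/12+2$, then substitute $P=S^3/V$ and $\sum_{i<j}(w_i-w_j)^2=3S^2-8\sigma_2$. Your extra bookkeeping fills in details the paper leaves implicit: the pole orders from well-formedness, the Laurent coefficient $1/(w_iw_j)$, and $\zeta^{-N}=1$, which in fact already holds because $N=mrS$ is a multiple of $\operatorname{lcm}(w_0,w_1,w_2,w_3)$.
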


\begin{proof}
The proof follows the same idea as in \cite[Lemma 2.6]{CL26}. Let $V_d$ be a smooth del Pezzo threefold of degree $d$, so
\[-K_{V_d}=2H,\qquad H^3=d,\qquad (-K_{V_d})^3=8d,\]
where $1\le d\le5$.

Riemann–Roch formula on $V_d$ gives
\[\chi(V_d,-mK_{V_d})=\frac{4d}{3}m^3+2dm^2+\frac{2(d+3)}3m+1, \tag{1}\]
using $(-K_{V_d})^3=8d$ and $(-K_{V_d})c_2(V_d)=24$.

For the weighted projective space, the contribution from the pole $t=1$ of $\frac1{\prod_i(1-t^{w_i})}$
has linear coefficient
$\frac{S\left(S^2+\sum_{i<j}w_iw_j\right)}{12P}$. A singular coordinate curve corresponding to $g_{ij}>1$ contributes
$\frac{S}{12P}(g_{ij}^2-1)w_kw_\ell$ to the linear coefficient. The $A$-type assumption is used through
\[w_k+w_\ell\equiv0\pmod{g_{ij}},\]
which gives
\[\sum_{q=1}^{g_{ij}-1}
\frac1{(1-\zeta^q)(1-\zeta^{-q})}
=\frac{g_{ij}^2-1}{12}.\]
where $\zeta$ is a primitive $g_{ij}$-root of unity.
Comparing the linear coefficients with (1) gives
\[\frac{S}{12P}
\left(S^2+\sum_{i<j}w_iw_j
+\sum_{i<j}(g_{ij}^2-1)w_kw_\ell\right)
=\frac{8d+24}{12}.\]
Using $P=S^3/(8d)$ gives 
\[\sum_{i<j}w_iw_j
+\sum_{i<j}(g_{ij}^2-1)w_kw_\ell=
\frac{3}{d}S^2.\]
Finally, by using $\sum_{i<j}(w_i-w_j)^2
=3S^2-8\sum_{i<j}w_iw_j$, we get
\[\sum_{i<j}(w_i-w_j)^2+3\left(\frac8d-1\right)S^2
=8\sum_{i<j}(g_{ij}^2-1)w_kw_\ell.
\tag{$\mathrm{HA}_{V_d}$}\]

Similarly, we can get the Hilbert identity for Fano threefolds with index one.
Let $Y_g$ be a Fano threefold with index one and fixed $g$.

The anticanonical Hilbert polynomial of $Y_g$ is
\[\chi(Y_g,-mK_{Y_g})=
\frac{2g-2}{6}m^3
+\frac{2g-2}{4}m^2
+\frac{g+11}{6}m+1.\]
In particular, $h^0(Y_g,-K_{Y_g})=g+2,$ so
\[ [t^S]\frac1{\prod_i(1-t^{w_i})}=g+2.\] 
Comparison of the linear anticanonical Hilbert coefficient gives
\[\sum_{i<j}w_iw_j+\sum_{i<j}(g_{ij}^2-1)w_kw_\ell=\frac{12}{g-1}S^2.\]
Equivalently,
\[\sum_{i<j}(w_i-w_j)^2+
\left(\frac{96}{g-1}-3\right)S^2
=8\sum_{i<j}(g_{ij}^2-1)w_kw_\ell.
\tag{$\mathrm{HA}_{Y_g}$}\]
\end{proof}

The following lemma is closely related to \cite[Corollary 2.21]{DLT}, but the hypotheses are different. DeVleming–Li–Torres's Corollary 2.21 starts only from their numerical equation for a degeneration of $\PP^n$, and deduces that every codimension-two singularity is $T$; when the stabilizer order is square-free, it is $A_{d-1}$. 
Here we assume the existence of an actual $\QQ$-Gorenstein smoothing, so the $T$-property follows directly and does not depend on the target's volume.

\begin{Lem}
Let $X=\PP(w_0,w_1,w_2,w_3)$ be well-formed and admit a $\QQ$-Gorenstein smoothing to a smooth threefold. Put
\[C_{ij}=\{x_k=x_\ell=0\}\simeq\PP(w_i,w_j),
\qquad d=g_{ij}:=\gcd(w_i,w_j)>1.\]
Then the transverse cyclic quotient singularity at the generic point of $C_{ij}$ is a $T$-singularity. Consequently, there exist positive integers $e,n,a$, with $d=en^2,\ \gcd(a,n)=1,$ such that the transverse singularity is
$\frac1{en^2}(1,ena-1)$. In particular, if $d$ is square-free, then $n=1$, and the transverse singularity is
$\frac1d(1,-1)=A_{d-1}.$ Thus $X$ has transverse $A_{d-1}$-singularities at the general point of $C_{ij}$.
\end{Lem}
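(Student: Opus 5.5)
The plan is to reduce the statement to the classification of two-dimensional cyclic quotient singularities that admit $\QQ$-Gorenstein smoothings, namely Kollár--Shepherd-Barron's $T$-singularities. There are three steps: compute the local structure of $X$ at a general point of $C_{ij}$, cut by a general hyperplane through that point to get a surface germ, and apply the classification.

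\textbf{Local structure.} At a general point $p\in C_{ij}$ both $x_i$ and $x_j$ are nonzero. Working in the chart $x_i\neq0$, the stabilizer of $p$ in $\mathbb{C}^*$ is $\mu_d$ with $d=g_{ij}$. Near $p$ the space $X$ is locally analytically
\[\mathbb{C}\times \mathbb{C}^2/\mu_d,\]
where the first factor is a coordinate along $C_{ij}$ and $\mu_d$ acts on $(x_k,x_\ell)$ with weights $(w_k,w_\ell)$. By well-formedness, $\gcd(w_k,d)=\gcd(w_\ell,d)=1$, so this action is small. Normalizing, the transverse singularity is $\frac1d(1,q)$ with $q\equiv w_k^{-1}w_\ell \pmod d$.

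\textbf{Reduction to a surface.} Let $\pi:\cX\to\Delta$ be the $\QQ$-Gorenstein smoothing and choose a general hypersurface germ $H\subset\cX$ through $p$ transverse to $C_{ij}$, for instance the zero locus of a local function restricting to the coordinate along the curve. Then $H\to\Delta$ is a flat family whose central fibre is the surface germ $H\cap X\cong \frac1d(1,q)$. I expect this to be the main obstacle, because two properties must be checked.
\begin{itemize}
\item[(a)] The general fibre $H\cap X_t$ is smooth. This holds since $X_t$ is smooth and $H$ is general, so Bertini applies to the general fibres.
\item[(b)] The family is still $\QQ$-Gorenstein, i.e.\ $K_{H}$ is $\QQ$-Cartier. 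For this, use adjunction for a Cartier divisor: the index-one cover of $\cX$ near $p$ is étale in codimension one. Restricting it to $H$ gives the index-one cover of the surface family, so $K_{H/\Delta}=(K_{\cX/\Delta}+H)|_H$ is $\QQ$-Cartier.
\end{itemize}
One must also check that $H$ is normal and that its index-one cover restricts correctly. Both follow because $X$ is locally a product along $C_{ij}$, so the smoothing is locally a deformation of $\mathbb{C}\times(\text{surface})$, and a general slice inherits the $\QQ$-Gorenstein property.

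\textbf{Classification.} The surface germ $\frac1d(1,q)$ now admits a $\QQ$-Gorenstein smoothing. By Kollár--Shepherd-Barron (KSB88, Prop.~3.10), a quotient surface singularity with a $\QQ$-Gorenstein smoothing is either a rational double point or a $T$-singularity $\frac1{en^2}(1,ena-1)$ with $\gcd(a,n)=1$. For cyclic quotients the rational double points are exactly $A_{d-1}=\frac1d(1,-1)$, which is the case $n=1$, $a=1$. Hence $d=en^2$ and $q\equiv ena-1$. If $d$ is square-free, then $n^2\mid d$ forces $n=1$, so $q\equiv -1$ and the singularity is $A_{d-1}$. This recovers Lemma~\ref{smoothing}(ii).
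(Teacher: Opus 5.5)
Your proposal is correct and follows the paper's proof essentially step for step: identify the transverse germ as $\frac1d(1,q)$ with $q\equiv w_k^{-1}w_\ell \pmod d$ using well-formedness, restrict the smoothing to a general transverse Cartier slice so that $K_{\mathcal S/\Delta}=(K_{\mathcal X/\Delta}+\mathcal S)|_{\mathcal S}$ is $\QQ$-Cartier, and then apply the Koll\'ar--Shepherd-Barron classification of cyclic $T$-singularities. Your extra remarks fill in details the paper leaves implicit: smoothness of the general fibre of the slice, normality of the slice, and the rational-double-point case $n=1$.
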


\begin{proof} At the general point of $C_{ij}$, the stabilizer has order $d=\gcd(w_i,w_j)$. The two transverse coordinates are $x_k,x_\ell$, so the quotient surface is
$\frac1d(w_k,w_\ell)$. Well-formedness gives
$\gcd(w_k,d)=\gcd(w_\ell,d)=1$. Replacing the generator of $\mu_d$, we may therefore write the singularity as
\[\frac1d(1,q),
\qquad
q\equiv w_k^{-1}w_\ell\pmod d.\]
Now restrict the given smoothing locally to a general Cartier surface transverse to $C_{ij}$. If $\mathcal X\longrightarrow\Delta$ is the original $\QQ$-Gorenstein smoothing and $\mathcal S\subset\mathcal X$ is such a relative transverse Cartier divisor, then
\[K_{\mathcal S/\Delta} =\left(K_{\mathcal X/\Delta}+\mathcal S\right)|_{\mathcal S} \]
is $\QQ$-Cartier. Hence $\frac1d(1,q)$ admits a $\QQ$-Gorenstein smoothing. It is therefore a $T$-singularity.
The classification of cyclic $T$-singularities says that such a singularity has the form
\[\frac1{en^2}(1,ena-1),
\qquad \gcd(a,n)=1.\] 
Since its group order is $d$, we have $d=en^2$. If $d$ is square-free, then $n^2\mid d$ forces $n=1$. Therefore
\[q\equiv ea-1\equiv-1\pmod e,
\qquad e=d,\]
and hence $\frac1d(1,q)\cong\frac1d(1,-1)=A_{d-1}$.
\end{proof}

The following discussion is essentially contained in \cite[Proposition 4.8, Theorem 4.10]{DLT}. Let $X=\PP(w_0,w_1,w_2,w_3)$, let $S=\sum_iw_i,$ and $C_{ij}=\{x_k=x_\ell=0\}\simeq\PP(w_i,w_j), \ \{i,j,k,\ell\}=\{0,1,2,3\}.$ Let $g_{ij}=\gcd(w_i,w_j).$
The generic transverse singularity along $C_{ij}$ is $A_{g_{ij}-1}$ precisely when $g_{ij}\mid w_k+w_\ell.$
For $0\le m\le g_{ij}-2$, define $d_{i,m}$ and $d_{j,m}$ to be the smallest positive integers satisfying
\[w_i d_{i,m}+(m+1)(w_k+w_\ell)\equiv0\pmod {w_j},\]
\[w_j d_{j,m}+(m+1)(w_k+w_\ell)\equiv0\pmod {w_i}.\]
Set
\[b_{ij,m}=
\frac{w_i d_{i,m}+w_j d_{j,m}+(m+1)(w_k+w_\ell)}
{w_iw_j/g_{ij}}.\]
Then \(b_{ij,m}\) is a positive integer.
Define \(f_{ij}\) by
\[\left(\omega_X^{[-1]}|_{C_{ij}}
\right)_{\mathrm{torsion \ free \ part}}
\simeq\mathcal O_{\PP^1}(f_{ij}),\]
\cite[Proposition 4.8]{DLT} gives the integers \(b_{ij,m}\) and the splitting $Q_{ij}\cong\bigoplus_{m=0}^{g_{ij}-2}
\mathcal O_{\PP^1}(-b_{ij,m}),$ 
where \(Q\) is the cokernel of
$\Omega_X\longrightarrow\Omega_X^{[1]}$, and $Q_{ij}=(Q|_{C_{ij}})/{\rm Tors}(Q|_{C_{ij}})$.
The proof of \cite[Theorem 4.10]{DLT} then produces, up to sheaves supported at the coordinate points, summands of degree
$-2+b_{ij,m}+f_{ij},$ and proves
$H^1\!\left(X,\mathcal Ext^1(\Omega_X,\mathcal O_X)\right)=0$
by showing
\[H^1(X,\mathcal Ext^1(\Omega_X,\mathcal O_X))
\simeq  \oplus_{i, j}H^1(C_{ij}, \mathcal Ext^2(Q_{ij}, \omega_X)\otimes \omega_X^\vee)=0\]
when every singular coordinate curve is of $A$-type and an anticanonical section does not vanish generically along each such curve.

Since $\omega_X^{[-1]}=\mathcal O_X(S)$, $f_{ij}$ can be computed directly from the restriction of degree-$S$ monomials to $C_{ij}$.
Up to zero-dimensional modifications at the two coordinate endpoints, a direct sufficient condition is
\[b_{ij,m}+f_{ij}\ge1 \
\text{for every singular curve }C_{ij}
\text{ and every }0\le m\le g_{ij}-2.\]
Because $b_{ij,m}\ge1$, it is enough to know $f_{ij}\ge0.$ 

Let $a$ and $b$ be two positive integers with $\gcd(a,b)=1$. The Frobenius theorem says that the largest integer not expressible as
$\alpha a+\beta b,\ \alpha,\beta\in\mathbb Z_{\ge0},$ is
$ab-a-b.$ Consequently,
\[N>ab-a-b\quad\Longrightarrow\quad
N\in\langle a,b\rangle_{\mathbb Z_{\ge0}}.\]

This is guaranteed by the existence of an anticanonical monomial which does not vanish generically on $C_{ij}$. It is equivalent to, $S\in\left\langle w_i,w_j\right\rangle_{\mathbb Z\ge0}$. 
Then $x_i^\alpha x_j^\beta\in H^0(X,\mathcal O_X(S))$ restricts nontrivially to $C_{ij}$, so $f_{ij}\ge0$.

Thus, if $X$ has three $A$-type coordinate curves and condition $S\in\left\langle w_i,w_j\right\rangle_{\mathbb Z\ge0}$ holds for all three, then
$H^1\!\left(X,\mathcal Ext^1(\Omega_X,\mathcal O_X)\right)=0$
The intersections of the three curves occur only at finitely many coordinate points. They do not contribute $H^1$, but they affect compatibility and local obstruction questions at the codimension-three points.

\begin{Lem}\label{vanishing}
If $X=\PP(w_0,w_1,w_2,w_3)$ has all singular coordinate curves $A$-type coordinate curves and condition 
\[S=w_0+w_1+w_2+w_3\in\langle w_i,w_j\rangle_{\mathbb Z_{\ge0}}\]
holds for all three coordinate curves, then
$H^1\!\left(X,\mathcal Ext^1(\Omega_X,\mathcal O_X)\right)=0.$
\end{Lem}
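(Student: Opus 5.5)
The plan is to localise $\mathcal Ext^1(\Omega_X,\mathcal O_X)$ along the singular coordinate curves and compute its $H^1$ curve by curve, following the discussion preceding the statement (which is \cite[Theorem 4.10]{DLT}).

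\textbf{Step 1: support and decomposition.} Since $X$ is a quotient of $\mathbb C^4\setminus 0$ by $\mathbb C^*$, it is smooth outside the union of the curves $C_{ij}$ with $g_{ij}>1$ and the four coordinate points. Hence $\mathcal Ext^1(\Omega_X,\mathcal O_X)$ is supported on this union. Using the exact sequence $0\to\Omega_X\to\Omega_X^{[1]}\to Q\to 0$ and reflexivity of $\Omega_X^{[1]}$ (duality with $\omega_X$ on the Cohen--Macaulay threefold $X$), I identify $\mathcal Ext^1(\Omega_X,\mathcal O_X)$ with $\bigoplus_{i,j}\mathcal Ext^2(Q_{ij},\omega_X)\otimes\omega_X^\vee$. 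This identification holds up to sheaves supported at the coordinate points and up to the torsion of $Q|_{C_{ij}}$. Zero-dimensional sheaves have no $H^1$. Passing through short exact sequences, their only effect is a surjection onto a skyscraper, which cannot create $H^1$. So it suffices to show $H^1(C_{ij},\mathcal Ext^2(Q_{ij},\omega_X)\otimes\omega_X^\vee)=0$ for each singular curve.

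\textbf{Step 2: computation on one curve.} Fix a singular curve $C_{ij}\simeq\PP^1$. By \cite[Proposition 4.8]{DLT}, together with the $A$-type hypothesis $g_{ij}\mid w_k+w_\ell$, we have $Q_{ij}\cong\bigoplus_{m=0}^{g_{ij}-2}\mathcal O_{\PP^1}(-b_{ij,m})$ with $b_{ij,m}\ge1$. Grothendieck duality for the codimension-two locally complete intersection $C_{ij}$ (generically transverse $A$-singularity, so locally $C\times A_{n}$) gives $\mathcal Ext^2(Q_{ij},\omega_X)\cong\mathcal Hom(Q_{ij},\omega_{C_{ij}})$ modulo torsion. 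Twisting by $\omega_X^\vee|_{C_{ij}}$, whose torsion-free part is $\mathcal O(f_{ij})$, yields line bundles of degree $-2+b_{ij,m}+f_{ij}$, up to endpoint corrections.

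\textbf{Step 3: positivity.} The hypothesis $S\in\langle w_i,w_j\rangle$ produces a monomial $x_i^\alpha x_j^\beta$ of degree $S$. This monomial is a section of $\omega_X^{[-1]}=\mathcal O_X(S)$ that does not vanish identically on $C_{ij}$, so $f_{ij}\ge0$. Then $-2+b_{ij,m}+f_{ij}\ge -1$, and $H^1(\PP^1,\mathcal O(e))=0$ for $e\ge-1$. Summing over $m$ and over the (at most three) singular curves gives the vanishing.

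\textbf{Main obstacle.} The delicate point is the endpoint bookkeeping. One must check that the discrepancies between $\mathcal Ext^1(\Omega_X,\mathcal O_X)$ and the direct sum of the curve contributions are genuinely zero-dimensional. One must also check that the degree of $\omega_X^\vee$ on $C_{ij}$ used in Step 2 is the torsion-free degree $f_{ij}$, and not something lowered by orbifold corrections at the two endpoints of $\PP(w_i,w_j)$. I would verify this by computing on the affine charts $x_i\ne0$ and $x_j\ne0$, where everything reduces to explicit $\mu_{w_i}$- and $\mu_{w_j}$-invariants. This is exactly what the degree formula for $b_{ij,m}$ in \cite{DLT} already encodes, and I will rely on that.
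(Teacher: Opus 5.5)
Your proposal follows the paper's argument: the paper also reduces, via $0\to\Omega_X\to\Omega_X^{[1]}\to Q\to0$ and \cite[Proposition 4.8, Theorem 4.10]{DLT}, to summands of degree $-2+b_{ij,m}+f_{ij}$ on each $A$-type curve $C_{ij}$. It then uses $b_{ij,m}\ge1$ and $f_{ij}\ge0$, where $f_{ij}\ge0$ comes from a degree-$S$ monomial $x_i^\alpha x_j^\beta$, and likewise leaves the endpoint corrections to \cite{DLT}.

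Two small corrections. First, your remark that a skyscraper ``cannot create $H^1$'' holds only in one direction. If the true sheaf is the kernel of a surjection from the curve contribution onto a skyscraper, $H^1$ can appear, e.g.\ $0\to\mathcal O_{\PP^1}(-2)\to\mathcal O_{\PP^1}(-1)\to\mathcal O_p\to0$; this is exactly the endpoint bookkeeping you flag as the main obstacle. Second, $C_{ij}$ is not a local complete intersection in $X$, since the ideal $(u,v,w)$ of the singular line in $uv=w^{n+1}$ needs three generators. The duality $\mathcal Ext^2(Q_{ij},\omega_X)\cong\mathcal Hom(Q_{ij},\omega_{C_{ij}})$ still holds, because $C_{ij}$ is Cohen--Macaulay of codimension two.
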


\begin{Lem}\label{T^1}
Suppose $C\subset X$ is a curve with transversal $A_{r-1}$-singularities. On the canonical cover, let
$N_{C/X}=L\oplus M,$ where $\mu_r$ acts on $L,M$ with opposite characters. Then $\mathcal T^1_{\mathrm{QG}}|_C
\simeq\bigoplus_{j=0}^{r-2}
(\det N_{C/X})^{\otimes(r-j)}.$
\end{Lem}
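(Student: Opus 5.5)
The plan is to reduce to a local computation in a neighbourhood of $C$, modelled on the quotient of the normal bundle, and then track how the coordinates of the versal deformation of an $A_{r-1}$-singularity transform under changes of trivialization along $C$.

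First we fix the local model. On the canonical (index-one) cover, a neighbourhood of $C$ is modelled on the total space of $N_{C/X}=L\oplus M$, with $\mu_r$ acting on $L$ and $M$ by opposite characters. Let $x,y$ be fibre coordinates on $L,M$ over a trivializing open set $U\subset C$. The invariants are
\[u=x^r,\qquad v=y^r,\qquad w=xy,\]
and $X|_U\cong U\times\{uv=w^r\}$. In the weighted projective setting this model is global: near a general point of $C_{ij}$ the transverse coordinates are $x_k,x_\ell$, and the normal bundle is the sum of the corresponding line bundles. So I would take the model to be exact, not just formal, and handle the two endpoints of $C$ separately, or simply work over the locus where the transverse type is exactly $A_{r-1}$. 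Intrinsically, $x$ is a linear coordinate on $L$, so it is a section of $L^\vee$; likewise $y\in M^\vee$. Hence
\[u\in (L^\vee)^{r},\qquad v\in (M^\vee)^{r},\qquad w\in L^\vee\otimes M^\vee=(\det N_{C/X})^{-1}.\]
The defining equation $f=uv-w^r$ is therefore a section of $(\det N)^{-r}$.

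Next we compute $\mathcal T^1$ locally. Since $A_{r-1}$ is Gorenstein (index one), every deformation is $\QQ$-Gorenstein, so $\mathcal T^1_{\mathrm{QG}}=\mathcal T^1$ along $C$. For the hypersurface $uv=w^r$ (relative over $U$), $\mathcal T^1$ is
\[\operatorname{Hom}(I/I^2,\mathcal O)\otimes \mathcal O/(f_u,f_v,f_w)=\mathcal O_C[w]/(w^{r-1})\otimes (I/I^2)^\vee .\]
It is free over $\mathcal O_C$ with basis $1,w,\dots,w^{r-2}$, which corresponds to the versal family $uv=w^r+\sum_{j=0}^{r-2}t_jw^j$. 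Globally, $(I/I^2)^\vee$ is generated by $f^\vee$, which transforms as $(\det N)^{r}$, and the monomial $w^j$ transforms as $(\det N)^{-j}$. So the $j$-th summand is the line bundle $(\det N)^{r-j}$. Equivalently, $t_j$ must have the same weight as $w^r/w^j$ for the deformed equation to be well defined globally.

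It remains to see that the $w$-adic filtration on $\mathcal T^1|_C$ actually splits globally, rather than only having these line bundles as graded pieces. For this I would use the fibrewise $\mathbb C^*$-action scaling $L$ and $M$, say $(x,y)\mapsto(\lambda x,y)$, or the torus action of $X$. This action commutes with $\mu_r$ and with the transition functions, since those are linear in the fibres. Its weight decomposition on $\mathcal T^1$ separates the pieces $w^j$, because they have distinct weights $j-r$, and this gives the direct sum
\[\mathcal T^1_{\mathrm{QG}}|_C\simeq\bigoplus_{j=0}^{r-2}(\det N_{C/X})^{\otimes(r-j)}.\]

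The main obstacle I expect is justifying that the neighbourhood of $C$ is linearizable, that is, that the transition functions of $X$ along $C$ can be taken linear and diagonal in the fibres of $L\oplus M$, so that no higher-order terms mix the different $w^j$. For $C=C_{ij}\subset\PP(w_0,\dots,w_3)$ I would verify this directly from the toric charts: there the transverse coordinates $x_k,x_\ell$ transform by monomials in $x_i,x_j$, which makes linearity automatic away from the endpoints.
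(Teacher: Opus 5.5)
Your proposal is correct and is essentially the paper's argument: the paper also writes the first-order deformations as $uv=w^r+s_{r-2}w^{r-2}+\cdots+s_0$ and observes that $s_j$ must transform like $w^{r-j}$, i.e.\ as a section of $(L\otimes M)^{\otimes(r-j)}$. Your $\mathbb{C}^*$-weight argument for the global splitting, and your remark that the linear fibre model is exact for the coordinate curves $C_{ij}$, make explicit what the paper's one-line proof takes for granted.
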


\begin{proof}
The invariant transverse equation is
$uv=w^r,$ and its first-order deformations are
\[uv=w^r+s_{r-2}w^{r-2}+\cdots+s_1w+s_0.\]
The coefficient $s_j$ transforms as a section of
$(L\otimes M)^{\otimes(r-j)}.$ Therefore,
\[\mathcal T^1_{\mathrm{QG}}|_C
\simeq\bigoplus_{j=0}^{r-2}
(\det N_{C/X})^{\otimes(r-j)}.\]
\end{proof}

\begin{Prop}
Let $Y_0=\{uv=z^{n+1}\}\subset \mathbb A^4_{u,v,z,q}$, where the $q$-axis is the singular locus and the generic transverse singularity is $A_n$.
Suppose $G=\mu_m$ generated by $\xi$ acts diagonally by
$\xi(u,v,z,q)=\bigl(\zeta_m^a u,\zeta_m^bv,\zeta_m^cz,\zeta_m^d q\bigr)$, where $\zeta_m$ is a primitive $m$-th root of unity.  We set $e\in\mathbb Z/m\mathbb Z \ \text{by}\ 
e\equiv a+b\equiv(n+1)c\pmod m.$

Assume also that
\begin{itemize}
\item $G$ acts freely on $Y\setminus\{0\}$; 
\item $e\not\equiv0,\ c,\ d\pmod m$.
\end{itemize}
Then $X_0:=Y_0/G$ has no smoothing compatible with a smoothing of the transverse $A_n$-singularities. In fact, every deformation of $X_0$ remains singular at the endpoint.
\end{Prop}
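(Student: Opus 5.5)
The plan is to work $G$-equivariantly on the cover $Y_0$. Deformations of $X_0=Y_0/G$ (at least the $\QQ$-Gorenstein ones, which is the relevant class) correspond to $G$-equivariant deformations of the index-one cover $Y_0$. Since $Y_0$ is a hypersurface, these are given by perturbations of its equation. Write the equation of $Y_0$ as $f=uv-z^{n+1}$. Under $\xi$ it transforms with character $\zeta_m^e$, since $a+b\equiv(n+1)c\equiv e$. An equivariant deformation is therefore
\[F=uv-z^{n+1}+\sum_{\lambda} t_\lambda\, g_\lambda(u,v,z,q),\]
where every $g_\lambda$ is a semi-invariant of the same character $e$. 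The deformation of $X_0$ is the quotient of $\{F=0\}$, over the base, by the induced $G$-action.

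The first step is to reduce to the $T^1$ of the hypersurface singularity. The Jacobian ideal of $f$ is $(u,v,z^n)$, so
\[T^1_{Y_0}=\mathbb C[u,v,z,q]/(u,v,z^{n})\cong\bigoplus_{j=0}^{n-1}\mathbb C[q]\,z^j.\]
The $G$-invariant part of $T^1$ in character $e$ is spanned by the monomials $z^jq^k$ with $jc+kd\equiv e\pmod m$. Among these, a smoothing of the transverse $A_n$ along the $q$-axis needs the coefficient $s_0(q)$ of $z^0$ to be nonzero, and more generally the coefficients of $z^0,\dots,z^{n-1}$ to be generically nonzero functions of $q$ (this is Lemma~\ref{T^1}).

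The key observation concerns the endpoint $0$. The deformed total space stays singular at the origin exactly when $F$ has no term that is linear in the variables or constant. So I will examine which monomials of degree $\le 1$ can appear with character $e$. The constant $1$ has character $0$, the coordinate $q$ has character $d$, $z$ has character $c$, and $u,v$ have characters $a,b$. The hypothesis $e\not\equiv 0,c,d$ therefore excludes the constant term and the linear terms in $z$ and $q$.

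Linear terms in $u$ or $v$ must still be handled. If $a\equiv e$, then $b\equiv 0$, so $v$ would be fixed by $\xi$. Then $G$ fixes the whole $v$-axis pointwise in $Y_0$, and the $v$-axis lies in $Y_0$ because $u=z=q=0$ satisfies $uv=z^{n+1}$. This contradicts the freeness of the action on $Y_0\setminus\{0\}$. The case $b\equiv e$ is symmetric. Consequently every equivariant perturbation lies in $\mathfrak m^2$, so $F\in\mathfrak m^2$ for every parameter value. The fibre hypersurface is then singular at $0$, and since $0$ is fixed by $G$ the quotient is singular there too. This proves that every deformation stays singular at the endpoint, and in particular that no smoothing exists, whether or not it is compatible with the curve smoothing.

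The main obstacle is justifying the first step rigorously: that every $\QQ$-Gorenstein deformation of $X_0$ lifts to a $G$-equivariant deformation of the hypersurface $Y_0$ that is again a hypersurface, with equation perturbed only by semi-invariants of character $e$. I would cite the standard index-one cover correspondence (Koll\'ar--Shepherd-Barron), together with the fact that deformations of hypersurfaces are unobstructed hypersurfaces and that they can be averaged over the reductive group $G$. Higher-order terms are handled by the same character argument, since any power series in $t$ with semi-invariant coefficients of character $e$ still has no term of degree $\le1$.
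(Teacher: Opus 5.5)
Your overall route is the paper's: lift to a $G$-equivariant deformation of the hypersurface $Y_0$, force the perturbation to be semi-invariant of character $e$, and use $e\not\equiv 0,c,d$ to kill every term of degree $\le 1$. Your freeness argument excluding linear terms in $u,v$ is a nice addition; the paper sidesteps it by writing deformations in the normal form $uv=z^{n+1}+\sum_j f_j(q,\mathbf t)z^j$.

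\textbf{The final step fails as stated.} The inference ``$Y_t$ is singular at the $G$-fixed point $0$, so the quotient is singular there'' is false in general, e.g.\ $\{uv=z^2\}/(z\mapsto -z)\cong\mathbb A^2$. You need that $G$ acts on $Y_t$ freely in codimension one, and then purity of the branch locus. Freeness on $Y_0\setminus\{0\}$ forces $a,b,d$ to be units mod $m$, because the $u$-, $v$- and $q$-axes lie in $Y_0$. Hence a nontrivial element of $G$ can fix only points of the $z$-axis, which meets the threefold $Y_t$ in isolated points. If $X_t$ were smooth at the image of $0$, Zariski--Nagata purity would make $Y_t\to X_t$ \'etale there. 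That contradicts $0$ being a $G$-fixed point (or $Y_t$ being singular at $0$). This is exactly the paper's closing argument, and it is where the freeness hypothesis really enters.

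\textbf{The lifting step needs a different justification.} $Y_0\to X_0$ need not be the index-one cover. $G$ acts on the generator of $\omega_{Y_0}$ through the character $c+d$, which can be $0$. This already happens in the paper's application to $\PP(2,5,8,25)$ at the weight-$8$ point: there $(a,b,c,d)\equiv(1,1,3,1)\pmod 4$ and the germ $\frac18(2,5,1)$ is Gorenstein. So the Koll\'ar--Shepherd-Barron correspondence says nothing about $Y_0$.

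The paper uses Schlessinger's cover-lifting instead. $Y_0\setminus\{0\}\to X_0\setminus\{0\}$ is finite \'etale, and $Y_0$, being a threefold hypersurface, has depth $3$ at $0$. Therefore every deformation of $X_0$, not only the $\QQ$-Gorenstein ones, lifts to a $G$-equivariant deformation of $Y_0$. Your averaging argument then makes the equation semi-invariant of character $e$. With this replacement your proof covers ``every deformation'' as the statement claims, which the index-one route would not.
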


\begin{proof}
Every deformation of $Y_0$ can be written
\[uv=z^{n+1}
+\sum_{j=0}^{n-1}f_j(q,\mathbf t)z^j.\tag{1}\]
Because $G$ acts trivially on the deformation base, an equivariant monomial $q^kz^j$ can occur in (1) only when
$jc+kd \equiv e \pmod m.$ There are exactly three kinds of terms of degree at most one:
$1,\ z,\  q$ of weights $0, c, d$ respectively.  

By the assumption, none of the above terms $1, z, q$ is allowed. Therefore, every perturbing term belongs to
$(z,q)^2.$ Thus every equivariant deformation $\mathcal{X} \to T$ is given by  the form
\[uv-z^{n+1}-H(z,q,\mathbf t)=0,\qquad H\in(z,q)^2.\]
The origin remains on every fibre, and all partial derivatives vanish. Therefore every fibre of the covering deformation remains singular at the origin.
Because $Y_0\setminus\{0\}\to X_0\setminus\{0\}$ is finite \'etale, the standard Schlessinger cover-lifting argument \cite{Sch} shows that every formal deformation of $X_0$ lifts to a $G$-equivariant deformation of $Y_0$.

Suppose now that some quotient fiber $X_t$ were smooth, then
$Y_t\longrightarrow X_t$ would be finite \'etale away from the origin. Hence the branch locus consists of at most one point. Purity of the branch locus asserts that the branch locus is purely of codimension 1 since $X_t$ is smooth. Therefore, $Y_t\to X_t$ is \'etale everywhere. However, the origin is fixed by $G$, so the quotient map is not étale there. This is the required contradiction.
\end{proof}

\begin{Cor}\label{non-smoothing}
Let $X=\PP(w_0, w_1, w_2, w_3)$.
Consider an endpoint
$P_j\in C_{ij}\subset\PP(w_i,w_j,w_k,w_\ell)$ and put
\[g:=\gcd(w_i,w_j)=n+1,\qquad m:=\frac {w_j} g.\]
Assume the transverse singularity is $A_{g-1}$, so
$g\mid w_k+w_\ell$.

After quotienting by the stabilizer $\mu_g$, the index one cover has equation $uv=z^g$ and the $\mu_m$-weights are
\[a \equiv w_k,\qquad
b \equiv w_\ell,\qquad
c \equiv\frac{w_k+w_\ell}{g},\qquad
d \equiv\frac{w_i}{g}\pmod m.\]
Consequently, provided the action is free away from the endpoint, there is no compatible endpoint smoothing whenever $ e:\equiv a+b \not \equiv 0, c, d \pmod m$.
\end{Cor}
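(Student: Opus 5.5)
The plan is to reduce the corollary to the preceding Proposition by writing down an explicit local model of $X$ near the endpoint $P_j$, and then reading off the weights $a,b,c,d$.

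\emph{Local chart at $P_j$.} Near $P_j=\{x_i=x_k=x_\ell=0\}$, $X$ is locally $\mathbb A^3_{x_i,x_k,x_\ell}/\mu_{w_j}$, where $\mu_{w_j}$ acts with weights $(w_i,w_k,w_\ell)$. The curve $C_{ij}$ is the $x_i$-axis. Let $\mu_g\subset\mu_{w_j}$ be the subgroup of order $g=\gcd(w_i,w_j)$. This subgroup fixes $x_i$, since $g\mid w_i$. It acts on $(x_k,x_\ell)$ with weights $(w_k,w_\ell)$, and $w_k\equiv -w_\ell \pmod g$ by the $A$-type assumption. Hence
\[
\mathbb A^3/\mu_g\cong\{uv=z^g\}\times\mathbb A^1_q ,
\]
with invariants $u=x_k^g$, $v=x_\ell^g$, $z=x_kx_\ell$, $q=x_i$. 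Since $g=n+1$, this is exactly the hypersurface $Y_0$ of the Proposition.

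\emph{Residual action.} The quotient group $\mu_{w_j}/\mu_g\cong\mu_m$ acts on $Y_0$, and $X$ near $P_j$ is $Y_0/\mu_m$. To compute its weights, pick a generator $\xi$ of $\mu_{w_j}$. Then $\xi^g$ is not the relevant generator; instead one uses the induced action on the invariants: $u\mapsto\zeta^{gw_k}u$, and so on. Choosing the generator of $\mu_m$ corresponding to $\zeta_{w_j}^g=\zeta_m$ gives the weights
\[
u:\ w_k,\qquad v:\ w_\ell,\qquad z:\ \frac{w_k+w_\ell}{g},\qquad q:\ \frac{w_i}{g}\pmod m .
\]
The precise bookkeeping, namely dividing the exponents by $g$ and tracking which root of unity generates, is routine but is the step to double-check carefully. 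If it is off by an automorphism of $\mu_m$, all weights are multiplied by a common unit, and the condition $e\not\equiv 0,c,d$ is unchanged.

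\emph{Checking $e\equiv(n+1)c$.} We have $e\equiv a+b=w_k+w_\ell=g\cdot\frac{w_k+w_\ell}{g}=(n+1)c$, so the compatibility required in the Proposition holds automatically.

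\emph{Conclusion.} Under the assumed freeness of the action away from the endpoint, together with $e\not\equiv 0,c,d\pmod m$, the Proposition applies. It shows that every deformation of this local model stays singular at the origin. Since any $\mathbb Q$-Gorenstein smoothing of $X$ restricts to a deformation of the germ at $P_j$, no compatible endpoint smoothing exists.

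The only genuine subtlety is the identification of the quotient of the index-one cover with $Y_0$ together with its $\mu_m$-action. The freeness hypothesis is assumed in the statement rather than proved.
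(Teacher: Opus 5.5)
Your proposal is correct and follows the paper's own route. The paper gives no separate argument for this corollary; it is obtained exactly as you do it, by realizing the germ at $P_j$ as $(\{uv=z^g\}\times\mathbb A^1_q)/\mu_m$ via the $\mu_g$-invariants $x_k^g,\ x_\ell^g,\ x_kx_\ell,\ x_i$, reading off the residual weights, and invoking the preceding Proposition. Your explicit check that $a+b=g\cdot\frac{w_k+w_\ell}{g}$ gives $e\equiv(n+1)c$, and your remark that rescaling all weights by a unit of $\mathbb Z/m$ leaves the condition $e\not\equiv 0,c,d$ unchanged, supply the only bookkeeping the paper leaves implicit.
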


\begin{Ex}\label{P(2)}
Let $X=\PP(2,5,8,25)$. This is a numerical candidate for $V_4$. Consider the endpoint $P_2$ in the singular curve $C_{02}$, which is the $(2, 8)$-coordinate curve. We have 
$w_j=8,\ g=2,\ m=4$. The transverse weights are $1,5$, so $a \equiv 1$, $b \equiv 1$, $c \equiv 3$, $d \equiv 1$ and $e \equiv 2 \pmod 4$. 
By the criterion in Corollary~\ref{non-smoothing}, $X$ cannot be smoothed.
\end{Ex}

Except Example~\ref{P(2)}, we cannot exclude any other numerical candidates in Table (1) in Introduction by Corollary~\ref{non-smoothing}.

\section{Preserving Picard rank via \(\QQ\)-Gorenstein smoothing}

Let $X=\PP(w_0,w_1,w_2,w_3)$ be a $\QQ$-Gorenstein degeneration of a smooth Fano threefold $Y$. Then one need not have $\rho(Y)=1$. The techniques developed in this section are used in proving Proposition~\ref{Y6}, Proposition~\ref{Y10}, and Proposition~\ref{Y12}.

\begin{Rmk}\label{jumping}
The explicit example $X=\PP(1,2,3,6)$  has two different $\QQ$-Gorenstein smoothing components, whose general fibres have Picard numbers 2 and 3.

Let $S=\PP(1,2,3).$ The complete linear system $|\mathcal O_S(6)|=|-K_S|$ embeds $\PP(1,2,3)$ as a weak Pezzo surface of degree six in $\PP^6$ with one $A_1$- and one $A_2$-singularity. Its projective cone is
\[\operatorname{Cone}(S)\simeq\PP(1,2,3,6)
\subset\PP^7,\]embedded by
$H=\mathcal O_X(6)$. Moreover,
\[-K_X=\mathcal O_X(12)=2H,
\qquad H^3=\frac{6^3}{1\cdot2\cdot3\cdot6}=6,\]
so $(-K_X)^3=8H^3=48$. The two smoothing components are described by Brown-Reid-Stevens using the Tom and Jerry unprojection formats \cite{BRS}.

One component smooths $X$ to a general hyperplane section
$Y_2\in\left|\mathcal O_{\PP^2\times\PP^2}(1,1)\right|.$ By the Lefschetz theorem,
$\operatorname{Pic}(Y_2)\simeq
\operatorname{Pic}(\PP^2\times\PP^2)
\simeq\mathbb Z^2,$ and hence
$\rho(Y_2)=2.$ Adjunction gives
$-K_{Y_2}=\mathcal O_{Y_2}(2,2)=2\mathcal O_{Y_2}(1,1)$.

The other component smooths $X$ to
\[Y_3=\PP^1\times\PP^1\times\PP^1
\subset\PP^7\]
in its Segre embedding. Here
$\operatorname{Pic}(Y_3)\simeq\mathbb Z^3, \ 
\rho(Y_3)=3,$ and
$-K_{Y_3}=\mathcal O_{Y_3}(2,2,2)=2\mathcal O_{Y_3}(1,1,1).$
Both smooth fibres  satisfy
$(-K)^3=48,$ as required by deformation invariance. The unprojection families are Gorenstein, so in particular they are $\QQ$-Gorenstein. 
\end{Rmk}

To check the Picard rank of a general fiber of $\QQ$-Gorenstein smoothing, we use the nearby- and vanishing-cycles.
For their construction, the specialization triangle, proper base change, and the identification of their stalks with Milnor fibre cohomology,
see \cite[Section 3]{Max20} or \cite[\S4.2]{Dimca}. For the Picard--Lefschetz monodromy formula, see \cite{Del73}.

Let $\pi:\mathcal X\longrightarrow \Delta$ be the projective smoothing with central fibre $X$ and smooth general fibre $Y$. There is a distinguished triangle, 
\[\QQ_X\longrightarrow R\psi_\pi\QQ
\longrightarrow R\phi_\pi\QQ
\xrightarrow{+1}\]
where $R\psi_\pi\QQ$ is the nearby cycle complex, and   $R\phi_\pi\QQ$ is the vanishing-cycle complex. 

Proper base change identifies
\[\mathbb H^k(X,R\psi_\pi\QQ)\simeq H^k(Y,\QQ).\]
Thus $R\phi_\pi\QQ$ measures precisely the topological difference between $X$ and $Y$. Taking hypercohomology gives
\[H^2(X,\QQ)\xrightarrow{\mathrm{sp}}H^2(Y,\QQ)
\longrightarrow\mathbb H^2(X,R\phi_\pi\QQ)
\longrightarrow H^3(X,\QQ).\]
Exactness says
\[\operatorname{coker}\!\left(H^2(X)\to H^2(Y)\right)=
\ker\!\left(\mathbb H^2(X,R\phi_\pi\QQ)\to H^3(X)
\right).\]
Therefore, if
$\mathbb H^2(X,R\phi_\pi\QQ)=0,$ then every degree-two cohomology class on $Y$ comes from $X$:
\[H^2(X,\QQ)\twoheadrightarrow H^2(Y,\QQ).\]

A weighted projective threefold $X=\PP(w_0,w_1,w_2,w_3)$ has the same rational cohomology as $\PP^3$:
\[H^k\!\left(\PP(w_0,w_1,w_2,w_3),\QQ\right)=
\begin{cases}
\QQ,&k=0,2,4,6,\\
0,&k\text{ odd}.
\end{cases}\]
In particular,
\[H^2(X,\QQ)=\QQ,\qquad H^3(X,\QQ)=0.\]
The generator of $H^2(X,\QQ)$ is the rational hyperplane class $c_1(\mathcal O_X(1))$. If \(\mathbb H^2(X,R\phi_\pi\QQ)=0\), specialization gives
\[\QQ=H^2(X,\QQ)\twoheadrightarrow H^2(Y,\QQ).\] Since $Y$ is projective manifold, $H^2(Y,\QQ)\neq0$. Hence
\[H^2(Y,\QQ)\simeq\QQ,\qquad b_2(Y)=1.\]
For a smooth Fano threefold,
$H^1(Y,\mathcal O_Y)=H^2(Y,\mathcal O_Y)=0$ by Kodaira vanishing theorem. The exponential sequence then identifies divisor classes with integral degree-two cohomology, up to irrelevant torsion. Consequently, $\rho(Y)=b_2(Y)=1$.

The family is locally topologically trivial on the smooth locus of $X$. Therefore $R\phi_\pi\QQ$ is supported on the singular curves and their special endpoints.
At a general point of an $A_{r-1}$-curve, the transverse central equation is $uv=z^r.$ Its Milnor fiber $F$  has the homotopy type of a bouquet of $r-1$ two-spheres:
\[\widetilde H^q(F,\QQ)=
\begin{cases}
\QQ^{\,r-1},&q=2,\\
0,&q\neq2.
\end{cases}\]
The degree-two group is naturally the root lattice
$A_{r-1}\otimes\QQ.$  As the point moves along the singular curve, these groups form a local system $\mathbb V$. Its monodromy comes from permuting the roots of the deformed polynomial, equivalently from the Weyl group of $A_{r-1}$.
The hypercohomology spectral sequence is
\[E_2^{p,q}=H^p(X,R^q\phi_\pi\QQ)
\Longrightarrow
\mathbb H^{p+q}(X,R\phi_\pi\QQ).\]
Since $R^q\phi_\pi\QQ=0$ for $q<2$, the only contribution to total degree two is
\[\mathbb H^2(X,R\phi_\pi\QQ)=
H^0(X,R^2\phi_\pi\QQ).\]
But $H^0$ of a local system is its monodromy-invariant part.
So the relevant group is not the direct sum of all local Milnor lattices. It consists only of vanishing-cycle classes that can be transported consistently around the entire singular curve and through its endpoints.

\begin{Lem}\label{rho1}
Let $\pi:\mathcal X\longrightarrow \Delta$ be the projective smoothing with central fibre $X$ and smooth Fano threefold $Y$. 
If $H^2(X, \QQ)\simeq\QQ$ and $\mathbb H^2(X,R\phi_\pi\QQ)=0$ then $\rho(Y)=1$.
\end{Lem}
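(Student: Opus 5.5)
The plan is to assemble the argument already sketched in the discussion preceding the statement into three steps: the specialization sequence, a comparison of Betti numbers, and a passage from $b_2$ to $\rho$.

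\textbf{Step 1: surjectivity of specialization.} We use the distinguished triangle $\QQ_X\to R\psi_\pi\QQ\to R\phi_\pi\QQ\xrightarrow{+1}$ and take hypercohomology on $X$. The hypothesis is that $\pi$ is projective over a small disc $\Delta$. Proper base change then identifies $\mathbb H^k(X,R\psi_\pi\QQ)$ with $H^k(Y,\QQ)$: the inclusion $X\hookrightarrow\mathcal X$ is a homotopy equivalence after shrinking $\Delta$, and $\mathcal X^*\to\Delta^*$ is a fibre bundle. The long exact sequence therefore gives
\[H^2(X,\QQ)\xrightarrow{\mathrm{sp}}H^2(Y,\QQ)\to\mathbb H^2(X,R\phi_\pi\QQ).\]
By assumption the right-hand term vanishes, so $\mathrm{sp}$ is surjective.

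\textbf{Step 2: $b_2(Y)=1$.} Since $H^2(X,\QQ)\simeq\QQ$, Step 1 gives $b_2(Y)\le 1$. For the reverse inequality, $Y$ is a smooth projective variety of positive dimension, so an ample class has nonzero cube and hence is nonzero in $H^2(Y,\QQ)$. Thus $b_2(Y)=1$.

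\textbf{Step 3: from $b_2$ to $\rho$.} Since $Y$ is Fano, $-K_Y$ is ample. Kodaira vanishing, applied as $H^i(Y,K_Y\otimes(-K_Y))=0$ for $i>0$, gives $H^1(Y,\mathcal O_Y)=H^2(Y,\mathcal O_Y)=0$. The exponential sequence then gives an isomorphism $\operatorname{Pic}(Y)\simeq H^2(Y,\mathbb Z)$. Tensoring with $\QQ$ yields $\rho(Y)=\operatorname{rank}\operatorname{Pic}(Y)=b_2(Y)=1$.

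\textbf{Main obstacle.} The only step with real content is the identification of nearby cycles with the cohomology of $Y$ in Step 1, which is standard and can be cited from \cite{Max20} or \cite{Dimca}. The substantive work behind the lemma is verifying the hypothesis $\mathbb H^2(X,R\phi_\pi\QQ)=0$ in concrete examples; this lies outside the proof of the lemma itself.
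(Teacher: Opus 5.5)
Your proof is correct and follows the same route as the paper's argument preceding the lemma. In both, the specialization triangle with proper base change gives a surjection $H^2(X,\QQ)\twoheadrightarrow H^2(Y,\QQ)$, an ample class gives $b_2(Y)=1$, and Kodaira vanishing plus the exponential sequence identify $\rho(Y)$ with $b_2(Y)$; you additionally make explicit two points the paper leaves implicit, namely the nonzero cube of the ample class and the retraction of $\mathcal X$ onto $X$ behind the nearby-cycle identification.
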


\begin{Rmk}\label{jumping-2}
In Remark~\ref{jumping}, the vanishing cycle term is nonzero. That is exactly why the Picard number increases.

Let
$X=\PP(1,2,3,6),\ H=\mathcal O_X(6).$ Then
\[H^2(X,\QQ)=\QQ[H],
\qquad H^3(X,\QQ)=0.\]For either smoothing component, the nearby-cycle sequence becomes
\[H^2(X,\QQ)\longrightarrow H^2(Y,\QQ)
\longrightarrow\mathbb H^2(X,R\phi_\pi\QQ)
\longrightarrow0.\]The class $H$ extends as the projective hyperplane class, so the first map is nonzero and therefore injective. Hence
\[\mathbb H^2(X,R\phi_\pi\QQ)\simeq
\frac{H^2(Y,\QQ)}{\QQ[H_Y]}.\]For the first smoothing, $Y_2\subset \PP^2\times(\PP^2)^\vee.$ Write $h_1,h_2$ for the two ambient hyperplane classes. Then
\[H^2(Y_2,\QQ)
=\QQ h_1\oplus\QQ h_2,\]and the central hyperplane class specializes to
$H\longmapsto h_1+h_2.$
Therefore
\[\mathbb H^2(X,R\phi\QQ)\simeq
\frac{\QQ h_1\oplus\QQ h_2}{\QQ(h_1+h_2)}
\simeq\QQ.\]
The additional class can be represented by $h_1-h_2$. Thus
$b_2(Y_2)=2$.

For the Segre smoothing,
$Y_{3}=(\PP^1)^3$. Writing $h_1,h_2,h_3$ for the three factor classes,
\[H^2(Y_3,\QQ)
=\QQ h_1\oplus\QQ h_2\oplus\QQ h_3,\]
while $H\longmapsto h_1+h_2+h_3.$

Hence
\[\mathbb H^2(X,R\phi\QQ)\simeq
\frac{\QQ^3}{\QQ(1,1,1)}\simeq\QQ^2.\] Thus $b_2(Y_3)=3.$
\end{Rmk}

\begin{Lem}\label{monodromy}
Let $\pi:\mathcal Z\longrightarrow\Delta$ be a projective $\QQ$-Gorenstein smoothing of a projective threefold $Z$, with a smooth general fiber $Y$. 
Let $C\subset Z$ be a singular curve with generic transverse equation $uv=z^r.$ Suppose that the smoothing along $C$ has the form $uv=p_q(z)$, where
$p_q(z)=z^r+s_{r-2}(q)z^{r-2}+\cdots+s_0(q).$
Let $C^\circ$ be the complement of the discriminant and endpoint points on an $A_{r-1}$-curve. 
Let $G\subset S_r$ be the monodromy group of the roots of $p_q(z)$. 

Suppose $p_q(z)$ has $r$ distinct roots for $q\in C^\circ$, and $G$ acts transitively on the $r$ roots.
Then the vanishing-cycle local system $\mathbb V$ along $C^\circ$ has no nonzero global section:
\[A_{r-1,\QQ}^{G}\simeq H^0(C^\circ,\mathbb V)=0.\]
In particular, this holds if $G=S_r$, or if $G$ contains an $r$-cycle.
\end{Lem}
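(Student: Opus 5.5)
The plan is to identify the stalk of $\mathbb V$ at a point $q\in C^\circ$ with the reduced degree-two cohomology of the Milnor fibre of $uv=p_q(z)$, and to describe it combinatorially in terms of the roots of $p_q$. Over $C^\circ$ the polynomial $p_q$ has $r$ distinct roots $z_1(q),\dots,z_r(q)$. The surface $\{uv=p_q(z)\}$ is then smooth and fibred over the $z$-line by conics, degenerating exactly over the $z_a$. It therefore retracts onto a chain of $r-1$ two-spheres, one lying over a path from $z_a$ to $z_{a+1}$. The standard description (vanishing cycles of an $A_{r-1}$ singularity) gives a canonical isomorphism
\[
\widetilde H^2(F_q,\QQ)\;\simeq\;\Bigl\{\textstyle\sum_{a} c_a e_{z_a}\in \QQ^{\{z_1,\dots,z_r\}} : \sum_a c_a=0\Bigr\},
\]
where the classes $e_{z_a}-e_{z_b}$ correspond to the spheres over paths joining $z_a$ to $z_b$, up to sign. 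This is the root lattice $A_{r-1}\otimes\QQ$ inside the permutation representation of the root set.

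The key step is naturality. Parallel transport of $\mathbb V$ along a loop $\gamma$ in $C^\circ$ is induced by the isotopy of the fibres $\{uv=p_{\gamma(t)}(z)\}$. That isotopy moves the roots by the permutation $\sigma_\gamma\in G$ and carries the sphere over a path from $z_a$ to $z_b$ to the sphere over the transported path from $z_{\sigma(a)}$ to $z_{\sigma(b)}$.

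Up to the orientation convention, the monodromy representation on the stalk is therefore the restriction of the permutation representation of $G$ on $\QQ^r$ to the sum-zero hyperplane. There is a possible sign twist: for an odd-dimensional-fibre Picard--Lefschetz situation one could worry about orientation reversal. In complex dimension two, however, the conic fibration is canonically oriented by the complex structure, so no twist occurs. I expect this identification of the monodromy with the permutation action, with correct signs, to be the main point needing care. I would justify it by trivializing the family of pairs $(\mathbb C_z,\{\text{roots}\})$ over $\gamma$ and lifting it to the conic bundle.

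Granting this, $H^0(C^\circ,\mathbb V)$ equals the invariants $A_{r-1,\QQ}^G$, since $C^\circ$ is connected, being a curve minus finitely many points. Because $G$ acts transitively, the permutation representation satisfies $(\QQ^r)^G=\QQ\cdot(1,\dots,1)$: an invariant vector has constant coefficients on each orbit, and there is a single orbit. The vector $(1,\dots,1)$ has coefficient sum $r\neq0$, so it meets the sum-zero hyperplane only in $0$. Hence $A_{r-1,\QQ}^G=0$. The final clause is immediate: both $S_r$ and any group containing an $r$-cycle act transitively on $r$ letters.
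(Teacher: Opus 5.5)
Your argument is the paper's proof, made slightly more explicit. You identify the stalk of $\mathbb V$ with $\widetilde H^2$ of the transverse Milnor fibre $\{uv=p_q(z)\}$, realized as the sum-zero hyperplane in the permutation representation on the roots. You then note that monodromy along a loop $\gamma$ acts through the permutation $\sigma_\gamma\in G$, and use transitivity to force an invariant vector to be constant, hence zero.

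The one point to correct is your reason why there is no sign twist.

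\begin{itemize}
\item The complex structure orients the conic fibres as surfaces, but it does not orient the generator of $H_1(\mathbb C^*)$.
\item Indeed, the holomorphic involution $(u,v,z)\mapsto(v,u,z)$ fixes every root but reflects each circle fibre of every vanishing sphere $\{|u|=|v|\}$ over a path. So it acts by $-1$ on $\widetilde H^2(F_q,\QQ)$.
\item A loop along which the two branches $u,v$ are interchanged would therefore act by $-\sigma_\gamma$, and this can have invariants. For $r=2$ with $\sigma_\gamma$ the transposition, $-\sigma_\gamma$ is the identity.
\end{itemize}

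What rules this out is the hypothesis itself. The smoothing is written as $uv=p_q(z)$ with $u,v$ defined along all of $C^\circ$, so the eigen-directions $L,M$ are never exchanged. When you lift the isotopy of $(\mathbb C_z,\{\text{roots}\})$ to the conic bundles, the circle orientation given by $\arg u$ is therefore carried to itself. This is exactly what your isotopy-lifting argument uses, and it is also the implicit justification behind the paper's bare assertion that the monodromy is the plain permutation of coordinates.
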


\begin{proof}
Fix a base point $q_0\in C^\circ$ and label the roots $\lambda_1,\ldots,\lambda_r.$ The transverse Milnor fiber is
$F_{q_0}=\{uv=p_{q_0}(z)\},$ and its degree-two reduced cohomology is the rational $A_{r-1}$-root lattice:
\[ H^2(F_{q_0},\QQ)\simeq A_{r-1,\QQ}=
\left\{(a_1,\ldots ,a_r)\in\QQ^r:
\sum_{i=1}^r a_i=0\right\}.\]
The roots $e_i-e_j$ correspond to the vanishing cycles associated with paths joining $\lambda_i$ and $\lambda_j$.
Analytic continuation around a loop in $C^\circ$ permutes the roots $\lambda_i$. Under the above identification, the resulting monodromy on the vanishing-cycle lattice is the corresponding permutation of the coordinates:
\[\sigma(a_1,\ldots ,a_r)=
(a_{\sigma^{-1}(1)},\ldots ,a_{\sigma^{-1}(r)}).\]
Thus the monodromy representation on $A_{r-1,\QQ}$ is the restriction of the permutation representation of $G\subset S_r$.
Now let
\[a=(a_1,\ldots ,a_r)\in A_{r-1,\QQ}^{G}.\]
Because $G$ is transitive, for every $i,j$ there exists $\sigma\in G$ such that $\sigma(i)=j$. Since $a$ is $G$-invariant, $a_i=a_j.$ Therefore all coordinates are equal:
$a_1=\cdots=a_r=\lambda.$ But $a\in A_{r-1,\QQ}$, so
$0=\sum_{i=1}^r a_i=r\lambda.$ Over $\QQ$, this implies $\lambda=0$. Hence
$A_{r-1,\QQ}^{G}=0$. Finally, global sections of a local system are precisely monodromy-invariant vectors, so 
$H^0(C^\circ,\mathbb V)\simeq A_{r-1,\QQ}^{G}=0$.
\end{proof}

\section{Constructing examples}\label{theorem}

\begin{Prop}
The weighted projective space $X=\PP(5,12,18,25)$ admits a $\QQ$-Gorenstein smoothing to $V_1$, which is a degree $6$ hypersurface $X_6 \subset\PP(1,1,1,2,3).$
\end{Prop}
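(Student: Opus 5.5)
The plan is to realise $X=\PP(5,12,18,25)$ as a degenerate member of the family of sextic hypersurfaces in $\PP(1,1,1,2,3)$, using the half-anticanonical section ring. Here $S=60$, so $-K_X=\mathcal O_X(60)$, and the natural candidate for the fundamental divisor is $H_X=\mathcal O_X(30)$, with $-K_X=2H_X$ and $H_X^3=30^3/(5\cdot12\cdot18\cdot25)=1$, matching $V_1$. I would first compute the graded ring
\[R=\bigoplus_{n\ge0}H^0(X,\mathcal O_X(30n)),\]
which is the ring of monomials in $x_0,\dots,x_3$ whose degree is divisible by $30$, i.e.\ a toric (semigroup) ring. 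In degree one there are exactly three monomials, $u_0=x_0^6$, $u_1=x_0x_3$ and $u_2=x_1x_2$, agreeing with $h^0(V_1,H)=3$. In degrees two and three I would list the new generators, starting with $y=x_1^5$ (degree $60$) and $z=x_2^5$ (degree $90$). The goal is to show that $R$ is generated by elements of weights $1,1,1,2,3$ with a single relation of weight $6$, so that
\[X\simeq\operatorname{Proj}R\simeq X_6^{0}\subset\PP(1,1,1,2,3)\]
for a (binomial, or at worst reducible-to-binomial) sextic $X_6^0$. Since $X$ is normal, $R$ is the full section ring, and $\mathcal O_X(30)$ corresponds to $\mathcal O(1)$ on the hypersurface.

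This identification is the step I expect to be the main obstacle. Ruling out extra generators in degrees $\ge 4$ and pinning down the exact relation means comparing the Hilbert series $\sum_n[t^{30n}]\prod_i(1-t^{w_i})^{-1}$ with $(1-t^6)/\bigl((1-t)^3(1-t^2)(1-t^3)\bigr)$. The linear Hilbert identity of Lemma~\ref{linear} together with the volume equation already forces the two series to agree in the leading coefficients. I would first try to verify the remaining equality directly by the pole-expansion at roots of unity used in the proof of Lemma~\ref{linear}, and then exhibit the relation by hand.

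If the plain Veronese turns out not to be a hypersurface, my fallback is to pass to a Gorenstein index-one cover, or to use a local-to-global argument instead. That argument would run as follows. Each singular coordinate curve of $X$ is of $A$-type: $g_{01}=1$, $g_{02}=1$, $g_{03}=5$, $g_{12}=6$, $g_{13}=1$, $g_{23}=1$, so the curves are $C_{03}$ and $C_{12}$. One then checks $S=60\in\langle w_i,w_j\rangle$ for these curves, so that Lemma~\ref{vanishing} gives $H^1(X,\mathcal Ext^1(\Omega_X,\mathcal O_X))=0$ and local $\QQ$-Gorenstein smoothings of the $A$-curves globalise. The endpoint compatibility would be checked against Corollary~\ref{non-smoothing}.

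Granting the hypersurface description, the smoothing itself is routine. Let
\[\mathcal X=\{F_0+tF=0\}\subset\PP(1,1,1,2,3)\times\Delta,\]
where $F_0$ is the equation of $X_6^0$ and $F$ is a general sextic. The general fibre is a quasi-smooth sextic avoiding the two singular points of the ambient space, hence a smooth $V_1$. The family is flat, and $K_{\mathcal X/\Delta}=\mathcal O(-2)|_{\mathcal X}$ is Cartier on the smooth locus of the ambient space and $\QQ$-Cartier everywhere, being the restriction of a $\QQ$-Cartier divisor. So this is a $\QQ$-Gorenstein smoothing. The identification of the fibre as $V_1$ needs no vanishing-cycle argument: every smooth sextic in $\PP(1,1,1,2,3)$ is a del Pezzo threefold of degree one with $\rho=1$.
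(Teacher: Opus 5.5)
There is a genuine gap, and it sits exactly at the step you flagged: $X$ is not a sextic hypersurface in $\PP(1,1,1,2,3)$, degenerate or otherwise. A monomial $x_0^ax_1^bx_2^cx_3^d$ has degree divisible by $30$ if and only if $a\equiv d\pmod 6$ and $b\equiv c\pmod 5$. Hence the section ring of $\mathcal O_X(30)$ is
\[
R=\mathbb C[x_0^6,\,x_0x_3,\,x_3^6]\otimes\mathbb C[x_1x_2,\,x_1^5,\,x_2^5].
\]
Its generators have weights $1,1,5$ and $1,2,3$, and there are exactly two relations: $(x_1x_2)^5=x_1^5\cdot x_2^5$ in weight $5$ and $(x_0x_3)^6=x_0^6\cdot x_3^6$ in weight $6$. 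You missed $x_3^6$, which is a minimal generator of weight $5$ because $25k\equiv0\pmod{30}$ forces $6\mid k$. So $X$ is a $(5,6)$ complete intersection in $\PP(1,1,1,2,3,5)$. Your Hilbert-series test cannot detect this: $\frac{(1-t^5)(1-t^6)}{(1-t)^3(1-t^2)(1-t^3)(1-t^5)}$ is literally the Hilbert series of a sextic in $\PP(1,1,1,2,3)$, since the extra generator and the extra relation cancel in the same degree. There is also an intrinsic obstruction. The germ $P_3=[0:0:0:1]\in X$ is $\frac1{25}(5,12,18)$, with canonical character $35\equiv10\pmod{25}$, so it has Gorenstein index $5$. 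Every point of a normal sextic in $\PP(1,1,1,2,3)$ has index $1$, $2$ or $3$. Hence no family $\{F_0+tF=0\}\subset\PP(1,1,1,2,3)\times\Delta$ can have $X$ as its central fibre.

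The missing idea is to keep the weight-$5$ variable and deform the weight-$5$ equation so that it becomes linear in that variable. Write $y_0=x_1x_2$, $y_1=x_0x_3$, $y_2=x_0^6$, $z=x_1^5$, $u=x_2^5$, $v=x_3^6$. The paper takes
\[
y_0^5-zu=tv,\qquad y_1^6-y_2v=tg_6
\]
in $\PP(1,1,1,2,3,5)\times\mathbb A^1_t$. For $t\neq0$ you can solve for $v$, and the fibre becomes the sextic $ty_1^6-y_2y_0^5+y_2zu-t^2g_6=0$, which is general when $g_6$ is general. By adjunction the relative canonical class is $\mathcal O(-2)$, so the family is $\QQ$-Gorenstein, and your final paragraph then applies verbatim to the general fibre.

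Your fallback does not substitute for this. Corollary~\ref{non-smoothing} is only an obstruction, so passing it produces no compatible local smoothings at $P_1$, $P_2$, $P_3$. Even with an abstract smoothing in hand, $(-K_Y)^3=8$ does not single out $V_1$: the prime Fano threefold $Y_5$ has the same volume, and $\PP(5,12,18,25)$ is also a numerical candidate for $Y_5$. You would therefore still have to prove Fano index $2$, which is what your hypersurface shortcut was supposed to supply for free.
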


\begin{proof}
Let
\[X=\PP(5,12,18,25)
=\operatorname{Proj}\mathbb C[x_0,x_1,x_2,x_3],\]where $\deg(x_0,x_1,x_2,x_3)=(5,12,18,25)$.

Consider the following monomials:
\[\begin{array}{c|c|c}
\text{variable}&\text{monomial}&\text{degree divided by }30\\ \hline
y_0&x_1x_2&1\\
y_1&x_0x_3&1\\
y_2&x_0^6&1\\
z&x_1^5&2\\
u&x_2^5&3\\
v&x_3^6&5
\end{array}\]
Thus these give weighted coordinates of weights
$(1,1,1,2,3,5).$ They satisfy
$y_0^5=zu,\ y_1^6=y_2v,$ and
\[X\simeq
\{y_0^5=zu, y_1^6=y_2v \}
\subset\PP(1,1,1,2,3,5). \]This is a weighted complete intersection of degrees 5 and 6.
Let
$g_6(y_0,y_1,y_2,z,u)$ be a sufficiently general weighted homogeneous polynomial of degree 6.
Define
$\mathcal X\subset\PP(1,1,1,2,3,5)\times\mathbb A^1_t$ by
\[\left\{\begin{array}{l}
y_0^5-zu=tv,\\
y_1^6-y_2v=tg_6.
\end{array}\right.\]

At $t=0$, one sees that  $\mathcal X_0 =X \simeq\PP(5,12,18,25)$.
For $t\neq0$,  we obtain
\[\mathcal{X}_t=\{ty_1^6-y_2y_0^5+y_2zu-t^2g_6=0 \} \subset \mathbb{P}(1,1,1,2,3). \] Therefore, for $t\neq0$, $\mathcal X_t\simeq
X_{6}\subset\PP(1,1,1,2,3)$ for a general choice of $g_6$, which is of type $V_1$. 
\end{proof}

\begin{Prop}
The weighted projective space $X=\PP(3,4,8,9)$ admits a $\QQ$-Gorenstein smoothing to $V_2.$
\end{Prop}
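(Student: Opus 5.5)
I would follow the strategy used for $\PP(5,12,18,25)\rightsquigarrow V_1$. The idea is to re-embed $X$ by the Veronese subring of the half-anticanonical degree as a weighted complete intersection, then deform the equations so that one variable can be eliminated for $t\neq0$. Here $S=3+4+8+9=24$ and $-K_X=\mathcal O_X(24)=2H$ with $H=\mathcal O_X(12)$. So $V_2$ should appear as a quartic double solid $X_4\subset\PP(1,1,1,1,2)$. The target is consistent with the volume equation: $H^3=12^3/(3\cdot4\cdot8\cdot9)=2$.

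\emph{Step 1 (re-embedding).} I would take the generators of $\bigoplus_k H^0(X,\mathcal O_X(12k))$ to be
\[y_0=x_0^4,\ y_1=x_1^3,\ y_2=x_0x_3,\ y_3=x_1x_2\ (\text{weight }1),\qquad z=x_2^3\ (\text{weight }2),\qquad w=x_3^4\ (\text{weight }3).\]
These satisfy the binomial relations $y_2^4=y_0w$ and $y_3^3=y_1z$. I would then check that a monomial $x_0^ax_1^bx_2^cx_3^d$ with $3a+4b+8c+9d\equiv0\pmod{12}$ is a product of these six generators. This amounts to a short case analysis of the residues of $(c,d)$ modulo $(3,4)$. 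Comparing Hilbert series would show that the two relations generate the ideal, which gives
\[X\simeq\{y_2^4=y_0w,\ y_3^3=y_1z\}\subset\PP(1,1,1,1,2,3),\]
a complete intersection of degrees $4$ and $3$.

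\emph{Step 2 (the family).} Define $\mathcal X\subset\PP(1,1,1,1,2,3)\times\mathbb A^1_t$ by
\[y_3^3-y_1z=tw,\qquad y_2^4-y_0w=tg_4,\]
where $g_4(y_0,\dots,y_3,z)$ is a general form of degree $4$ containing the term $z^2$. The family is a flat family of weighted complete intersections, so $K_{\mathcal X/\Delta}=\mathcal O(4+3-11)$ is $\QQ$-Cartier and the smoothing is $\QQ$-Gorenstein. For $t\neq0$, eliminating $w=(y_3^3-y_1z)/t$ gives
\[ty_2^4-y_0y_3^3+y_0y_1z-t^2g_4=0\subset\PP(1,1,1,1,2).\]
This is a general quartic in which $z^2$ appears, hence a smooth double cover of $\PP^3$ branched along a quartic, i.e.\ $V_2$. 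Smoothness and Picard number one then follow from Bertini and the Lefschetz theorem for quasi-smooth hypersurfaces. Lemma~\ref{rho1} is not needed here.

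\emph{Step 3 ($\PP(3,14,18,49)$).} Here $S=84$ and $H=\mathcal O_X(42)$. I would again list the monomials of degree $42k$, for example $x_0^{14}$, $x_1^3$, $x_0^8x_2$ in degree $42$ and $x_0^7x_1x_3$ in degree $84$. From these I would extract a presentation of $X$ as a codimension-two (or higher) weighted complete intersection. Then I would look for a degeneration of the same shape in which one high-weight variable is eliminated by a first-order perturbation and the result is a quartic in $\PP(1,1,1,1,2)$. If no complete-intersection presentation is available, the fallback is the local-to-global route. That route uses Lemma~\ref{vanishing}, whose hypothesis $S\in\langle w_i,w_j\rangle$ I would verify on each $A$-curve, together with Lemma~\ref{T^1} to smooth the transversal curves, endpoint checks via Corollary~\ref{non-smoothing}, and Lemma~\ref{monodromy} with Lemma~\ref{rho1} to force $\rho=1$.

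The main obstacle I expect is Step~1 for $\PP(3,14,18,49)$: finding a presentation with few equations of low codimension whose deformation visibly eliminates a variable. For $\PP(3,4,8,9)$, the delicate point is only to confirm that the listed generators and the two relations exhaust the Veronese ring.
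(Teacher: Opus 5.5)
Your argument for \(\PP(3,4,8,9)\) is correct, and it takes a genuinely different route from the paper.

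\textbf{The paper's route.} The proof works by local-to-global deformation theory. It smooths the \(A_2\)-curve \(C_3\simeq\PP(3,9)\) and the \(A_3\)-curve \(C_4\simeq\PP(4,8)\) using the sections \(x_0^3+x_3\) and \(x_1^2+x_2\). It checks compatible smoothings at the endpoints \(\frac18(3,4,1)\) and \(\frac19(3,4,8)\) through their index-one covers. It then kills \(H^1(X,\mathcal Ext^1(\Omega_X,\mathcal O_X))\) by Lemma~\ref{vanishing}, using \(24=3\cdot8=4\cdot6\). Finally, it extends \(\mathcal O_X(12)\) over the family, using \(H^1(\mathcal O_X)=H^2(\mathcal O_X)=0\), to see that the general fibre has index \(2\) and \(H^3=2\).

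\textbf{Your route.} You build an explicit global family modelled on the \(V_1\) construction, and each step checks out.
\begin{itemize}
\item Step~1 can be shortened. The congruence \(3a+4b+8c+9d\equiv0\pmod{12}\) splits as \(a\equiv d\pmod 4\) and \(b\equiv c\pmod 3\). Hence the \(12\)-th Veronese ring is \(\mathbb C[x_0,x_3]^{\mu_4}\otimes\mathbb C[x_1,x_2]^{\mu_3}\), a tensor product of the \(A_3\)- and \(A_2\)-rings. This gives your two relations at once, with no case analysis or Hilbert series comparison.
\item For \(t\neq0\) the point \(P_w\) is not on the fibre, so \(w\) can be eliminated.
\item For fixed \(t\neq0\), as \(g_4\) varies the resulting quartic runs over all of \(H^0(\PP(1,1,1,1,2),\mathcal O(4))\). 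A general choice with the \(z^2\) coefficient nonzero is therefore a smooth quartic double solid.
\item Flatness holds because the two equations form a regular sequence on the affine cone, and \(t\) is a nonzerodivisor on the Cohen--Macaulay quotient.
\end{itemize}
One slip: the weights of \(\PP(1,1,1,1,2,3)\) sum to \(9\), not \(11\). So \(K_{\mathcal X/\Delta}=\mathcal O(4+3-9)=\mathcal O(-2)\), which is what matches \(-K_{V_2}=2H\). The \(\QQ\)-Cartier conclusion is unaffected.

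\textbf{Comparison.} Your construction bypasses the obstruction theory, the endpoint analysis, and the line-bundle extension and index argument. The general fibre is identified on the nose, and no Picard-number argument is needed. The price is that it relies on a special fact: the half-anticanonical Veronese ring is a complete intersection. This reflects that the gcd graph here consists of two disjoint edges, \(3\)--\(9\) and \(4\)--\(8\). The paper's method is uniform across the numerical candidates and is what is needed when no such presentation exists. For \(\PP(3,14,18,49)\) the paper indeed uses a partial smoothing together with the Brown--Reid--Stevens Jerry smoothing at \(\frac1{18}(3,14,13)\). Your Step~3 concerns that separate statement and is not needed here.
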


\begin{proof}
Let $X=\PP(3,4,8,9)= \operatorname{Proj}\mathbb C[x_0,x_1,x_2,x_3]$, where $\deg(x_0,x_1,x_2,x_3)=(3,4,8,9)$. The nontrivial pairwise gcds are
$\gcd(3,9)=3,\ \gcd(4,8)=4.$ Thus the singular locus consists of two disjoint curves
\[C_3=\{x_1=x_2=0\}\simeq\PP(3,9)\simeq\PP(1,3),
\qquad C_4=\{x_0=x_3=0\}\simeq\PP(4,8)\simeq\PP(1,2).\]

Along $C_3$,  the generic transverse singularity is $A_2$, and along $C_4$, the generic transverse singularity is $A_3$.
For $C_3$, take $s_3=x_0^3+x_3\in H^0\bigl(\PP(1,3),\mathcal O(3)\bigr)$. This is a smoothing coefficient for the transverse $A_2$-equation $uv=w^3+\tau s_3$. Its zero is simple because
$\frac{\partial (x_0^3+x_3)}{\partial x_3}=1.$
For $C_4$, take
$s_4=x_1^2+x_2\in H^0\bigl(\PP(1,2),\mathcal O(2)\bigr).$ It gives $uv=w^4+\tau s_4$, and again its zero is simple:
$\frac{\partial(x_1^2+x_2)}{\partial x_2}=1$. Hence these equations smooth the generic points of both singular curves, including the points at which $s_3$ or $s_4$ vanishes.

The nontrivial endpoint of $C_4$ is
$P_2=[0:0:1:0]$, whose  local singularity is of type $\frac18(3,4,1)$. Take $G_4$ the order-four subgroup of $\mu _8$ fixing the coordinate of weight $4$. We consider 
\[u=x_0^4,\qquad v=x_0 x_3,\qquad w=x_3^4\] satisfying with relation
$uw=v^4.$ The residual \(\mu _2\) acts by
$(u,v,w,x_1)\longmapsto(-u,-v,-w,-x_1)$. Therefore
\[\frac18(3,4,1)\simeq\{uw=v^4\}/\mu _2.\]
On the chart $x_2=1$, the section $s_4=x_1^2+x_2$ becomes $x_1^2+1$. Consider
$\mathcal U_2:
uw=v^4+\tau(x_1^2+1)$ with the same \(\mu _2\)-action. 
For \(\tau\neq0\), the hypersurface upstairs is smooth. 
Moreover, the only fixed point of $\mu_2$ in the ambient affine space is the origin, which is absent from the fiber for $\tau\neq0$. Hence, the quotient fiber is smooth.

Similarly, the non-trivial endpoint of $C_3$ is $P_3=[0:0:0:1]$, which is of type  $\frac19(3,4,8)$.
Take $G_3$  the order three subgroup of $\mu_9$ fixing  the coordinate of weight $3$ acts on the other two transverse coordinates with weights $1,2$. We consider 
\[u=x_1^3,\qquad v=x_1x_2,\qquad w=x_2^3\]
satisfying $uw=v^3.$ The residual  $\mu_3$ acts by $(u,v,w,x_0) \mapsto (\zeta_3u, \zeta_3 v, \zeta_3^2 w, \zeta_3 x_0)$ with weights $(1,1,2,1)\pmod3.$ Thus
\[\frac19(3,4,8)\simeq\{uw=v^3\}/\mu _3.\]
On the chart $x_3=1$, $s_3=x_0^3+x_3$ becomes $x_0^3+1$. Consider
$\mathcal U_3: uw=v^3+\tau(x_0^3+1).$ For $\tau\neq0$, its index-one cover is smooth. The $\mu_3$-action has no fixed point on this fiber because all four coordinates have nonzero $\mu _3$-weights and the origin is absent. Thus, the quotient is smooth.
These are $\QQ$-Gorenstein deformations because they are equivariant deformations of the corresponding index-one covers.

On the two charts covering $C_3$, the smoothing coefficient is obtained from the single homogeneous section $x_0^3+x_3$. On the charts covering $C_4$, it comes from $x_1^2+x_2$. Therefore the local deformations agree on chart overlaps. 

It remains to check the vanishing hypothesis in the local-to-global theorem. We have
$-K_X=\mathcal O_X(3+4+8+9)=\mathcal O_X(24)$. Then Lemma~\ref{vanishing} implies
$H^1\!\left(X,\mathcal Ext^1(\Omega_X,\mathcal O_X)\right)=0$, because $24=3\cdot 8=4\cdot 6$. Hence, the compatible local smoothings glue to a global \(\QQ\)-Gorenstein smoothing.

Let $Y$ be a smooth general fiber. Since the deformation is $\QQ$-Gorenstein,
\[(-K_Y)^3=(-K_X)^3=\frac{24^3}{3\cdot4\cdot8\cdot9}=16.\]
Let $\mathcal L_0:=\mathcal O_X(12)$. Then $\mathcal L_0^{[2]}\simeq\mathcal O_X(24)\simeq\omega_X^{-1}$.
We claim that $\mathcal L_0$ extends through the smoothing.
Along the two singular curves, the generic stabilizers have orders $3$ and $4$. Since
$3| 12,\ 4| 12,$ the sheaf $\mathcal O_X(12)$ is locally Cartier at their generic points.
The only issue is at the endpoints of weights $8$ and $9$.
At the weight-$8$ point,
$\frac18(3,4,1)\simeq\{uw=v^4\}/\mu _2$, where $\mu _2$ acts by $-1$ on $u,v,w,x_1$. The class $12\pmod8$ equals $4$, so $\mathcal O_X(12)$ corresponds precisely to the nontrivial character of this residual $\mu_2$. In the smoothing
\[\mathcal U_8: uw=v^4+\tau(x_1^2+1),\]the same character defines a rank-one sheaf on the entire quotient family $\mathcal U_8/\mu_2$ which is $\QQ$-Gorenstein. Thus $\mathcal O_X(12)$ extends locally. At the weight-$9$ point,
$\frac19(3,4,8)\simeq\{uw=v^3\}/\mu _3$, with residual weights $(1,1,2,1)$. Since $12\equiv3\pmod9$, $\mathcal O_X(12)$ gives a character of the residual $\mu _3$, which extends across
\[uw=v^3+\tau(x_0^3+1).\]
Therefore $\mathcal O_X(12)$ extends over all the local smoothing charts.
Weighted projective space satisfies
\[H^1(X,\mathcal O_X)=H^2(X,\mathcal O_X)=0.\]
The obstruction to gluing the local extensions of a line bundle lies in $H^2(\mathcal O_X)$, while ambiguity lies in $H^1(\mathcal O_X)$. Thus the local extensions glue uniquely to a reflexive sheaf $\mathcal L$ on the global smoothing
$\pi:\mathcal X\longrightarrow\Delta$.

Let $Y=\mathcal X_t$ for $t\neq0$, and set $L=\mathcal L|_Y$. Because $Y$ is smooth, $L$ is a line bundle, and $-K_Y=2L.$ Moreover, $(-K_Y)^3=16,$ so $L^3=2.$ By the classification of Fano threefolds,  $Y\simeq V_2$ because $ Y$ has Fano index 2 and $(-K_Y)^3=16.$
\end{proof}

\begin{Prop}
The weighted projective space $X=\PP(3,14,18,49)$ has a $\QQ$-Gorenstein smoothing to $V_2$.
\end{Prop}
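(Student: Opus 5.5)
We follow the same strategy as for $\PP(3,4,8,9)$: construct compatible local $\QQ$-Gorenstein smoothings along the singular curves and at their endpoints, glue them using Lemma~\ref{vanishing}, and identify the general fibre by extending half of the anticanonical class.

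\textbf{Numerics and singular locus.} Write $\deg(x_0,x_1,x_2,x_3)=(3,14,18,49)$, so $S=84$ and $P=37044$. Then $S^3=592704=16P$, which is the volume equation for $V_2$. The nontrivial pairwise gcds are $\gcd(3,18)=3$, $\gcd(14,49)=7$ and $\gcd(14,18)=2$, giving three singular curves:
\begin{itemize}
\item $C_{A_2}=\{x_1=x_3=0\}\simeq\PP(3,18)$, transverse type $A_2$, since $3\mid 14+49$;
\item $C_{A_6}=\{x_0=x_2=0\}\simeq\PP(14,49)$, transverse type $A_6$, since $7\mid 3+18$;
\item $C_{A_1}=\{x_0=x_3=0\}\simeq\PP(14,18)$, transverse type $A_1$, since $2\mid 3+49$.
\end{itemize}
So every codimension-two singularity is $A$-type. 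Moreover $84=3\cdot28=14\cdot6$ lies in $\langle w_i,w_j\rangle_{\mathbb Z_{\ge0}}$ for each of the three curves. Hence Lemma~\ref{vanishing} gives $H^1(X,\mathcal Ext^1(\Omega_X,\mathcal O_X))=0$, and it suffices to produce compatible local $\QQ$-Gorenstein smoothings.

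\textbf{Local smoothings along the curves.} By Lemma~\ref{T^1}, the constant term $s_0$ of the transverse equation $uv=w^r+\dots+s_0$ along each curve is a section of a line bundle on the curve. We choose it as the restriction of one weighted homogeneous polynomial, namely a general binomial in the two coordinates of that curve. The smoothing-section condition (S) guarantees that such sections exist. For example, on $C_{A_1}$ we have $\delta=26$ and $52=7+5\cdot 9\in\langle 7,9\rangle$, so the monomial $x_1x_2^5$ is available. We take $s_0$ general with simple zeros away from the endpoints. Then, exactly as for $x_0^3+x_3$ and $x_1^2+x_2$ in the previous proposition, the generic points of all three curves are smoothed, including the points where $s_0$ vanishes. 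Because each coefficient is a single global homogeneous polynomial, the local deformations agree on chart overlaps.

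\textbf{Endpoints (main obstacle).} The endpoints are $P_0$, $P_1$, $P_2$ and $P_3$. The points $P_1=[0:1:0:0]$ (where $C_{A_6}$ and $C_{A_1}$ meet) and $P_2$ (where $C_{A_2}$ and $C_{A_1}$ meet) are the delicate ones. At each endpoint we first check that the obstruction of Corollary~\ref{non-smoothing} does not occur, i.e.\ $e\equiv 0$, $c$ or $d\pmod m$. At $P_3$ and $P_0$, with only one curve through them, we write the singularity as $\{uw=v^r\}/\mu_m$ using invariant monomials in the transverse coordinates, as was done for $\tfrac19(3,4,8)$. The deformation is then $uw=v^r+\tau s_0$, with $s_0$ nonvanishing at the point. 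We check that the upstairs fibre is smooth and the group action is free on it. At $P_1$ and $P_2$ we use the full index-one cover $\frac1{14}(3,18,49)$, respectively $\frac1{18}(3,14,49)$. On this cover the two transverse $A$-singularities become a small set of hypersurface or codimension-two equations, and we take an equivariant deformation whose restrictions to both curves are the chosen $s_0$'s. The first thing to try is choosing the binomials so that a term of weight $e$ that is nonvanishing at the point (a pure power of the point's coordinate) appears. This makes the fibre for $\tau\neq0$ miss the fixed origin, so the quotient is smooth by the freeness/purity argument. Compatibility of the two curve smoothings at $P_1$ and $P_2$ is the step I expect to require real work.

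\textbf{Identifying the fibre.} Let $\mathcal L_0=\mathcal O_X(42)$, so that $\mathcal L_0^{[2]}=\omega_X^{-1}$. Since $3$, $7$ and $2$ all divide $42$, $\mathcal L_0$ is Cartier at the generic points of the three curves. At each endpoint it corresponds to a character of the residual group and therefore extends across the equivariant local smoothings. The vanishing $H^1(\mathcal O_X)=H^2(\mathcal O_X)=0$ lets these local extensions glue to a sheaf $\mathcal L$ on the total space. On a general fibre $Y$ we get $-K_Y=2L$ with $L^3=42^3/37044=2$. Hence $Y$ has Fano index $2$ and degree $2$, so $Y\simeq V_2$ by the classification.
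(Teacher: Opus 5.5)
Your numerics, the $A$-type checks, the verification $84\in\langle w_i,w_j\rangle$ feeding Lemma~\ref{vanishing}, and the identification of the fibre through $\mathcal O_X(42)$ are correct and parallel the paper. The gap is the step you defer, and it is the core of the proof: the local $\QQ$-Gorenstein smoothings at $P_1$ and $P_2$ compatible with the curve smoothings.

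\textbf{The curve sections cannot be chosen as you describe.} On $C_{A_1}\simeq\PP(14,18)$ the bundle is $(\det N)^{\otimes 2}=\mathcal O(104)$. The equation $7a+9b=52$ has the unique solution $(1,5)$, so $s_0$ is forced to be a multiple of $x_1x_2^5$. This section vanishes at both $P_1$ and $P_2$, so there is no ``general section with simple zeros away from the endpoints''. On $C_{A_6}$ the sections of $\mathcal O(147)$ are spanned by $x_1^7x_3$ and $x_3^3$, so $s_0$ vanishes at $P_1$. On $C_{A_2}$ every section of $\mathcal O(189)$ is divisible by $x_0^3$, so $s_0$ vanishes at $P_2$; only $s_1=x_2^7$ is nonzero there. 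Hence nothing at $P_1$ or $P_2$ follows from the curve sections, and everything must come from local analysis.

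\textbf{Your local recipe does not work at $P_1$.} The germ $\frac1{14}(3,4,7)$ is Gorenstein. Written as $(\{UV=W^7\}\times\mathbb A^1_{x_3})/\mu_2$ with weights $(1,0,1,1)$, every odd-weight deformation term vanishes on the fixed $V$-axis. So no equivariant deformation misses the fixed locus. What works, as in the paper, is to smooth the $A_6$ branch by the term $x_3$ (the restriction of $x_1^7x_3$). This leaves $\mathbb A^1\times A_1$, and one then smooths that $A_1$.

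\textbf{Your local recipe does not work at $P_2$ either.} The index-one cover of $\frac1{18}(3,14,13)$ is $\frac16(1,2,3)$, the anticanonical cone over $\PP(1,2,3)$. This is a codimension-four germ in $\mathbb A^7$, not a hypersurface or codimension-two germ, and it has two smoothing components, Tom and Jerry. Moreover, your heuristic of using a coefficient nonvanishing at the point means smoothing $C_{A_2}$ via $s_1=x_2^7$. That is exactly the paper's partial smoothing $X_{63}=\{uw=v^3+x_0^{63}+x_2^7v\}$. There $P_2$ becomes $\frac16(1,2,1)$, and the endpoint criterion of Section~2 (with $m=3$, $e\equiv2$, $c\equiv d\equiv1$) shows this germ is not $\QQ$-Gorenstein smoothable. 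So the natural choice is a dead end.

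\textbf{The missing idea} is the Brown--Reid--Stevens Jerry component. One takes a one-parameter curve $\theta=\mu=s$ in it. Its parameters have residual $\mu_3$-weight zero, so the deformation is $\mu_3$-equivariant, and its general fibre is smooth. One then checks from the Jerry equations that $\mu_3$ acts freely on that fibre: the fixed points would need $\gamma\theta=0$ and $\mu\theta^2=0$, which is impossible for $s\neq0$. Without this endpoint analysis, the gluing via Lemma~\ref{vanishing} has nothing to glue.
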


\begin{proof}
We have the following gcd graph
\[3\xleftrightarrow{\ 3\ }18
\xleftrightarrow{\ 2\ }14
\xleftrightarrow{\ 7\ }49.\]
The transversal singularities are of type 
$A_2,  A_1$ and $A_6$ respectively. Moreover,
$-K_X=\mathcal O_X(84)$ and $(-K_X)^3=16.$

We set coordinate $x_0,x_1,x_2,x_3$ with weights $3,14,18,49$ respectively. We first consider an embedding $X \hookrightarrow \mathbb{P}(1,6,14,21,49)_{x_0,x_2,u,v,w}$ by introducing $u=x_1^3, v=x_1x_3, w=x_3^3$ and dividing all the weights by $3$. This leads the following partial smoothing

\[\mathcal X=\{uw-v^3-t\bigl(x_0^{63}+x_2^7v\bigr)=0\}
\subset\PP(1,6,14,21,49)\times\mathbb A^1_t.\]

Clearly, the central fiber is $X$. For $t \ne 0$, one sees that the  general fiber
\[X_{63}=\{uw=v^3+x_0^{63}+x_2^7v\} \subset \mathbb{P}(1,6,14,21,49)\]
is quasismooth. Thus $\mathcal{X}$ completely smooths the 1-dimenaional $A_2$ singularities along the curve $C_3=\{x_0=x_2=0\}$.
Also, $K_{X_{63}}=\mathcal O_{X_{63}}(-28),$ so this is a $\QQ$-Gorenstein partial smoothing.

The remaining $A_1$- and $A_6$-curves meet at the original $x_1$-coordinate point, which is now the $u$-coordinate point, denotes $P_u$. More precisely: 
\begin{itemize}
    \item The original $A_1$-curve $C_2=\{x_0=x_3=0\}$ goes to $\{x_0=v=w=0\}\subset X_{63}$ with two end points $(P_u\in X_{63})\simeq\frac{1}{14}(1,6,7)$ and $(P_{x_2}\in X_{63})\simeq\frac{1}{6}(1,2,1)$.
    \item The original $A_6$-curve $C_7=\{x_0=x_2=0\}$ goes to $\{x_0=x_2=0,\ uw=v^3\}\subset X_{63}$ with two end points $(P_u\in X_{63})\simeq\frac{1}{14}(1,6,7)$ and $(P_w\in X_{63})\simeq\frac{1}{49}(1,6,21)$. 
\end{itemize}

The germ $(P_u\in X_{63})\simeq\frac1{14}(1,6,7)$ is locally $\QQ$-Gorenstein smoothable. To see this, first quotient by the subgroup $G_7 \subset\mu _{14}$  of order $7$. The index-one presentation is $\{uv=w^7\}\big/\mu _2,$
where the $\mu _2$-weights are
$(u,v,w,z)=(1,0,1,1).$ The equivariant deformation
$uv=w^7+sz$ is invariant as a zero locus. For $s\ne0$, eliminate $Z$; the remaining germ is
$\frac12(1,0,1)\simeq\mathbb A^1\times A_1.$ The remaining $A_1$ has its standard smoothing. Thus this endpoint has a compatible smoothing of its $A_1$- and $A_6$-branches.
Consider next the germ  is
$P_w \in X_{63} \cong \frac1{49}(1,6,21).$ Its index-one cover is
$\mathbb A^1\times A_6 \cong \{uv=w^7\},$
with the $\mu _7$-weights
$(u,v,w,z)=(1,6,1,3).$ The deformation
$uv=w^7+\tau$ has smooth cover for $\tau\ne0$, and the action is free. Hence this endpoint is also $\QQ$-Gorenstein smoothable.

Notice that at the $y$-coordinate point $P_{x_2} \in X_{63}\simeq\frac16(1,2,1)$ is not suitable for completing the smoothing in a second step. We therefore return to $X$ and the original germ $(P_{x_2} \in X)=\frac1{18}(3,14,13)$ and construct a simultaneous smoothing of its intersecting $A_2$ and $A_1$-branches. At $P_{x_2}=[0:0:1:0]$, the germ is
$(P_y\in X)
\simeq\frac1{18}(3,14,13).$ Its canonical character is $30\equiv12\pmod{18}$,
so its canonical index is $3$. The index-one cover is
$\widetilde Q=\frac16(3,2,1)$, which is 
 the anticanonical affine cone over $\PP(3,2,1)$. The residual group is $\mu _3$, and
$(P_{x_2} \in X)=\widetilde Q/\mu _3.$
Write the coordinates on the original \(\mathbb A^3\) defining \(\widetilde Q\) as \(u,v,w\), of \(\mu _6\)-weights \(1,2,3\). The anticanonical invariant coordinates used by Brown-Reid-Stevens \cite{BRS} are
\[\alpha=u^6,\quad \beta=u^4v,\quad \gamma=u^2v^2,\quad
\xi=v^3,\quad \delta=u^3w,\quad \epsilon=uvw,\quad \eta=w^2.
\]The residual $\mu _3$-weights are
$\operatorname{wt}_{\mu _3}(\alpha,\beta,\gamma,\xi,\delta,\epsilon,\eta)
=(1,2,0,1,1,2,1).$ Brown-Reid-Stevens \cite{BRS} describe the Jerry smoothing component of $\widetilde Q$ by introducing deformation variables $\theta,\mu,\nu,\ldots$. The above residual action extends to that family with
\[\operatorname{wt}_{\mu _3}(\theta)=
\operatorname{wt}_{\mu _3}(\mu)=0,
\qquad
\operatorname{wt}_{\mu _3}(\nu)=2.\]
This can be checked directly from their Jerry matrix: the entry $x_1+\mu \eta$ has weight 1, while $\theta+\lambda\xi$ has weight 0.
Take the one-parameter small curve in the Jerry component
\[\theta=s,\qquad \mu=s,\qquad
\nu=\lambda=x_2=x_1=0.\]Because $\theta$ and $\mu$ have weight zero, this is a fibrewise $\mu _3$-equivariant deformation over the ordinary base $\mathbb A^1_s$.
For $s\ne0$, the relevant Jerry cubic is, up to signs, $\Phi(\epsilon,\eta)=\epsilon^3-s\eta^3.$ It has three distinct roots. Hence the general fiber of the index-one-cover deformation is smooth by the Jerry construction in Brown-Reid-Stevens \cite{BRS}

It remains to show that the $\mu _3$-action is free. A point fixed by a nonidentity element must satisfy
\[\alpha=\beta=\xi=
\delta=\epsilon=\eta=0,\] with only the weight-zero coordinate $\gamma$ possibly nonzero. Among the Jerry equations are, up to sign conventions,
$\beta \eta+ \gamma \theta-\delta \epsilon=0, \alpha \epsilon-\beta \delta-\mu \theta^2=0$. Since $\theta=s\ne0$, the first gives $\gamma=0$. The second then gives $0=\mu \theta^2=s^3$, a contradiction. Thus, the residual $\mu_3$-action is free on the general fibre.

Consequently, $\widetilde{\mathcal Q}/\mu _3\longrightarrow\mathbb A^1_s$ is a local $\QQ$-Gorenstein smoothing of $\frac1{18}(3,14,13)$, compatible with the $A_2$ and $A_1$-branches through $P_y$. This shows that the $A_2$-curve can be smoothed compatibly.

The smoothing-section conditions (S) hold: $52=5\cdot9+1\cdot7$ for the middle $A_1$-curve, and
$21=3\cdot7$ for the $A_6$-curve, so there are anticanonical sections nonzero generically along all three singular curves. The explicit family supplies compatibility along the original $A_2$-curve, while the above argument gives compatibility where the remaining $A_1$ and $A_6$-curves meet. 
$-K_X=\mathcal O_X(84)$, and the singular curves have transverse types $A_2,A_1,A_6$. Moreover,
\[84\in\langle3,18\rangle,\qquad
84\in\langle18,14\rangle,\qquad
84\in\langle14,49\rangle,\]
then by Lemma~\ref{vanishing} the local-to-global theorem for weighted projective threefolds with $A$-type codimension-two singularities therefore produces a global $\QQ$-Gorenstein smoothing
$X\rightsquigarrow Y$ with $Y$ smooth.

Set $L_0:=\mathcal O_X(42)$. Then $L_0^{[2]}\simeq\omega_X^{-1}$.
$L_0$ is not a line bundle on $X$, but it is a line bundle on the canonical-covering stack of $X$.

For a cyclic quotient $P\in X\simeq \frac1r(a,b,c)$, let
$q\equiv a+b+c\pmod r,\ e=\gcd(r,q).$ The index-one cover is obtained by quotienting by the kernel of the canonical character. This kernel has order $e$.
The reflexive sheaf $\mathcal O_X(d)$ pulls back to a line bundle on the index-one cover precisely when $e\mid d.$
For $d=42$, the four coordinate points give:
\[\begin{array}{c|c|c|c}
\text{point}&r&q& e=\gcd(r,q)\\ \hline
P_3&3&14+18+49\equiv0&3\\
P_{14}&14&3+18+49\equiv0&14\\
P_{18}&18&3+14+49\equiv12&6\\
P_{49}&49&3+14+18\equiv35&7
\end{array}\]
All four numbers
$3,\ 14,\ 6,\ 7$ divide $42$. Thus $L_0$ becomes invertible on every local canonical cover.

Therefore, if
$p_0:\mathfrak X_0\longrightarrow X$ is the canonical-covering stack, then
$\mathscr L_0:=p_0^{[*]}\mathcal O_X(42)$ is a line bundle on $\mathfrak X_0$.
Let $\pi:\mathcal X\longrightarrow\Delta$ be a $\QQ$-Gorenstein smoothing. Such a deformation determines a deformation
$\mathfrak X\longrightarrow\Delta$ of the canonical-covering stack; this is the stack interpretation of $\QQ$-Gorenstein deformations, as developed in \cite{Hac04}.
Hence $H^i(\mathfrak X_0,\mathcal O_{\mathfrak X_0})
 \simeq H^i(X,\mathcal O_X).$ Weighted projective space is arithmetically Cohen–Macaulay, so
$H^1(X,\mathcal O_X)=H^2(X,\mathcal O_X)=0.$ It follows that $\mathscr L_0$ extends uniquely through every infinitesimal thickening. After shrinking the disc, formal existence and algebraization give a line bundle
$\mathscr L\in\operatorname{Pic}(\mathfrak X)$ extending $\mathscr L_0$.
Moreover, on the central fiber,
$\mathscr L_0^{\otimes2}\simeq\omega_{\mathfrak X_0}^{-1}.$
Both $\mathscr L^{\otimes2}$ and $\omega_{\mathfrak X/\Delta}^{-1}$ extend this same line bundle. The uniqueness coming from $H^1(\mathcal O)=0$ therefore gives
$\mathscr L^{\otimes2}\simeq\omega_{\mathfrak X/\Delta}^{-1}.$

On a smooth fiber $Y$, the canonical-covering stack is simply $Y$. Thus
$H:=\mathscr L|_Y$ is a line bundle satisfying
$-K_Y=2H.$ It is ample  and $H^3=\frac{(-K_Y)^3}{8}=\frac{16}{8}=2.$
By the classification of Fano threefolds,  $Y\simeq V_2$ because $Y$ has Fano index 2 and $(-K_Y)^3=16$.
\end{proof}

\begin{Prop}\label{Y6}
The weighted projective space $X=\PP(5,6,9,10)$ admits a $\QQ$-Gorenstein smoothing to $Y_6$ a Fano threefold with $g=6$.
\end{Prop}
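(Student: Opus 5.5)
The plan has three parts: produce the smoothing locally-to-globally, identify the smooth fibre numerically, and control the Picard number with Lemma~\ref{rho1} and Lemma~\ref{monodromy}.

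\textbf{Singular locus and vanishing.} Here $S=30$, so $-K_X=\mathcal O_X(30)$ and $(-K_X)^3=30^3/2700=10=2g-2$ with $g=6$. The nontrivial gcds are $\gcd(5,10)=5$, $\gcd(6,9)=3$ and $\gcd(6,10)=2$. So the singular curves are:
\begin{itemize}
\item $C_{03}\simeq\PP(5,10)$, transverse $A_4$, since $5\mid 6+9$;
\item $C_{12}\simeq\PP(6,9)$, transverse $A_2$, since $3\mid 5+10$;
\item $C_{13}\simeq\PP(6,10)$, transverse $A_1$, since $2\mid 5+9$.
\end{itemize}
All codimension-two singularities are therefore of $A$-type. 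Since $30\in\langle5,10\rangle$, $30\in\langle6,9\rangle$ and $30\in\langle6,10\rangle$, Lemma~\ref{vanishing} gives $H^1(X,\mathcal Ext^1(\Omega_X,\mathcal O_X))=0$. It then suffices to produce compatible local $\QQ$-Gorenstein smoothings.

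\textbf{Local smoothings along the curves.} Along each curve I take the smoothing coefficients from Lemma~\ref{T^1} as global weighted sections on the curve, chosen with simple zeros. The plan is:
\begin{itemize}
\item on $C_{03}$, take a general perturbation $uv=z^5+\sum_j s_j z^j$ with the $s_j$ general sections, including sections such as $x_0^2+x_3$ in the constant term;
\item on $C_{12}$, take $uv=z^3+\tau(x_1^3+x_2^2)$;
\item on $C_{13}$, take $uv=z^2+\tau(x_1^5+x_3^3)$.
\end{itemize}

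\textbf{Endpoints.} The coordinate points are $\frac15(1,4,0)$ (generic along the $A_4$ curve, so harmless), $\frac16(5,9,10)$ (where $A_2$ and $A_1$ meet), $\frac19(5,6,10)$ (an endpoint of the $A_2$ curve) and $\frac1{10}(5,6,9)$ (where $A_4$ and $A_1$ meet). For each one I first check that Corollary~\ref{non-smoothing} gives no obstruction. I then construct, as for $\PP(3,4,8,9)$ and $\PP(3,14,18,49)$, an equivariant deformation of the index-one cover that restricts to the chosen curve smoothings and on which the residual cyclic group acts freely on general fibres. At the two points where two curves meet, I expect to need a Brown--Reid--Stevens type model of the index-one cover, as was done at $\frac1{18}(3,14,13)$. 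This endpoint analysis, especially at $\frac1{10}(5,6,9)$, is the step I expect to be the main obstacle.

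An alternative to the local-to-global route would be a global model. The degree-$15$ monomials $x_0^3$, $x_0x_3$ and $x_1x_2$ suggest embedding $X$ with weights divided by $15$ and deforming it directly into a Gushel--Mukai presentation. I would try this only if an endpoint resists the local treatment.

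\textbf{Identifying the fibre.} Let $Y$ be the smooth fibre. It is Fano with $(-K_Y)^3=10$. If $\rho(Y)=1$ and $-K_Y=rH$, then $r^3H^3=10$ forces $r=1$. Hence $Y$ is a prime Fano threefold with $2g-2=10$, that is, $Y=Y_6$.

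\textbf{Picard number.} To show $\rho(Y)=1$ I use Lemma~\ref{rho1}. The group $\mathbb H^2(X,R\phi\QQ)=H^0(X,R^2\phi\QQ)$ is supported on the three curves. A global section restricts injectively to the generic parts $C^\circ$ of the curves, because the endpoint fibres have no extra invariant classes after the smoothings above. So it suffices that each local system has no invariants.
\begin{itemize}
\item For the $A_1$ and $A_2$ curves, the discriminant of the single-parameter smoothing has simple zeros. Hence the local monodromy is a transposition, and, for $A_2$, also a $3$-cycle of the roots of $z^3+\tau s$ around the zeros of $s$. This makes the monodromy group transitive.
\item For the $A_4$ curve, the general coefficients $s_j$ give full $S_5$ monodromy.
\end{itemize}
Lemma~\ref{monodromy} then gives vanishing in each case, so $\rho(Y)=1$.
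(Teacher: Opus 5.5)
\textbf{The gap.} The missing step is the one you flag and leave open: local $\QQ$-Gorenstein smoothings at the points where singular curves meet, compatible with the curve data.

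Lemma~\ref{vanishing} removes the obstruction space $H^1(X,\mathcal Ext^1(\Omega_X,\mathcal O_X))$. The local-to-global argument still needs actual smoothings of the coordinate-point germs, and you do not produce them. Your intended device, an equivariant deformation of the index-one cover with free residual action, does not help here. Both meeting points are Gorenstein, since $5+9+10\equiv0\pmod 6$ and $5+6+9\equiv0\pmod{10}$, so the index-one cover is the germ itself.
\begin{itemize}
\item At $P_1$ this is harmless once you notice that $\frac16(5,9,10)\cong\frac16(1,2,3)$ is exactly the Brown--Reid--Stevens cone.
\item At $P_3$, the germ $\frac1{10}(5,6,9)$ is a different Gorenstein toric singularity. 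Its polygon is a triangle with edges of lattice length $5,2,1$ and two interior points, and you offer no model for it.
\end{itemize}

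\textbf{The curve data are also wrong.} By $\mathcal T^1_{\mathrm{QG}}|_C\simeq\bigoplus_j(\det N_{C/X})^{\otimes(r-j)}$, the required degrees are:
\begin{itemize}
\item along $C_{12}$ (normal weights $5,10$), $30$ and $45$;
\item along $C_{13}$ (normal weights $5,9$), $28$;
\item along $C_{03}$ (normal weights $6,9$), $75$ for the constant term.
\end{itemize}
Your sections $x_1^3+x_2^2$, $x_1^5+x_3^3$ and $x_0^2+x_3$ have degrees $18$, $30$ and $10$. With the correct degrees, the constant term on $C_{03}$ is divisible by $x_0$, and the one on $C_{12}$ is divisible by $x_2$. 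The only section on $C_{13}$ is $x_1^3x_3$. So every constant coefficient vanishes at the meeting points, and on $C_{13}$ it has no interior zeros at all.

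The $\PP(3,4,8,9)$-style picture is therefore unavailable: there, constant terms are nonvanishing at the endpoints and have simple interior zeros that drive the monodromy argument. Here everything hinges on genuinely three-dimensional smoothings at $P_1$ and $P_3$. This includes the endpoint injectivity you need for the Picard-number step.

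\textbf{How the paper handles it.} The paper avoids this with a preliminary global partial smoothing.
\begin{itemize}
\item Take $\mu_5$-invariants of the transverse coordinates of the $A_4$-curve, $u=x_1^5$, $v=x_1x_2$, $w=x_2^5$, and set $x=x_0$, $y=x_3$. This gives $X\cong\{uw=v^5\}\subset\PP(1,2,3,6,9)$.
\item The family $uw-v^5-s(x^{15}+xy^7)=0$ smooths the whole $A_4$-curve. Its $A_4$ constant term is $x_0(x_0^{14}+x_3^7)$, of the correct degree $75$.
\item The monomial $xy^7$ lets one eliminate $x$ at the weight-$10$ point. That point becomes $\frac12(1,0,1)$, a general point of the $A_1$-curve.
\item On the quasismooth fibre $X_{15}$, the only special points left are $\frac16(1,2,3)$, handled by Tom/Jerry, and $\frac19(1,2,3)$, handled by the equivariant smoothing $uv=w^3+\tau$ with free $\mu_3$-action.
\item Globalization uses $H^2(T_{X_{15}})=0$ and $H^1(\mathcal T^1_{\mathrm{QG}})=0$.
\item The vanishing-cycle argument is run on $X_{15}$. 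There $H^2(X_{15},\QQ)\simeq\QQ$, and the $A_1$-section $y(\alpha y^3+\beta u)$ does have an interior simple zero.
\end{itemize}
Your fallback of a global model was the right instinct. This degree-$15$ hypersurface deformation is what disposes of $\frac1{10}(5,6,9)$.
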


\begin{proof}
Let $X=\PP(5,6,9,10).$ Then
$-K_X=\mathcal O_X(30)$ and $(-K_X)^3
=\frac{30^3}{5\cdot6\cdot9\cdot10}=10.$ Its nontrivial gcd graph, after reordering the weights as
$(5,10,6,9)$, is
\[5\xleftrightarrow{\ 5\ }10
\xleftrightarrow{\ 2\ }6
\xleftrightarrow{\ 3\ }9.\]
The corresponding transverse singularities are respectively
$A_4,\ A_1,\ A_2.$
Use coordinates $[x:y:z:t]$, $\deg(x,y,z,t)=(5,10,6,9).$ Take invariants under the subgroup $\mu _5$. Set
\[u=z^5,\qquad v=zt,\qquad W=t^5.\]
After dividing all invariant degrees by $5$,
$X\simeq\{uw=v^5\}\subset\PP(1,2,3,6,9).$
Consider the degree-$15$ family
\[\mathcal X=
\left\{uw-v^5-s(x^{15}+xy^7)=0\right\}
\subset\PP(1,2,3,6,9)\times\mathbb A^1_s \tag{1}\]
with central fiber  $ \cong \PP(5,6,9,10)$.
For $s\ne0$, the fiber is defined by
\[X_{15}:=\{uw-v^5-x^{15}-xy^7=0\}\]
which is quasismooth. In particular, (1) completely smooths the $A_4$-curve corresponding to $\gcd(5,10)=5$.
Moreover, $K_{X_{15}}
=\mathcal O_{X_{15}}(-6)$,
so (1) is a $\QQ$-Gorenstein partial smoothing.

There remain two singular curves.
First,
\[C_2=\{x=v=w=0\}\simeq\PP(2,6),\]has generic transverse type $A_1$ with two end points $(P_u\in X_{15})\simeq\frac{1}{6}(1,2,3)$ and $(P_y\in X_{15})\simeq\frac{1}{2}(1,0,1)$.

Second,
\[C_3=\{x=y=0,\ uw=v^5\}\subset\PP(3,6,9)\]has generic transverse type $A_2$ with two end points $(P_u\in X_{15})\simeq\frac{1}{6}(1,2,3)$ and $(P_w\in X_{15})\simeq\frac{1}{9}(1,2,3)$. After rigidifying the generic $\mu_3$-stabilizer,
$C_3\simeq\PP(2,3),$ via
\[[z:t]\longmapsto[u:v:w]=[z^5:zt:t^5].\]The curves $C_2$ and $C_3$ meet only at the coordinate point $P_U$.
At that point, $\partial F/\partial w=u\ne0$, so $w$ can be eliminated. Hence $(P_u \in X_{15})\simeq\frac1{6}(1,2,3)$. This is precisely the anticanonical affine cone over $\PP(1,2,3)$.
Brown-Reid-Stevens \cite{BRS} prove that this cone has two genuine smoothing components, described by the Tom and Jerry unprojection. Thus the intersecting $A_1$ and $A_2$-branches in $P_u$  admit compatible simultaneous local smoothings. 

The other special points are also locally smoothable:
At $P_y$, the germ is $\frac12(1,0,1)\simeq\mathbb A^1\times A_1.$ At $P_w$, the germ is $\frac19(1,2,3).$ Its index-one cover is $\mathbb A^1\times A_2=\{uv=w^3\}.$
The deformation  $uv=w^3+\tau$ is invariant under the $\mu _3$-action. For $\tau\ne0$, the cover is smooth and the action is free, so this gives a local $\QQ$-Gorenstein smoothing.
Thus all local singularities of $X_{15}$ have compatible $\QQ$-Gorenstein smoothing directions.

Now, we need to show two vanishings: $H^2(X_{15},T_{X_{15}})=0$ and $H^1(X_{15},\mathcal T^1_{\mathrm{QG}})=0$.

For the first, the homogeneous coordinate ring of \(X_{15}\) is Cohen–Macaulay, so
\[H^1(X_{15},\mathcal O_{X_{15}}(m))
=H^2(X_{15},\mathcal O_{X_{15}}(m))=0\]for every $m$. The weighted Euler sequence and the hypersurface normal sequence
\[0\to T_{X_{15}}\to T_{\PP(1,2,3,6,9)}|_{X_{15}}
\to\mathcal O_{X_{15}}(15)\to0\]then give $H^2(X_{15},T_{X_{15}})=0$. 

For $\mathcal T^1_{\mathrm{QG}}$, away from $P_u$ its torsion-free restrictions are
\[\mathcal T^1_{\mathrm{QG}}|_{C_2}
\simeq\mathcal O_{\PP(1,3)}(4) \qquad \text{and} \qquad
\mathcal T^1_{\mathrm{QG}}|_{C_3}
\simeq\mathcal O_{\PP(2,3)}(10)
\oplus\mathcal O_{\PP(2,3)}(15).\]
Along $C_2$, the transverse coordinates have weights $1,3$, and the $A_1$-invariant equation has degree $2(1+3)=8.$ After quotienting by $\mu _2$, this becomes $\mathcal O(4)$. Along $C_3$, the transverse coordinates have weights $1,2$. For $A_2$, the two deformation coefficients have degrees
$3(1+2)=9,\ 2(1+2)=6.$ Since $\mathcal O_{X_{15}}(3)|_{C_3}
  \simeq\mathcal O_{\PP(2,3)}(5)$, these become $\mathcal O(15)$ and $\mathcal O(10)$.
All the line bundles in the above have vanishing $H^1$. The local Tom/Jerry description of $\frac16(1,2,3)$ \cite{BRS} shows that the correction at $P_u$ is zero-dimensional and that the endpoint evaluation maps from the two positive $C_3$-summands are surjective. Consequently,
$H^1(X_{15},\mathcal T^1_{\mathrm{QG}})=0.$ Therefore, the actual local smoothing components give a global $\QQ$-Gorenstein smoothing of $X_{15}$.

We now have a chain of smoothings
\[\PP(5,6,9,10)
\rightsquigarrow X_{15}
\rightsquigarrow Y,\]where $Y$ is smooth and $(-K_Y)^3=10$.

\medskip
\n {\bf Claim. } $\rho(Y)=1$.
\begin{proof}[Proof of the Claim] Let
$Z=X_{15}=\{uw=v^5+x^{15}+xy^7\}\subset\PP(1,2,3,6,9).$ Its singular locus is $C_2\cup C_3,$ where
$C_2\simeq\PP(1,3),\ C_3\simeq\PP(2,3),$ with transverse types $A_1$ and $A_2$. They meet at
$P_u, \ (P_u\in Z)\simeq\frac16(1,2,3).$ Let
$\pi:\mathcal Z\to\Delta$ be a sufficiently general global smoothing constructed from the prescribed local smoothing directions.
Weighted Lefschetz gives $H^2(Z,\QQ)\simeq\QQ.$ By Lemma~\ref{rho1}, it is enough to prove $\mathbb H^2(Z,R\phi_\pi\QQ)=0$.

Thus we must exclude global sections of the degree-two vanishing-cycle sheaf.
On $C_2$ we have $\mathcal T^1_{\mathrm{QG}}|_{C_2}
\simeq\mathcal O_{\PP(1,3)}(4).$ If $[y:u]$ have weights $(1,3)$, a general section is
\[q_4=\alpha y^4+\beta yu
=y(\alpha y^3+\beta u),
\qquad \alpha\beta\neq0.\]
It necessarily vanishes at $P_u=[0:1]$, where the special Brown-Reid-Stevens calculation \cite{BRS} is used. Away from $P_u$, it has one additional simple zero
$\alpha y^3+\beta u=0.$ By Lemma~\ref{monodromy},
$A_{1,\QQ}^{\mathrm{mon}}=0$.
Along $C_3$, $AB=C^3+s_1C+s_0,$ with
$s_1\in H^0(C_3,\mathcal O(10)),\
s_0\in H^0(C_3,\mathcal O(15)).$ At the weight $9$ endpoint $P_W$, the index-one cover is
$\{AB=C^3\}\times\mathbb A^1,$ and the selected smoothing is
$AB=C^3+\tau.$ The $\mu_3$-action acts on the $A_2$ root lattice as a $3$-cycle. Again by Lemma~\ref{monodromy},
$A_{2,\QQ}^{\mu_3}=0.$ For general $s_1,s_0$, one can also see this from the cubic root monodromy: its group is $S_3$, and
$A_{2,\QQ}^{S_3}=0$.

Locally,
$(P_u\in Z)\simeq Q:=\frac16(1,2,3),$ the anticanonical affine cone over
$S=\PP(1,2,3).$ Brown-Reid-Stevens \cite{BRS} construct two smoothing components of $Q$: Tom and Jerry. Their paper also identifies the corresponding $A_1$- and $A_2$-Weyl symmetries. The map from the stalk at $P_u$ to the nearby branch stalks is injective: \[\mathcal H^2(R\phi_\pi\QQ)_{P_u}\longrightarrow A_{1,\QQ}\oplus A_{2,\QQ}\simeq
\QQ\oplus\QQ^2.\]

Let $\sigma\in H^0\!\left(Z,\mathcal H^2(R\phi_\pi\QQ)\right).$ Its restriction to $C_2$ is zero because of the $A_1$ reflection monodromy. Its restriction to $C_3$ is zero because the residual $\mu_3$-monodromy, or equivalently the global $S_3$-monodromy, has no invariant vectors.
Therefore $\sigma$ can only be supported at $P_u$. But the injectivity of the local generization map at $P_u$ implies
$\sigma_{P_u}=0.$ Hence
$H^0\!\left(Z,\mathcal H^2(R\phi_\pi\QQ)\right)=0$ and therefore
$\mathbb H^2(Z,R\phi_\pi\QQ)=0.$
\end{proof}
Finally,
$(-K_Y)^3=10$. It follows easily that $Y$ has Fano index $1$, and
$g(Y)=\frac{(-K_Y)^3}{2}+1=6.$ Therefore $Y\simeq Y_6$.
\end{proof}

\begin{Rmk}\label{P(3)}
We know $\PP(3,12,20,25)$ is a numerical candidate for $Y_7$.
Let $X=\PP(3,12,20,25)$ with coordinate $[x:y:z:t]$. Its gcd graph is
$3\ \xleftrightarrow{\ 3\ }\ 12
\ \xleftrightarrow{\ 4\ }\ 20
\ \xleftrightarrow{\ 5\ }\ 25,$ and the middle curve is
\[C_{4}=\{x=t=0\}\simeq \PP(3,5),\]with transversal singularity $A_3$.

Take invariants for the group $\mu _4$. Put
$u=x^4,\ v=xt,\ w=t^4.$ After dividing the degrees by $4$,
$\deg(y,z,u,v,w)=(3,5,3,7,25)$. 
Consider a flat $\QQ$-Gorenstein family,
\[\mathcal X=\left\{uw-v^4-s\bigl(yz^5+y^7v\bigr)=0\right\}
\subset\PP(3,5,3,7,25)\times\mathbb A^1_s.\]

The central fiber is 
\[X \simeq \{uw=v^4\}
\subset \PP(3,5,3,7,25).\]
Indeed, one can check that $X_s$ is quasismooth for $s\ne0$. In particular, the middle $A_3$-curve can be smoothed completely.

Let $X_{28}:=X_{s=1}$. One does have
$H^2(X_{28},T_{X_{28}})=0.$ However, the $A_4$-curve contributes
$\dim H^1(X_{28},\mathcal T^1_{\mathrm{QG}})=6;$

The $A_2$-curve is
$L_3=\{z=v=w=0\}\simeq\PP(3,3).$ Along $L_3$, the linearization of the defining equation is
$u\,dw-y^7\,dv.$ Hence
\[0\to N_{L_3/X_{28}}\to\mathcal O(5)\oplus\mathcal O(7)\oplus\mathcal O(25)\xrightarrow{(0,-y^7,u)}\mathcal O(28)\to0.\]
The $\mathcal O(5)$-summand lies in the kernel, while
$\ker\bigl(\mathcal O(7)\oplus\mathcal O(25)\to\mathcal O(28)\bigr)
\simeq\mathcal O(4).$ Thus
\[N_{L_3/X_{28}}\simeq\mathcal O(5)\oplus\mathcal O(4),
\qquad\det N_{L_3/X_{28}}\simeq\mathcal O(9).\]
Since $\mathcal O_{\PP(3,3)}(3)\simeq\mathcal O_{\PP^1}(1)$, Lemma~\ref{T^1} gives
$\mathcal T^1_{\mathrm{QG}}|_{L_3}\simeq
\mathcal O_{\PP^1}(9)\oplus
\mathcal O_{\PP^1}(6)$, so $H^1(L_3,\mathcal T^1_{\mathrm{QG}}|_{L_3})=0.$

The $A_4$-curve is $L_5=\{y=u=v=0\}\simeq\PP(5,25).$ Along $L_5$, the linear part of the equation is
$w\,du-z^5\,dy.$ Therefore,
\[0\to N_{L_5/X_{28}}\to
\mathcal O(3)^{\oplus2}\oplus\mathcal O(7)\xrightarrow{(-z^5,w,0)}
\mathcal O(28)\to0.\]
The $\mathcal O(7)$-summand is contained in the kernel, and
$\ker\bigl(\mathcal O(3)^{\oplus2}\to\mathcal O(28)\bigr)
\simeq\mathcal O(-22).$ Hence
\[N_{L_5/X_{28}}\simeq\mathcal O(7)\oplus\mathcal O(-22),
\qquad\det N_{L_5/X_{28}}\simeq\mathcal O(-15).\]
Since $\PP(5,25)\rightsquigarrow\PP(1,5),
\ \mathcal O_{\PP(5,25)}(-15)
=\mathcal O_{\PP(1,5)}(-3).$ Lemma~\ref{T^1} gives
\[\mathcal T^1_{\mathrm{QG}}|_{L_5}
\simeq\mathcal O_{\PP(1,5)}(-15)
\oplus\mathcal O_{\PP(1,5)}(-12)
\oplus\mathcal O_{\PP(1,5)}(-9)
\oplus\mathcal O_{\PP(1,5)}(-6).\]
Since $K_{\PP(1,5)}=\mathcal O_{\PP(1,5)}(-6),$ Using Serre duality $h^1(L_5,\mathcal T^1_{\mathrm{QG}}|_{L_5})=6$.
Because \(L_3\) and \(L_5\) are disjoint, we have 
$H^1(X_{28},\mathcal T^1_{\mathrm{QG}})
\cong\mathbb C^6.$ This does not prove that $X_{28}$ is not smoothable, but is hard to check whether $X_{28}$ is smoothable.
\end{Rmk}

\begin{Prop}\label{Y10}
The weighted projective space \(X=\PP(2,18,25,45)\) admits a \(\QQ\)-Gorenstein smoothing to $Y_{10}$ a Fano threefold with \(g=10\).
\end{Prop}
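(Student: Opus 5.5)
The plan is to imitate the proof of Proposition~\ref{Y6}: first partially smooth the worst singular curve by an explicit hypersurface family, and then finish with a local-to-global argument. Write $[x:y:z:t]$ for the coordinates, of degrees $(2,18,25,45)$. Then $S=90$ and $(-K_X)^3=90^3/(2\cdot18\cdot25\cdot45)=18=2g-2$ with $g=10$. The gcd graph is
\[2\xleftrightarrow{\ 2\ }18\xleftrightarrow{\ 9\ }45\xleftrightarrow{\ 5\ }25.\]
The complementary sums are $70$, $27$ and $20$, so the three coordinate curves have transverse types $A_1$, $A_8$ and $A_4$. Taking $\mu_9$-invariants along the $A_8$-curve, put $u=x^9$, $v=xz$, $w=z^9$. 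After dividing the degrees by $9$ this gives
\[X\simeq\{uw=v^9\}\subset\PP(2,5,2,3,25)_{y,t,u,v,w}.\]
I then consider the degree-$27$ family
\[\mathcal X=\{uw-v^9-s\,g_{27}(y,t,u,v)=0\}\subset \PP(2,5,2,3,25)\times\mathbb A^1_s.\]
The perturbation $g_{27}$ is chosen to contain monomials such as $t^5u$, $y^{12}v$ and $u^{12}v$, so that the fibre $X_{27}$ over $s\neq 0$ is quasismooth. One has $K_{X_{27}}=\mathcal O(27-37)=\mathcal O(-10)$, so the family is a $\QQ$-Gorenstein partial smoothing which removes the $A_8$-curve. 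The first thing to check is quasismoothness, by the usual case-by-case analysis on coordinate strata.

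Next I locate the surviving singular curves in $X_{27}$. These are the images of the $A_1$-curve $\{z=t=0\}$ and of the $A_4$-curve $\{x=y=0\}$, together with their endpoints. At each endpoint I write down the index-one cover and exhibit an equivariant smoothing of the form $uv=w^r+\tau(\cdots)$, as was done at $P_w$ and $P_y$ in Proposition~\ref{Y6}. Corollary~\ref{non-smoothing} serves as a test that no endpoint is obstructed. For the global step, $H^2(X_{27},T_{X_{27}})=0$ follows as before from the weighted Euler sequence, the normal sequence, and the Cohen--Macaulayness of the coordinate ring. For $H^1(X_{27},\mathcal T^1_{\mathrm{QG}})$ I compute the normal bundle of each remaining curve from the linear part of the equation, exactly as in Remark~\ref{P(3)}, and then apply Lemma~\ref{T^1}. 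The aim is to show that every summand has degree at least $-1$ on the underlying $\PP^1$ after rigidification, so that $H^1$ vanishes.

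This step is the one I expect to be the main obstacle, specifically along the $A_4$-curve through the weight-$25$ point $P_w$. This is precisely where the $Y_7$ candidate failed: there the normal bundle had a very negative summand, and $\det N$ turned out negative. If the naive choice of $g_{27}$ produces a negative determinant, I will first try to modify $g_{27}$ so that the $A_4$-curve is embedded with a different normal splitting. Failing that, I will instead partially smooth the $A_4$-curve first, using the $\mu_5$-invariants $x^5$, $xy$, $y^5$ in place of the $\mu_9$ construction, and run the same argument in that other order. Once the local smoothings are compatible and the obstruction group vanishes, they glue to a smoothing $X\rightsquigarrow X_{27}\rightsquigarrow Y$ with $Y$ smooth and $(-K_Y)^3=18$.

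To finish, I prove $\rho(Y)=1$ through Lemma~\ref{rho1}. By weighted Lefschetz, $H^2(X_{27},\QQ)\simeq\QQ$. A section of $\mathcal H^2(R\phi\QQ)$ vanishes along the $A_1$-curve because of the reflection monodromy around a simple zero of the smoothing section. It vanishes along the $A_4$-curve by Lemma~\ref{monodromy}, since the residual cyclic action at the endpoint acts as a $5$-cycle and so the monodromy group is transitive. Injectivity of the generization map at the points where the two curves meet then kills any section supported at those points. Since $(-K_Y)^3=18$ is not of the form $8d$, $27\cdot 2$ or $64$, the Fano index is $1$, and therefore $g=18/2+1=10$ and $Y\simeq Y_{10}$.
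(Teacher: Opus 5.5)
Your construction matches the paper's. You use the same model $X\simeq\{uw=v^9\}\subset\PP(2,2,3,5,25)$ and a degree-$27$ partial smoothing $X_{27}$, whose singular locus consists of two disjoint curves: $L_2=\{v=t=w=0\}$ of type $A_1$ and $L_5=\{u=v=y=0\}\simeq\PP(5,25)$ of type $A_4$. You then use $H^2(T)=0$, $H^1(\mathcal T^1_{\mathrm{QG}})=0$, vanishing cycles, and the index count.

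\textbf{The gap.} You leave the decisive step open, and the computation you propose for it gives the wrong sign. By adjunction (the formula the paper itself uses for $L_2$),
\[
\det N_{L_5/X_{27}}\simeq\omega_{L_5}\otimes\omega_{X_{27}}^{-1}|_{L_5}\simeq\mathcal O_{\PP(5,25)}(-30+10)=\mathcal O_{\PP(1,5)}(-4).
\]
The same follows in the style of Remark~\ref{P(3)}. Since there are no forms of degree $24$ in $t,w$, we have $F_v|_{L_5}=0$. The restrictions $F_u|_{L_5}$ and $F_y|_{L_5}$ are independent forms of degree $25$. Hence $N_{L_5/X_{27}}\simeq\mathcal O(3)\oplus\mathcal O(-23)$. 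Lemma~\ref{T^1} then gives
\[
\mathcal T^1_{\mathrm{QG}}|_{L_5}\simeq\mathcal O_{\PP(1,5)}(-20)\oplus\mathcal O_{\PP(1,5)}(-16)\oplus\mathcal O_{\PP(1,5)}(-12)\oplus\mathcal O_{\PP(1,5)}(-8),
\]
with $h^0=0$ and $h^1=9$. This is the $Y_7$ phenomenon, only worse.

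\textbf{Your fallbacks do not help.}
\begin{enumerate}
\item The degrees above are forced, so no choice of $g_{27}$ changes them.
\item In the other order you get $X\simeq\{UW=V^5\}\subset\PP(2,4,5,9,18)$ with $U=x^5$, $V=xy$, $W=y^5$. There, no degree-$20$ hypersurface is quasismooth along the $A_8$-curve $\PP(9,18)_{t,W}$. The only nonzero normal derivative is $F_U|=\alpha t^2+\beta W$, which has a zero. So you never reach an intermediate of the kind your argument treats.
\end{enumerate}

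\textbf{The paper's treatment of this step does not close the gap.} The paper gives $y_2=uv$ weighted degree $5$. This makes $s_3,\dots,s_0$ of degrees $10,15,20,25$, so that $\mathcal T^1_{\mathrm{QG}}|_{L_5}\simeq\mathcal O(2)\oplus\mathcal O(3)\oplus\mathcal O(4)\oplus\mathcal O(5)$. It then takes $s_0=\lambda x_3^5+\mu w$ (your $t=x_3$).

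That treats $u$ as a transverse coordinate along all of $L_5$, which fails at $P_w$. There you must eliminate $u$ and use $y$. Modulo $I_{L_5}^2$ one has $uw\equiv\gamma t^5y$, where $\gamma$ is the coefficient of $yt^5$. Consequently:
\begin{itemize}
\item the two local $A_4$-presentations differ by the factor $\gamma t^5/w$ in $y_2$, and by its fifth power in the $s_0$-coefficient;
\item $\lambda x_3^5+\mu w$ therefore acquires a pole of order $5$ at $P_w$ in the local trivialization there, instead of restricting to the local smoothing $y_0y_1=y_2^5+\tau$;
\item this is exactly the drop of $\det N$ from $\mathcal O_{\PP(1,5)}(1)$ to $\mathcal O_{\PP(1,5)}(-4)$.
\end{itemize}
As it stands, no global smoothing direction along $L_5$ has been produced, so some genuinely new input is needed.

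\textbf{Two smaller points.}
\begin{itemize}
\item Your sample terms $t^5u+y^{12}v+u^{12}v$ alone give $F_y|_{L_5}=0$ and $F_u|_{L_5}=w-st^5$. So $X_{27}$ is not quasismooth at the point $w=st^5$, and you need a term such as $yt^5$.
\item The surviving $A_1$- and $A_4$-curves are disjoint, so there are no meeting points to handle. The punctual vanishing-cycle check belongs at $P_w$.
\end{itemize}
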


\begin{proof}
Let \(X=\PP(2,18,25,45)\) with coordinates \(x_0,x_1,x_2,x_3\). Take invariants for the \(\mu_{9}\)-action on \(x_0,x_2\):
$u=x_0^9,\ v=x_0x_2,\ w=x_2^9.$
After dividing all degrees by $9$, we have $\deg(u,v,w,x_1,x_3)=(2,3,25,2,5).$
The invariant relation is $uw=v^9.$ Therefore
\[\PP(2,18,25,45)\simeq
\{uw=v^9\}\subset\PP(2,3,25,2,5).\]Consider the family
\[\mathcal X_t=
\left\{uw=v^9+tg_{27}(x_1,x_3)\right\}\subset\PP(2,3,25,2,5),\]
where $g_{27}(x_1,x_3)=\alpha x_1^{11}x_3+\beta x_1^6x_3^3+\gamma x_1x_3^5$ with general nonzero $\alpha,\beta,\gamma$. Each term has weighted degree $27$.
The family is flat and $\QQ$-Gorenstein, and
$\mathcal X_0\simeq\PP(2,18,25,45).$ For $t\neq0$,
\[F:=uw-v^9-tg_{27}(x_1,x_3)\]has derivatives
$F_u=w,\ F_w=u,\ F_v=-9v^8,$ together with $-tg_{x_1},-tg_{x_3}$. A general weighted binary form $g_{27}$ above has no multiple zero on $\PP(2,5)$, including its endpoints. 
Hence the affine cone is smooth away from its vertex, so the general $\mathcal X_t$ is quasismooth.

Denote a general nonzero fibre by
\[Z=X_{27}\subset\PP(2,2,3,5,25).\]
The remaining singular locus consists of two disjoint curves:
$L_2=\{v=x_3=w=0\}\simeq\PP(2,2)\simeq\PP^1$ 
with two end points $(P_u\in X_{27})\simeq\frac{1}{2}(1,0,1)$ and $(P_{x_1}\in X_{27})\simeq\frac{1}{2}(0,1,1)$, and
$L_5=\{u=v=x_1=0\}\simeq\PP(5,25)\simeq\PP(1,5)$
with two end points $(P_{x_3}\in X_{27})\simeq\frac{1}{5}(2,3,0)$ and $(P_w\in X_{27})\simeq\frac{1}{25}(3,2,5)$.
Their generic transverse types are $L_2:\ A_1$, and $L_5:\ A_4.$
Thus, the deformation changes the gcd graph $A_1$-$A_8$-$A_4$ to $A_1$ and $A_4$ without connecting edge.

Adjunction gives $-K_Z=\mathcal O_Z(10)$. On $L_2\simeq\PP(2,2)\simeq\PP^1,$ we have
$\mathcal O_Z(2)|_{L_2}\simeq\mathcal O_{\PP^1}(1),$ so
$-K_Z|_{L_2}\simeq\mathcal O_{\PP^1}(5)$. If $N_1,N_2$ are the two eigennormal bundles on the index-one cover, adjunction gives
\[N_1\otimes N_2\simeq K_{L_2}\otimes(-K_Z)|_{L_2}\simeq
\mathcal O_{\PP^1}(-2+5)=\mathcal O_{\PP^1}(3).\]
For an $A_1$-singularity, the smoothing parameter is a section of
$(N_1\otimes N_2)^{\otimes2}.$ Therefore
$\mathcal T^1_{\mathrm{QG}}|_{L_2}
\simeq\mathcal O_{\PP^1}(6).$ Choose a general section
$q_6\in H^0(\PP^1,\mathcal O(6))$ with six distinct zeros away from the endpoints. Locally the deformation is
$AB=C^2+\tau q_6.$ At each zero of $q_6$, $dq_6\neq0$, so the total general fiber remains smooth. Also, $H^1(\PP^1,\mathcal O(6))=0.$

Along $L_5$, the generic transverse singularity is $A_4$:
$y_0y_1=y_2^5.$ Here $y_2$ has weighted degree $5$, while the relation has degree $25$. Its miniversal form is
\[y_0y_1=y_2^5+s_3y_2^3+s_2y_2^2+s_1y_2+s_0,\]
where, on $L_5\simeq\PP(5,25)$,
$\deg(s_3,s_2,s_1,s_0)=(10,15,20,25).$ After dividing the curve weights by $5$, $L_5\simeq\PP(1,5),$ and therefore
$\mathcal T^1_{\mathrm{QG}}|_{L_5}
\simeq\mathcal O(2)\oplus\mathcal O(3)\oplus
\mathcal O(4)\oplus\mathcal O(5).$
It suffices to use only the constant parameter. Take
\[s_3=s_2=s_1=0,\qquad s_0=\lambda x_3^5+\mu w, \qquad \lambda\mu\ne0.\]
This is a section of $\mathcal O_{\PP(1,5)}(5)$. On the coarse $\PP^1$, $x_3^5$ and $w$ are homogeneous coordinates, so $s_0$ has one simple zero and is nonzero at both endpoints.
The local equation
\[y_0y_1=y_2^5+\tau(\lambda x_3^5+\mu w)\]is smooth for $\tau\neq0$: at the unique zero of $s_0$, its derivative along the base curve is nonzero.
Furthermore,
\[H^1\!\left(\PP(1,5),
\mathcal O(2)\oplus\mathcal O(3)\oplus
\mathcal O(4)\oplus\mathcal O(5)\right)=0.\]
At the weight-$25$ endpoint of $L_5$, the local singularity of $Z$ is
$\frac1{25}(3,2,5).$ Its canonical index is $5$, and its index-one cover is
$\frac15(3,2,0)\simeq\{y_0y_1=y_2^5\}\times\mathbb A^1.$ The deformation
$y_0y_1=y_2^5+\tau$ is smooth. The $\mu _5$-action has weights
$(3,2,1,1)$ on $(y_0, y_1, y_2,s)$. Its only fixed point is the origin, and the origin does not belong to the fiber $y_0y_1=y_2^5+\tau$ when $\tau\neq0$. Therefore the quotient general fiber is smooth.
Our choice $\mu w\neq0$ restricts precisely to this local smoothing.

Because $L_2\cap L_5=\varnothing$,
\[\mathcal T^1_{\mathrm{QG}}\simeq
i_{2*}\mathcal O_{\PP^1}(6)
\oplus i_{5*}\!\left(
\mathcal O(2)\oplus\mathcal O(3)
\oplus\mathcal O(4)\oplus\mathcal O(5)\right),\]
up to punctual terms at the coordinate points. Therefore, 
$H^1(Z,\mathcal T^1_{\mathrm{QG}})=0.$ Since $Z$ is a quasismooth weighted hypersurface, the weighted Euler and normal sequences give
$H^2(Z,T_Z)=0.$
The local-to-global deformation spectral sequence therefore globalizes the prescribed local smoothing directions. The only possible punctual obstruction is at the weight-$25$ point, and the index-one cover calculation above shows that the selected local deformation is unobstructed.
Consequently, $Z$ admits a projective $\QQ$-Gorenstein smoothing to a smooth Fano threefold $Y$. Adjunction gives
$-K_Z=\mathcal O_Z(10).$ Hence
$(-K_Z)^3=\frac{10^3\cdot27}{2\cdot2\cdot3\cdot5\cdot25}
=\frac{27000}{1500}=18.$ Therefore
$(-K_Y)^3=18.$

\n {\bf Claim.} \(\rho(Y)=1\).
\begin{proof}[Proof of the Claim]
Let $Z=X_{27}\subset \PP(2,2,3,5,25)$ be the partial smoothing constructed in the proof:
\[Z=\{uw=v^9+g_{27}(x_1,x_3)\}.\]
The remaining singular locus consists of two disjoint curves
$L_2\simeq\PP^1,\ L_5\simeq\PP(1,5)$,with transverse types $A_1$ and $A_4$. By Lemma~\ref{rho1}, it is enough to prove
$\mathbb H^2(Z,R\phi_\pi\QQ)=0.$
Because $L_2\cap L_5=\varnothing$, the contributions from $L_2$ and $L_5$ may be considered separately.

The smoothing along $L_2\simeq\PP^1$ is
$y_0y_1=y_2^2+\tau q_6,\ q_6\in H^0(\PP^1,\mathcal O(6)),$ where $q_6$ has six distinct simple zeros.
By Lemma~\ref{monodromy}, 
$H^0\!\left(L_2, \mathcal H^2(R\phi_\pi\QQ)\right)=0.$
The section $s_0$ has one simple zero, and the five roots of
$C^5+\tau s_0=0$ are cyclically permuted. Then by Lemma~\ref{monodromy},
$H^0\!\left(L_5,\mathcal H^2(R\phi_\pi\QQ)\right)=0.$

At the weight-$25$ endpoint, the singularity is
$\frac1{25}(3,2,5).$ Its index-one cover is
$\frac15(3,2,0)\simeq
\{y_0y_1=y_2^5\}\times\mathbb A^1.$ The chosen deformation lifts to
$y_0y_1=y_2^5+\tau.$  The $\mu_5$-action has weights
$(3,2,1,1)$ on $(y_0,y_1,y_2,s)$, and acts freely on the local general fiber. The Milnor fiber of the cover has
\[H^0(F,\QQ)=\QQ,\qquad
H^2(F,\QQ)\simeq\QQ^4,\]
and no other reduced rational cohomology. Hence
$\chi(F)=5.$ Since the $\mu_5$-action is free,
$\chi(F/\mu_5)=\frac{\chi(F)}5=1,$ and we have 
$H^*(F/\mu_5,\QQ)
\simeq H^*(F,\QQ)^{\mu_5}.$ The quotient is connected, so its $H^0$ already contributes $1$ to the Euler characteristic. It follows that
$H^2(F,\QQ)^{\mu_5}=0.$ Thus the weight-$25$ endpoint contributes no punctual degree-two vanishing cycle. Both curve contributions and the endpoint contribution vanish:
$H^0\!\left(Z,\mathcal H^2(R\phi_\pi\QQ)\right)=0.$ Therefore
$\mathbb H^2(Z,R\phi_\pi\QQ)=0.$ The nearby-cycle sequence now gives
$b_2(Y)=1, \ \rho(Y)=1.$
\end{proof}

Finally, $(-K_Y)^3=(-K_Z)^3=18$. If the Fano index were $r\ge2$, then
$(-K_Y)^3=r^3H^3$ would lead to a contradiction. Thus the index is $1$, and $g(Y)=\frac{(-K_Y)^3}{2}+1=10.$ Hence the smooth fibre is a prime Fano threefold $Y_{10}$ of genus $10$.
\end{proof}

\begin{Prop}\label{Y12}
The weighted projective space $X=\PP(2,9,22,33)$ admits a $\QQ$-Gorenstein smoothing to  $Y_{12}$ a Fano threefold with $g=12$.
\end{Prop}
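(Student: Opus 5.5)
We follow the two-step strategy of Propositions~\ref{Y6} and~\ref{Y10}. First we partially smooth the largest transversal singular curve by an explicit weighted hypersurface family. Then we smooth the remaining disjoint curves by local-to-global deformation theory, and finally control the Picard number through Lemma~\ref{rho1}. For $X=\PP(2,9,22,33)$ we have $S=66$, $-K_X=\mathcal O_X(66)$ and $(-K_X)^3=66^3/(2\cdot9\cdot22\cdot33)=22$. The gcd graph is
\[2\xleftrightarrow{\ 2\ }22\xleftrightarrow{\ 11\ }33\xleftrightarrow{\ 3\ }9,\]
with transverse types $A_1,A_{10},A_2$. All three are of $A$-type, since $2\mid 9+33$, $11\mid 2+9$ and $3\mid 2+22$.

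\textbf{Step 1: partial smoothing of the $A_{10}$-curve.} Write $\deg(x_0,x_1,x_2,x_3)=(2,9,22,33)$. We take $\mu_{11}$-invariants in the transverse coordinates $x_0,x_1$, setting
\[u=x_0^{11},\qquad v=x_0x_1,\qquad w=x_1^{11}.\]
After dividing all degrees by $11$, this gives
\[X\simeq\{uw=v^{11}\}\subset\PP(1,2,2,3,9)_{v,u,x_2,x_3,w}.\]
Next we consider the degree-$11$ family
\[uw=v^{11}+t\,x_2x_3(\alpha x_2^3+\beta x_3^2),\qquad \alpha\beta\neq0.\]
For $t\ne0$ we check quasismoothness directly from the partial derivatives, exactly as in Proposition~\ref{Y10}; the binary form has only simple zeros on $\PP(2,3)$, including its endpoints. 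The general fibre is $Z=X_{11}\subset\PP(1,2,2,3,9)$. By adjunction $-K_Z=\mathcal O_Z(6)$, so $(-K_Z)^3=6^3\cdot11/108=22$, and the family is a $\QQ$-Gorenstein partial smoothing. Two singular curves remain. The first is the $A_1$-curve $L_2=\{v=x_3=w=0\}\simeq\PP(2,2)\simeq\PP^1$. The second is the $A_2$-curve $L_3=\{v=u=x_2=0\}\simeq\PP(3,9)\simeq\PP(1,3)$. These two curves are disjoint.

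\textbf{Step 2: local smoothings and globalization.} We compute $\mathcal T^1_{\mathrm{QG}}$ on each curve from the eigen-normal bundles via Lemma~\ref{T^1}, using adjunction $N_1\otimes N_2\simeq K_L\otimes(-K_Z)|_L$ as in Proposition~\ref{Y10}. On $L_2$ we have $-K_Z|_{L_2}=\mathcal O_{\PP^1}(3)$, which gives $\mathcal T^1_{\mathrm{QG}}|_{L_2}\simeq\mathcal O_{\PP^1}(2)$. On $L_3$ we expect summands of nonnegative degree on $\PP(1,3)$. In both cases $H^1=0$. We then choose explicit sections with simple zeros:
\begin{itemize}
\item on $L_2$, a general quadratic form $q_2$;
\item on $L_3$, the constant-term smoothing $s_0=\lambda x_3^3+\mu w$, which is nonzero at both endpoints.
\end{itemize}
The weight-$9$ endpoint is $P_w\simeq\frac19(1,2,3)$, after eliminating $u$ using $\partial F/\partial u=w$. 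Its index-one cover is $\{AB=C^3\}\times\mathbb A^1$, and the deformation $AB=C^3+\tau$ has a free residual $\mu_3$-action on the general fibre, just as at $P_w$ in Proposition~\ref{Y6}. The endpoints of $L_2$ are of the form $\frac12(1,0,1)\simeq\mathbb A^1\times A_1$ and pose no problem. Since $Z$ is a quasismooth weighted hypersurface, the Euler and normal sequences give $H^2(Z,T_Z)=0$. Together with $H^1(Z,\mathcal T^1_{\mathrm{QG}})=0$, the local-to-global spectral sequence produces a global $\QQ$-Gorenstein smoothing $Z\rightsquigarrow Y$ with $(-K_Y)^3=22$.

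\textbf{Step 3: $\rho(Y)=1$ and identification.} Weighted Lefschetz gives $H^2(Z,\QQ)=\QQ$, so by Lemma~\ref{rho1} it suffices to show $H^0(Z,\mathcal H^2(R\phi_\pi\QQ))=0$. On $L_2$ the reflection monodromy around a simple zero of $q_2$ kills invariants. On $L_3$ the three roots of $C^3+\tau s_0$ are cyclically permuted around the zero of $s_0$, so Lemma~\ref{monodromy} applies. At $P_w$ we use the Euler characteristic argument of Proposition~\ref{Y10}: the cover Milnor fibre has $\chi=3$, and the free $\mu_3$-quotient has $\chi=1$, which forces $H^2(F,\QQ)^{\mu_3}=0$. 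Finally, $22=r^3H^3$ with $r\ge2$ is impossible, since $8\nmid22$ and $27>22$. Hence the Fano index is $1$, $g=22/2+1=12$, and $Y\simeq Y_{12}$.

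The main obstacle is Step 2: the precise degrees of $\mathcal T^1_{\mathrm{QG}}|_{L_3}$ on $\PP(1,3)$, and checking that the chosen curve smoothing restricts compatibly to the endpoint smoothing at $P_w$. I would verify both against the explicit local equation at $P_w$ first.
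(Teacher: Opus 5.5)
Your plan is the paper's proof almost line by line. It uses the same $\mu_{11}$-embedding $X\simeq\{uw=v^{11}\}\subset\PP(1,2,2,3,9)$, the same degree-$11$ family, the same residual curves $L_2,L_3$, and the same globalization and vanishing-cycle steps. The only differences are that you use $s_0$ alone on $L_3$ and an Euler-characteristic argument at $P_w$, where the paper uses $s_1=ay^2$, $s_0=by^3+cw$ and a $3$-cycle.

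The step you flagged is a genuine gap, and it does not come out as you expect. Apply your own recipe to $L_3=\{v=u=x_2=0\}\simeq\PP(3,9)_{x_3,w}$. Adjunction gives
$\det N_{L_3/Z}\simeq K_{\PP(1,3)}\otimes(-K_Z)|_{L_3}\simeq\mathcal O_{\PP(1,3)}(-4)\otimes\mathcal O_{\PP(1,3)}(2)$, which is negative. The kernel computation of Remark~\ref{P(3)} confirms this. Along $L_3$ the linear part of $F$ is $w\,du-\beta x_3^3\,dx_2$, so
\[0\to N_{L_3/Z}\to\mathcal O(1)\oplus\mathcal O(2)^{\oplus2}\xrightarrow{(0,\,w,\,-\beta x_3^3)}\mathcal O(11)\to0,\qquad N_{L_3/Z}\simeq\mathcal O(1)\oplus\mathcal O(-7).\]
Lemma~\ref{T^1} then gives $\mathcal T^1_{\mathrm{QG}}|_{L_3}\simeq\mathcal O_{\PP(1,3)}(-6)\oplus\mathcal O_{\PP(1,3)}(-4)$, with $H^0=0$ and $h^1=2$.

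The reason is that $(v,x_2)$ are transverse coordinates only away from $P_{x_3}$. At $P_{x_3}$ one has $\partial F/\partial u=w=0$ while $\partial F/\partial x_2=-\beta x_3^3\neq0$. So $x_2$ is eliminated, the transverse pair becomes $(v,u)$, and $uw\equiv\beta x_3^3x_2$ along $L_3$. Put $A=v^3,B=x_2^3,C=vx_2$ near $P_w$ and $A'=v^3,B'=u^3,C'=vu$ near $P_{x_3}$. On the overlap one finds $A'B'-C'^3=\Phi\,(AB-C^3)$ with $\Phi|_{L_3}=\beta^3t^2$, where $t=x_3^3/w$. Hence $s_0=s_0'/(\beta^3t^2)$, so the $s_0$-line has degree $-2$ on the coarse $\PP^1$. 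In particular $\lambda x_3^3+\mu w$ is not a section.

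Consequences:
\begin{itemize}
\item No first-order $\QQ$-Gorenstein deformation of $Z$ is nontrivial along $L_3$.
\item $H^1(Z,\mathcal T^1_{\mathrm{QG}})\neq0$.
\item The globalization in Step~2 and the $3$-cycle monodromy in Step~3 both presuppose the section $s_0$, so they have nothing to act on.
\end{itemize}
Only the $L_2$ part survives: there the same kernel computation gives $N\simeq\mathcal O(1)\oplus\mathcal O(1)$, and your $\mathcal O_{\PP^1}(2)$ is right.

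Following the paper will not repair this. It obtains $\mathcal O_{\PP(1,3)}(2)\oplus\mathcal O_{\PP(1,3)}(3)$ by giving $s_1,s_0$ the degrees $6,9$ read off from $(v,x_2)$, which is exactly the naive count that fails at $P_{x_3}$. The $L_5$ step of Proposition~\ref{Y10}, which you cite as a model, has the same defect.

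A proof along these lines needs a genuinely different treatment of the $A_2$-curve. For example, you could smooth it on a model where its normal bundle is still positive: on $X$ it is $\mathcal O(2)\oplus\mathcal O(22)$ on $\PP(9,33)$. The cost is controlling the Gorenstein point $\frac1{33}(2,9,22)$ where that curve meets the $A_{10}$-curve.
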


\begin{proof}
Let $X=\PP(2,9,22,33)$ with coordinates $x_0,x_1,x_2,x_3$. Take invariants for the $\mu_{11}$-action on $x_0,x_1$:
$u=x_0^{11},\ v=x_0x_1,\ w=x_1^{11},\ x=x_2,\ y=x_3.$
After dividing all degrees by $11$,
$\deg(v,u,x,y,w)=(1,2,2,3,9),$  and
$uw=v^{11}.$ Hence
\[X\simeq\{uw=v^{11}\}\subset\PP(1,2,2,3,9).\]
Now consider
\[\mathcal X=\left\{uw=v^{11}+t(x^4y+xy^3)\right\}
\subset\PP(1,2,2,3,9) \times \mathbb{A}^1_t.\]
It is a $\QQ$-Gorenstein flat family, and $\mathcal X_0\simeq\PP(2,9,22,33)$.

The deformation term
\[x^4y+xy^3\in H^0\!\left(\PP(2,3),\mathcal O(11)\right)\]
is exactly the $s_0$-parameter for the $A_{10}$-curve $C_{23}$.
For $t\neq0$, put
\[F=uw-v^{11}-t(x^4y+xy^3).\]
Then $F_u=w,\ F_w=u,\ F_v=-11v^{10}, \ F_x=-ty(4x^3+y^2), \text{and} \ F_y=-tx(x^3+3y^2).$
If all partial derivatives vanish, then
$u=w=v=0$ and $y(4x^3+y^2)=0,\ x(x^3+3y^2)=0$. These equations imply $x=y=0$. Thus $\mathcal X_t$ is quasismooth.

Its singular locus consists of two disjoint curves:
$L_2=\{v=y=w=0\}\simeq\PP(2,2),$ with generic transverse type $A_1$, and $L_3=\{v=u=x=0\}\simeq\PP(3,9),$ with generic transverse type $A_2$. Consequently, the original gcd graph 
$A_1-A_{10}-A_2$ has been changed into two vertices $A_1$ and $A_2$ without connecting edge. 

We let
\[X_{11}=\{uw=v^{11}+x^4y+xy^3\}\subset \PP(1,2,2,3,9),\]
where
$\deg(v,u,x,y,w)=(1,2,2,3,9)$. $L_2$ has two end points $(P_u\in X_{11})\simeq\frac{1}{2}(1,0,1)$ and $(P_x\in X_{11})\simeq\frac{1}{2}(1,0,1)$. $L_3$ has wo end points $(P_y\in X_{11})\simeq\frac{1}{3}(1,2,0)$ and $(P_w\in X_{11})\simeq\frac{1}{9}(1,2,3)$. Now we give a $\QQ$-Gorenstein smoothing of $X_{11}$ to a smooth Fano threefold of genus $12$.

By an explicit computation, $\mathcal T^1_{\mathrm{QG}}|_{L_2}
\simeq\mathcal O_{\PP^1}(2).$ Hence an $A_1$-smoothing is determined by a quadratic $q_2(u,x)\in H^0(\PP^1,\mathcal O(2)).$ Choose $q_2$ with two distinct zeros, neither at a coordinate endpoint. Locally the deformation has the form
$y_0y_1=y_2^2+\tau q_2.$ At a zero of $q_2$, the derivative $dq_2$ is nonzero, so the total general fiber is smooth.
Moreover,
\[H^1\bigl(L_2,\mathcal T^1_{\mathrm{QG}}|_{L_2}\bigr)
=H^1(\PP^1,\mathcal O(2))=0.\]

Along $L_3$, the generic transverse singularity is $A_2$. In miniversal form it is
\[y_0y_1=y_2^3+s_1y_2+s_0.\]Here $y_2$ has weighted degree $3$, while the relation has degree $9$. Therefore
\[s_1\in H^0\!\left(\PP(3,9),\mathcal O(6)\right),
\qquad s_0\in H^0\!\left(\PP(3,9),\mathcal O(9)\right).\]
Since $L_3\simeq\PP(1,3),$ $\mathcal T^1_{\mathrm{QG}}|_{L_3}
\simeq \mathcal O_{\PP(1,3)}(2)
\oplus\mathcal O_{\PP(1,3)}(3).$
In particular,
$H^1\bigl(L_3,\mathcal T^1_{\mathrm{QG}}|_{L_3}\bigr)=0.$ Take
\[s_1=a\,y^2,\qquad s_0=b\,y^3+c\,w\]with $abc\ne0$. The discriminant of $C^3+s_1C+s_0$ is 
\[\Delta=-4s_1^3-27s_0^2=-4a^3y^6-27(by^3+cw)^2.\]
Putting $Y=y^3,\ W=w,$ this becomes a quadratic form
$\Delta(Y,W)=-4a^3Y^2-27(bY+cW)^2.$ For general $a,b,c$, it has two distinct zeros on the coarse $\PP^1$, and
\[\Delta(1,0)=-4a^3-27b^2\ne0,\qquad
\Delta(0,1)=-27c^2\ne0.\]
Thus the discriminant avoids both coordinate endpoints. At its two zeros, $d\Delta\ne0$, so the total threefold remains smooth.

There is one non-Gorenstein singular point $P_w$ at $L_3$, which is $\frac1{9}(1,2,3)$. Its index-one cover is $\frac13(1,2,0)\simeq \{y_0y_1=y_2^3\}\times\mathbb A^1.$ The deformation $y_0y_1=y_2^3+\tau$ is smooth for $\tau\ne0$. The $\mu _3$-action has weights
$(1,2,1,1)$ on $(y_0,y_1,y_2,s)$; its only fixed point is the origin, which does not lie on $y_0y_1=y_2^3+\tau$. Hence the quotient general fiber is smooth. Our choice
\[s_0=b\,y^3+c\,w,\qquad c\ne0,\]restricts precisely to this local smoothing. Thus all local smoothing directions, including the codimension-three endpoint, are compatible.

Since \(L_2\cap L_3=\varnothing\),
$\mathcal T^1_{\mathrm{QG}}
\simeq i_{2*}\mathcal O_{\PP^1}(2)
\oplus i_{3*}\left(
\mathcal O_{\PP(1,3)}(2)
\oplus\mathcal O_{\PP(1,3)}(3)\right).$
Therefore $H^1(X_{11},\mathcal T^1_{\mathrm{QG}})=0$.
It remains to check $H^2(X_{11},T_{X_{11}})=0$. The weighted Euler sequence restricted to $X_{11}$ is
\[0\to\mathcal O_{X_{11}}
\to\mathcal O_{X_{11}}(1)\oplus\mathcal O_{X_{11}}(2)^{\oplus2}
\oplus\mathcal O_{X_{11}}(3)\oplus\mathcal O_{X_{11}}(9)
\to T_{\PP}|_{X_{11}}\to0.\]Since $X_{11}$ is a quasismooth weighted hypersurface, it is arithmetically Cohen–Macaulay, so the required intermediate cohomology of these line bundles vanishes. The normal sequence is
\[0\to T_{X_{11}}\to T_{\PP}|_{X_{11}}\to\mathcal O_{X_{11}}(11)\to0.
\] It follows that
$H^2(X_{11},T_{X_{11}})=0.$
This shows that the chosen sections on $L_2$ and $L_3$ globalize and extend to an actual formal $\QQ$-Gorenstein smoothing. 

Adjunction gives $-K_X=\mathcal O_X(6)$, and $(-K_X)^3=\frac{6^3\cdot11}{1\cdot2\cdot2\cdot3\cdot9}=22.$
The anticanonical divisor remains ample in the smoothing, so the general fiber $Y$ is a smooth Fano threefold with $(-K_Y)^3=22.$

\medskip
\n {\bf Claim.} $\rho(Y)=1$.
\begin{proof}[Proof of the Claim]
By Lemma~\ref{rho1}, it is enough to prove $\mathbb H^2(X_{11},R\phi_\pi\QQ)=0$.
The complex $R\phi_\pi\QQ$ is supported on the singular locus of $X_{11}$, $L_2$ and $L_3$, where the generic transverse singularities are $A_1$ and $A_2$, respectively.
Since $\mathbb H^2(X_{11},R\phi_\pi\QQ)
\simeq H^0\!\left(X_{11},\mathcal H^2(R\phi_\pi\QQ)\right)$, we must show that there is no global monodromy invariant section of the $A_1$ or $A_2$ vanishing-cycle local systems.

Along $L_2\simeq\PP^1\simeq\PP(1,1)$, the smoothing has local form
\[y_0y_1=y_2^2+\tau q_2,
\qquad q_2\in H^0(\PP^1,\mathcal O(2)),\]
where $q_2$ has two simple zeros. By Lemma~\ref{monodromy},
$H^0(L_2,\mathcal H^2(R\phi_\pi\QQ))=0.$

Along $L_3\simeq\PP(1,3),$ the smoothing is
\[y_0y_1=y_2^3+s_1y_2+s_0,\qquad
s_1=ay^2,\quad s_0=by^3+cw.\]
On the chart $y\neq0$, put
$\lambda=\frac{w}{y^3}.$ After rescaling $y_2$, the relevant cubic is
\[p_\lambda(y_2)=y_2^3+ay_2+b+c\lambda.\] For general $a,b,c$, the monodromy group is the full symmetric group $S_3$. Then by Lemma~\ref{monodromy},
$H^0(L_3,\mathcal H^2(R\phi_\pi\QQ))=0.$

At the non-Gorenstein endpoint of $L_3$, the singularity is $\frac1{9}(1,2,3).$ Its index-one cover is $\frac13(1,2,0)\simeq\{y_0y_y1=y_2^3\}\times\mathbb A^1.$ The $\mu_3$-action acts on the $A_2$ vanishing lattice by a $3$-cycle. Thus $A_{2,\QQ}^{\mu_3}=0$ by Lemma~\ref{monodromy}. So the endpoint does not create an additional punctual degree-two invariant.
Combining the two disjoint curves and the endpoint calculation gives
$H^0\!\left(X_{11},\mathcal H^2(R\phi_\pi\QQ)\right)=0,$ hence
$\mathbb H^2(X_{11},R\phi_\pi\QQ)=0.$
\end{proof}

The index cannot be at least $2$, since that would make $(-K_Y)^3$ divisible by $2^3=8$, whereas $(-K_Y)^3=22.$ Thus $Y$ has index $1$, and $g(Y)=\frac{(-K_Y)^3}{2}+1=12.$
Therefore $X_{11}\subset\PP(1,2,2,3,9)$
admits a $\QQ$-Gorenstein smoothing to $Y_{12}$.
Combining this with the previous partial smoothing gives
$\PP(2,9,22,33)\rightsquigarrow X_{11}\rightsquigarrow Y_{12}.$
\end{proof}



\begin{thebibliography}{12}



\bibitem{BR86}
M.~Beltrametti, L.~Robbiano,
\emph{Introduction to the theory of weighted projective
spaces}, Exposition. Math., 4(2) (1986) 111--162.

\bibitem{BRS}
G.~D.~Brown, M.~Reid, J.~Stevens, 
\emph{Tutorial on Tom and Jerry: the two smoothings of the anticanonical cone over \(\PP(1,2,3)\)}, 
EMS Surveys in Mathematical Sciences 8,  no. 1-2  (2021), 25--38.

\bibitem{CL26}
J.~Chen, Y.~Lee,
\emph{Weighted projective spaces admitting \(\QQ\)-Gorenstein smoothings to \(\PP^3\)}, Preprint.

\bibitem{Del73}
P.~Deligne,
\emph{Le formalisme des cycles évanescents, Exposé XIII, and La formule de Picard–Lefschetz, Exposé XV}, in Groupes de monodromie en géométrie algébrique II, LNM 340, Springer, 1973.


\bibitem{DeV22}
K.~DeVleming,
\textit{Moduli of surfaces in $\PP^3$}, 
Compositio Math. \textbf{158} (2022), 1329-1374.


\bibitem{DLT}
K.~DeVleming, J.~Li, and S.~Torres,
\textit{Weighted projective degenerations of $\PP^n$},
arXiv:2606.21660.

\bibitem{Dimca}
A.~ Dimca,
\emph{Sheaves in Topology}, 
Universitext, Springer, 2004


\bibitem{Dol82}
I.~Dolgachev,
\emph{Weighted projective varieties}. In Group actions and vector fields (Vancouver, B.C.,
1981), volume 956 of Lecture Notes in Math., pages 34--71. Springer, Berlin, 1982.

\bibitem{Gal}
S.~Galkin,
\emph{Small toric degenerations of Fano threefolds}, arXiv:1809.02705.

\bibitem{Hac04}
P.~Hacking, 
\emph{Compact moduli of plane curves}, Duke Math. J. 124 (2004), no. 2, 213--257.

\bibitem{HP10}
P.~Hacking, Y.~Prokhorov,
\emph{Smoothable del Pezzo surfaces with quotient singularities},
Compos. Math., 146(1) (2010), 169--192. 

\bibitem{IF00}
A.R.~Iano-Fletcher,
\emph{Working with weighted complete intersections}, 
In Explicit birational geometry of 3-folds, volume 281 of London Math. Soc. Lecture Note Ser., 101--173. Cambridge Univ.
Press, Cambridge, 2000.

\bibitem{ILP}
N.~O~Ilten, J.~Lewis, V.~Przyjalkowski, 
\emph{Toric degenerations of Fano threefolds giving weak Landau-Ginzburg models},
 J. Algebra 374 (2013), 104--121.

\bibitem{Max20}
L.~G.~Maxim, \emph{Notes on vanishing cycles and applications}, J. Aust. Math. Soc. 109  no.3 (2020), 371--415.

\bibitem{AGV}
A.~N.~Parshin, I.~R.~Shafarevich,
\emph{Algebraic Geometry. V. Fano varieties},  A translation of Algebraic geometry. 5 (Russian), Ross. Akad. Nauk, Vseross. Inst. Nauchn. i Tekhn. Inform., Moscow. Translation edited by A. N. Parshin and I. R. Shafarevich. Encyclopaedia of Mathematical Sciences, 47. Springer-Verlag, Berlin, 1999.

\bibitem{Sch}
M.~Schlessinger,
\emph{Rigidity of quotient singularities}, Invent. Math., 14 (1971) 17--26. 






\end{thebibliography}
\end{document}